\theoremstyle{plain}
\newtheorem{theorem}{Theorem}[section]
\newtheorem*{thrm*}{Theorem}
\newtheorem{lemma}[theorem]{Lemma}
\newtheorem{proposition}[theorem]{Proposition}
\newtheorem{corollary}[theorem]{Corollary}
\newtheorem{definition}[theorem]{Definition}
\newtheorem{remark}[theorem]{Remark}
\numberwithin{equation}{section}
\def \a {{\alpha}}
\def \G {{\Gamma}}
\def \s {{\sigma}}
\def \R {{\mathbb {R}}}
\def \H {{\mathbb {H}}}
\def \N {{\mathbb {N}}}
\def \e {{\varepsilon}}
\def \r {{\varrho}}
\def \u {{\bar{u}}}
\def \x {{\xi}}
\def \O {{\Omega}}
\def \phi {{\varphi}}
\def \tilde {\widetilde}
\def\p{\partial}
\begin{document}
\title[minimal intrinsic graphs in Heisenberg groups]
 {Smoothness  of  Lipschitz  minimal intrinsic graphs     \\
   in Heisenberg groups $\H^n$, $n>1$}

\author{Luca Capogna}\address{Department of Mathematics,
University of Arkansas, Fayetteville, AR 72701}\email{lcapogna@uark.edu}
\author{Giovanna Citti}\address{Dipartimento di Matematica, Piazza Porta S. Donato 5,
40126 Bologna, Italy}\email{citti@dm.unibo.it}
\author{Maria Manfredini}\address{Dipartimento di Matematica, Piazza Porta S. Donato 5,
40126 Bologna, Italy}\email{manfredi@dm.unibo.it}
\keywords{regularity of solutions of PDE, minimal surfaces, sub-Riemannian geometry, Heisenberg group\\
The authors are partially funded by NSF Career grant DMS-0124318
(LC) and by INDAM
%, and by EU grant NEST-028766
(GC) and (MM)}
\subjclass{35H20, 53A10, 53C17}

\begin{abstract} We prove that Lipschitz  intrinsic graphs in the Heisenberg groups $\H^n$, with $n>1$,
which are  vanishing viscosity solutions of the minimal surface equation are smooth.
%As a consequence we obtain that every Lipschitz  minimal surface in  $\H^n$, with $n>1$,
%is smooth outside its characteristic locus.
\end{abstract}
\thanks{ }

\maketitle

%\tableofcontents

\section{Introduction}
The Heisenberg group is a Lie group with Lie algebra $\R^{2n+1}$ endowed  with a stratification
 $V_1 \oplus V_2,$ where $V_1$ has dimension $2n$, and
$V_2=[V_1, V_1]$ has dimension $1$. Since we are interested in
non-characteristic graphs, it is convenient that we use canonical coordinates of the second kind (the so called {\it polarized coordinates } \cite{cdpt}) and  denote $(s, x)$ the elements of the group, where
 $x=(x_1,...,x_{2n})$. Accordingly we will choose a basis of
the Horizontal tangent space $V_1$ as follows:
\begin {equation}\label{defcampi}
\begin{split}
&X_s = \p_s,  X_i=\p_{i}, \text{ for }i=1,...,n-1, \\
&X_{i}=\p_{ i}-x_{i-n+1}\p_{2n}, \,\,\text{ for
}i=n,...,2n-1.
\end{split}
\end {equation}
 This set of vectors can be completed to be a   basis of the tangent space by 
 adding the
vector
$$\p_{2n}\in V_2.$$ 

The notion of intrinsic regular surface has been studied in
\cite{FSSC1},  \cite{CM1}. Such a surface is  the graph of a function
$u:\R^{2n}\rightarrow \R$, and can be represented as
$$M=\{(s, x): \  s= u(x)\}.$$
Note that $C^1$  intrinsic graphs are always non-characteristic\footnote{ i.e. $T_pM\not = span\{X_s,X_1,...,X_{2n-1}\}(p)$, for all $p\in M$}.
According to a version of the implicit function theorem (\cite{FSSC1} and  \cite{CM1}), any
level surface $\{f(s,x)=c\}\subset \H^n$ of  functions $f:\H^n\to\R$ with continuous derivatives along the directions (\ref{defcampi}), can locally (near non-characteristic points) be expressed
as  an intrinsic graph of a function $u:\Omega\to \R$, $\Omega\subset \R^{2n}$. Moreover the
$C^1_H$ smoothness of $f$ implies that the  function $u$ is regular with respect to the
projection on its domain of the vector fields in (\ref{defcampi})
(see \cite{CM1} and \cite{ASV}).
 Since $X_s$ has null projection
of the domain of $u$, the regularity of this function will be
described in terms of the vector fields:
\begin {equation}\label{nonlincampi}  X_{i, u}=X_{i}\text{ for } i\leq 2n-2,
  X_{2n-1, u}=\p_{2n-1}+ u(x)\p_{2n},\end{equation} In
particular $X_{2n-1, u}$ is a non linear vector field, since it
depends on $u$. Note that the
 vector fields $X_{1, u}, ...,X_{2n-1, u},$
satisfy H\"ormander's finite rank condition in $\R^{2n}$. Consequently they give rise
to a control distance $d_u$, whose metric balls $B_u(x,r)$ have volume comparable
to $r^Q$, with $Q=2n+1$ the homogenous dimension of the space $(\R^{2n},d_u)$.

\bigskip
The notion of mean curvature has been recently introduced  as the
first variation\footnote{For variations which do not move the characteristic set} of the area functional. Several first variation formula have been independently established in receent years, see
for instance   \cite{dgn:minimal}, \cite{chy}, \cite{chmy:minimal}, 
 \cite{BC}, \cite{RR}, \cite{RR2}, \cite{pau:cmc-carnot},
 \cite{montefalcone}, \cite{selby},  \cite{Sherbakova}.
 For an introduction to the sub-Riemannian 
geometry  of the Heisenberg group  and a more detailed list of references see \cite{cdpt}. 
 The prescribed  mean curvature   equation for
intrinsic graphs (over $\Omega  \subset \R^{2n}$) in the Heisenberg groups of dimension $n>1$  has
the following expression
% in the divergence level set formulation
\begin{equation}\label{MAINgeq2} Lu =  \sum_{i=1}^{2n-1}  X_{i,  u}\Bigg(\frac{X_{i,  u} u}{\sqrt{1 +
|\nabla_{ u}u|^2}}\Bigg)=f, \text{ for }x\in \Omega \subset \R^{2n}.
\end{equation}
where
$$\nabla_u = (X_{1, u}, \ldots, X_{2n-1, u}).$$ If $u\in C^2(\Omega)$ is
a solution of \eqref{MAINgeq2} for $f=0$ then its graph is a critical point 
of the perimeter and consequently it is called a minimal intrinsic graph.

Properties of regular minimal surfaces have been investigated in
 \cite{gn:isoperimetric}, \cite{pau:minimal},
\cite{chmy:minimal}, \cite{chy}, \cite{gp:bernstein}, \cite{dgn2},
\cite{bascv} and \cite{ni:cmc}. 

Since minimal surfaces arise as critical points of the
perimenter functional, the variational formulation naturally provides 
several notions of   non regular
solutions (see for instance  \cite{gn:isoperimetric}, \cite{pau:minimal} and  \cite{chy}). Indeed existence of  BV minimizers  of the perimeter  is proved in 
 \cite{gn:isoperimetric}, \cite{pau:minimal} using direct methods of the calculus of variations, 
% while existence of Lipschitz minimizers  is proved in  \cite{chy}
% using a method of Riemannian (equivalently elliptic or viscosity) approximation.
More recently,  existence of  Lipschitz continuous vanishing viscosity solutions has  been  studied in  
\cite{chy}. Such solutions arise as  the sub-Riemannian mean curvature
equation is approximated by Riemannian problems which express the
mean curvature in an approximating Riemannian metrics (see
\cite{pau:minimal} and \cite{chy} for the relation between Riemannian and
sub-Riemannian curvature). The Riemannian approximation of \eqref{MAINgeq2}  is
\begin{equation}\label{MAINgeq2e} L_\e u = \sum_{i=1}^{2n}
X^\e_{i,  u}\Bigg(\frac{X^\e_{i, u}u}{\sqrt{1 + |\nabla^\e_{
u}u|^2}}\Bigg)=f, \text{ for }x\in \Omega  \subset \R^{2n}.
\end{equation}
where \begin{equation}\label{ecampi}X^\e_{i, u} = X_{i, u}
\,\text{ for } \,i\leq 2n-1, \quad X^\e _{2n ,u} = \e\p_{2n} \quad
\text{and} \quad \nabla^\e_u = (X^\e_{1, u}, \ldots, X^\e_{2n,
u}).\end{equation}

\begin{definition}\label{vvs}
Letting  $C^1_E$ denote the standard Euclidean $C^1$ norm, we will say
that an Euclidean  Lipschitz continuous function $u$ is a
{\it vanishing viscosity solution } of \eqref{MAINgeq2} in an open set
$\Omega$, if there exists a sequence $u_\e$ of
smooth
% $C^{2,\alpha}$
 solutions of
\eqref{MAINgeq2e} in $\Omega$ such that for every compact set
$K\subset \Omega$
\begin{itemize}
  \item $||u_{\e}||_{C^1_E(K)} \leq C$ for every $\e$;
  \item $u_\e\rightarrow u$ as $\e\rightarrow 0$ pointwise  a.e.
  in $\Omega$.
\end{itemize}
\end{definition}

As mentioned above, existence of this type of viscosity solutions in the case of $t-$graphs, i.e.
graphs of the form $x_{2n}=g(s,x_1,...,x_{2n-1})$, has been proved in
\cite[Theorem A and Theorem 4.5]{chy}. For such graph the corresponding PDE is
more degenerate than \eqref{MAINgeq2e} as characteristic points are allowed
(indeed, much of the analysis in \cite{chy} and \cite{chmy:minimal} is focused
on the study of solutions near such points).  In the same paper the authors prove that such solutions are
minimizers of the perimeter and address questions of uniqueness and comparison
theorems as well.
%
%  In Section ??? of the present paper we show existence of vanishing viscosity
% solutions in the intrinsic graph form. We also prove that such solutions are
% locally perimeter minimizers.
%
The problem of regularity  of minimal surfaces is still largely  open. In this paper
we address the issue of  regularity away from characteristic points. Our
goal is to prove the following
\begin{theorem}\label{maintheorem}
The Lipschitz continuous  vanishing viscosity solutions of \eqref{MAINgeq2} with
zero right-hand-side $f=0$
are smooth functions.
\end{theorem}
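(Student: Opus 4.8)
The plan is to establish regularity estimates for the smooth approximants $u_\e$ of \eqref{MAINgeq2e} that are \emph{uniform} in $\e$, to pass to the limit so as to identify $u$ as a sufficiently regular weak solution of the sub-Riemannian equation \eqref{MAINgeq2} with $f=0$, and then to bootstrap directly on that equation up to $C^\infty$. Fix $\Omega'\Subset\Omega$. From $\|u_\e\|_{C^1_E(\Omega')}\le C$ and Arzel\`a--Ascoli one may assume, along a subsequence, that $u_\e\to u$ uniformly on $\Omega'$, whence $u$ is Euclidean Lipschitz there; the denominators $R_\e:=\sqrt{1+|\nabla^\e_{u_\e}u_\e|^2}$ obey $1\le R_\e\le\Lambda(C,n)$, so \eqref{MAINgeq2e}, once expanded, is a divergence--form equation in the fields \eqref{ecampi} with coefficient matrix $a^\e_{ij}=R_\e^{-1}\delta_{ij}-R_\e^{-3}(X^\e_{i,u_\e}u_\e)(X^\e_{j,u_\e}u_\e)$ satisfying $\Lambda^{-3}I\le(a^\e_{ij})\le I$. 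Since $\|u_\e\|_{C^1_E}$ is uniformly bounded, the families $\{X^\e_{i,u_\e}\}_i$ and $\{X_{i,u_\e}\}_i$ are uniformly H\"ormander --- uniform doubling and Poincar\'e inequalities for the control distances $d_{u_\e}$ --- and $d_{u_\e}\to d_u$ locally uniformly.

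The core step is a uniform Caccioppoli scheme. Differentiating \eqref{MAINgeq2e} along $\p_k$, the function $v^\e_k:=\p_k u_\e$ solves a linear subelliptic equation
\begin{equation*}
\sum_{i,j}X^\e_{i,u_\e}\big(a^\e_{ij}\,X^\e_{j,u_\e}v^\e_k\big)=\sum_i X^\e_{i,u_\e}F^\e_{k,i}+G^\e_k ,
\end{equation*}
where the $F^\e_{k,i}$ are bounded in terms of $\|u_\e\|_{C^1_E}$, while the commutator term $G^\e_k$ carries \emph{vertical} second derivatives $\p_{2n}(\nabla^\e u_\e)$, generated by $[\p_k,X^\e_{2n-1,u_\e}]=(\p_k u_\e)\p_{2n}$ and $[\p_k,X^\e_{n-1+k,u_\e}]=-\p_{2n}$. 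The hypothesis $n>1$ enters decisively here: the vertical direction is a commutator of the $u$-independent fields, $\p_{2n}=-[X_{j,u},X_{n-1+j,u}]$ for $1\le j\le n-1$, so in $G^\e_k$ every vertical derivative of a horizontal quantity may be exchanged for two further horizontal derivatives. Testing the linearized equation against $\phi^2 v^\e_k$, integrating by parts, and using this identity converts the vertical contributions into horizontal ones that are absorbed into the uniformly elliptic energy $\int a^\e_{ij}\,X^\e_{i,u_\e}v^\e_k\,X^\e_{j,u_\e}v^\e_k\,\phi^2$ by Young's inequality and the bound $\Lambda^{-3}$. Iterating over all horizontal directions and all orders produces bounds, uniform in $\e$ on compact subsets, for every iterated horizontal derivative $X^\e_{i_1,u_\e}\cdots X^\e_{i_m,u_\e}u_\e$ in $L^2$; combined with the subelliptic Sobolev embedding and De Giorgi--Nash--Moser for the linearized equation, $\nabla^\e u_\e$ is bounded in $C^{0,\a}_{d_{u_\e}}$ uniformly in $\e$. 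Letting $\e\to0$ and using $d_{u_\e}\to d_u$, one obtains that $u$ is a weak solution of \eqref{MAINgeq2} with $f=0$ and that $\nabla_u u\in C^{0,\a}_{d_u}$, locally.

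It then remains to bootstrap the single equation \eqref{MAINgeq2}. Differentiating it along the $u$-independent horizontal fields $X_{1,u},\dots,X_{2n-2,u}$ (present only because $n>1$) and along $X_{2n-1,u}$ yields, for each component of $\nabla_u u$, a divergence--form linear subelliptic equation whose coefficients inherit the regularity of $\nabla_u u$ known at the current stage; subelliptic Schauder estimates upgrade $\nabla_u u$ by one derivative. The $u$-independent commutators $[X_{j,u},X_{n-1+j,u}]=-\p_{2n}$ transfer the gain to $\p_{2n}$, and, since $\p_{2n-1}=X_{2n-1,u}-u\,\p_{2n}$ and $\p_i=X_{i,u}+x_{i-n+1}\p_{2n}$ for $n\le i\le 2n-2$, the control of all iterated horizontal derivatives of $u$ translates into control of all Euclidean derivatives of $u$; induction on the order gives $u\in C^\infty$. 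The delicate point throughout is that $X_{2n-1,u}$ and the coefficients depend on the unknown $u$, so every gain in regularity of $u$ must be fed back into the fields and into the H\"older/Schauder spaces (defined through $d_u$, hence themselves dependent on $u$); it is precisely the presence of at least one genuinely $u$-independent horizontal direction --- i.e. $n>1$ --- that allows the induction to both start and close.

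I expect the principal obstacle to be the uniform-in-$\e$ Caccioppoli estimate of the core step. The approximating structure weights $\p_{2n}$ by $\e$, so purely vertical higher derivatives of $u_\e$ blow up like $\e^{-2}$; the estimates must therefore be confined to horizontal derivatives and must exploit the identity $\p_{2n}=-[X_{j,u},X_{n-1+j,u}]$ with $u$-independent fields to keep the successively differentiated equations within a closed family of uniformly subelliptic problems --- and carrying out this bookkeeping rigorously, together with the matching subelliptic Schauder theory for operators whose vector fields depend on the solution, is where the real work lies.
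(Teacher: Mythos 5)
Your two-step architecture (uniform-in-$\e$ Caccioppoli/Moser estimates giving $C^{1,\a}_u$, then a Schauder-type bootstrap to $C^\infty$) is the same as the paper's, but both steps as you describe them contain genuine gaps, and in both cases the paper's actual mechanism is different from the one you propose. In the Caccioppoli step, your plan is to trade each vertical derivative in the commutator term for two horizontal derivatives via $\p_{2n}=-[X_{j},X_{n-1+j}]$ and then ``absorb'' by Young's inequality. After integrating by parts, the resulting terms are products of \emph{two} top-order quantities (a second horizontal derivative of $u_\e$ coming from $X_j$ applied to the flux, against $X_{n-1+j}$ of the test pair), with no smallness factor, so the absorption into the elliptic energy does not close by itself. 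The paper instead derives a \emph{separate} equation for $\omega=\p_{2n}u$ (written in terms of the adjoint fields, Lemma \ref{equ2n}), proves an independent Caccioppoli inequality bounding $\int|\p_{2n}z|^2z^{p-2}\phi^2$ by the horizontal energy (Proposition \ref{cacciopolimisto}), couples it with the horizontal Caccioppoli inequality (Proposition \ref{Xcacciopoli}), and then assembles a \emph{Euclidean} Caccioppoli inequality so that classical Euclidean Moser iteration applies --- not a uniform subelliptic De Giorgi--Nash--Moser theory, which you invoke but would have to construct. Note also that this step works for $n=1$ as well; the hypothesis $n>1$ is not where you place it.

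The second gap is larger. Your claim that iterating the Caccioppoli scheme ``over all orders'' yields uniform $L^2$ bounds on every iterated horizontal derivative is unjustified: the equation satisfied by a $k$-th derivative has coefficients and right-hand sides involving lower-order derivatives of $u_\e$ that at that stage are only in $L^2$, so the iteration does not close without intermediate H\"older gains. And the ``subelliptic Schauder estimates'' you then invoke for operators whose defining vector field $X_{2n-1,u}$ depends on the non-smooth unknown are precisely the content of Sections 2 and 4 of the paper: one must freeze $u$ in the coefficient of the vector field by its intrinsic Taylor polynomial $P^1_{x_0}\u$, build a parametrix from the fundamental solution $\G^\e_{x_0}$ of the frozen operator with estimates uniform in $\e$, and run an interleaved induction controlling $\nabla^{\e(k+1)}_u u$ and $\p_{2n}\nabla^{\e k}_u u$ simultaneously (Theorems \ref{step1} and \ref{ultimo}). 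This is where $n>1$ genuinely enters: the representation formula requires writing $\p_{2n}$ as a bracket of two $u$-\emph{independent} horizontal fields ($X_1$ and $X_n$), which is impossible for $n=1$. You correctly identify this as ``where the real work lies,'' but the proposal does not supply it, and it cannot be cited as off-the-shelf Schauder theory. Finally, bootstrapping on the limiting degenerate equation, as you suggest, is harder than the paper's route of proving all estimates uniformly in $\e$ for the uniformly subelliptic approximants and passing to the limit only at the end.
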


Invoking the implicit function theorem, we 
want to apply Theorem \ref{maintheorem} to the study of the
regularity   away from the characteristic locus of the Lipschitz
perimeter minimizers found in \cite{chy} for the case $\H^n$, $n>1$.
Here and in the following
$\nabla_E$ denotes the Euclidean gradient in $\R^{2n}$. We also denote by
$(y_1,...,y_{2n+1})$ exponential coordinates of the first kind\footnote{these are the coordinates used in \cite{chy}.}, defined by
$\exp(y_1 X_s+\sum_{i=1}^{2n-1} y_{i+1} X_i+y_{2n+1} \p_{2n})=\exp(s X_s)[\Pi_{i=1}^{2n-1} \exp(x_i X_i) ] \exp(x_{2n} \p_{2n}).$

%\begin{corollary}
%Let $O\subset \R^{2n}$ be a strictly  convex, smooth open set,
%$\phi\in C^{2,\alpha}(\bar O)$ and
%for each $(s,x_1,...,x_{2n-1})\in O$ denote by $(s,x_1,...,x_{2n-1})^*=(x_1,-s,x_3,-x_2,...)$. 
%Consider the family  $$\{g_\e(s,x_1,...,x_{2n-1})\}_\e \ \ 
%\sup_O |g_\e|+\sup_O|\nabla_E g_\e| \le C \ \ \  \text{(uniformly in }\e),$$
%of   smooth  solutions of 
%the approximating minimal surface PDE
%$$div\Bigg(\frac{\nabla_E g_\e + (s,x_1,...,x_{2n-1})^*}{\sqrt{\e^2+|\nabla_E g_\e + (s,x_1,...,x_{2n-1})^*|}}\Bigg)=0 \text{ in }O \ \text{ and }g_\e=\phi \ \text{ in }\p O$$
%found in \cite[Theorem 4.5]{chy}.  If for $p_0=(p_0^s,p_0^1,...,p_0^{2n-1})\in O$, $a>0$ and for every $\e>0$ we have $|\p_s g_\e(p_0)|>a>0$  (or any other partial derivative 
%is non-vanishing at $p_0$ uniformly in $\e$) then 
%there is a sequence $\e_k\to 0$ such that the Lipschitz perimeter minimizer $g=\lim_{\e_k\to 0}g_{\e_k}$ is smooth in a neighborhood
%of the point $p_0$.
%\end{corollary}

\begin{corollary}
Let $O\subset \R^{2n}$ be a strictly  convex, smooth open set,
$\phi\in C^{2,\alpha}(\bar O)$ and
for each $(y_1,...,y_{2n})\in O$ denote by $(y_1,...,y_{2n})^*=(y_2,-y_1,y_4,-x_3,...)$. 
Consider the family  $$\{g_\e(y_1,...,y_{2n})\}_\e \ \  \ 
\sup_O |g_\e|+\sup_O|\nabla_E g_\e| \le C \ \ \  \text{(uniformly in }\e),$$
of   smooth  solutions of 
the approximating minimal surface PDE
$$div\Bigg(\frac{\nabla_E g_\e + (y_1,...,y_{2n})^*}{\sqrt{\e^2+|\nabla_E g_\e + (y_1,...,y_{2n})^*|}}\Bigg)=0 \text{ in }O \ \text{ and }g_\e=\phi \ \text{ in }\p O$$
found in \cite[Theorem 4.5]{chy}.  If for $p_0=(p_0^s,p_0^1,...,p_0^{2n-1})\in O$, $a>0$ and for every $\e>0$ we have $|\p_{y_1} g_\e(p_0)|>a>0$  (or any other partial derivative 
is non-vanishing at $p_0$ uniformly in $\e$) then 
there is a sequence $\e_k\to 0$ such that the Lipschitz perimeter minimizer $g=\lim_{\e_k\to 0}g_{\e_k}$ is smooth in a neighborhood
of the point $p_0$.
\end{corollary}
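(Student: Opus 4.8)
The plan is to reduce the Corollary to Theorem \ref{maintheorem} by a change of coordinates. The solutions $g_\e$ in \cite{chy} are written in exponential coordinates of the first kind and solve a $t$-graph equation; I would first pass to the polarized coordinates $(s,x)$ used throughout the paper and check that, away from characteristic points, a $t$-graph $\{x_{2n}=g(s,x_1,\dots,x_{2n-1})\}$ can be re-expressed locally as an intrinsic graph $\{s=u(x)\}$ over an open subset of $\R^{2n}$. The hypothesis $|\p_{y_1}g_\e(p_0)|>a>0$ uniformly in $\e$ is precisely what guarantees that the implicit function theorem applies at $p_0$ \emph{with constants independent of $\e$}: it lets us solve $x_{2n}=g_\e(\cdot)$ for the $s$-variable on a fixed neighborhood $U$ of the image point, producing functions $u_\e$ on $U$. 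The point $p_0$ is non-characteristic for each approximating surface because $\p_{y_1}g_\e\ne0$ there (the derivative transverse to the horizontal distribution is controlled), so the reduction is legitimate.

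Next I would verify the three things needed to invoke Theorem \ref{maintheorem} on $U$. First, that $u_\e$ is a smooth solution of the Riemannian approximate equation \eqref{MAINgeq2e} with $f=0$: this is a formal computation showing that the first variation of the ($\e$-Riemannian) perimeter is coordinate-independent, so a smooth critical point of the $t$-graph functional that is non-characteristic is, after the implicit-function change of variables, a smooth critical point of the intrinsic-graph functional \eqref{MAINgeq2e}. Second, that $\|u_\e\|_{C^1_E(K)}\le C$ for compact $K\subset U$, uniformly in $\e$: this follows from the uniform bounds $\sup_O|g_\e|+\sup_O|\nabla_E g_\e|\le C$ together with the uniform lower bound $|\p_{y_1}g_\e|>a$, since the implicit function theorem expresses $\nabla_E u_\e$ as a rational function of $\nabla_E g_\e$ with denominator bounded below by $a$. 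Third, that a subsequence $u_{\e_k}$ converges pointwise a.e.\ on $U$: by Arzel\`a--Ascoli the uniform $C^1_E$ bound gives a subsequence $g_{\e_k}\to g$ in $C^0_{\loc}(O)$ (this is the Lipschitz perimeter minimizer), and composing with the continuous inverse map yields $u_{\e_k}\to u$ locally uniformly on $U$, where $u$ is the intrinsic-graph representation of $g$ near $p_0$.

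With these three properties, $u$ is by Definition \ref{vvs} a vanishing viscosity solution of \eqref{MAINgeq2} with $f=0$ on $U$, hence smooth by Theorem \ref{maintheorem}. Finally I would translate smoothness of $u$ back to smoothness of $g$: near $p_0$ the graph $\{s=u(x)\}$ coincides with $\{x_{2n}=g(\cdot)\}$, and since $u$ is smooth and the coordinate change between first-kind exponential and polarized coordinates is a diffeomorphism (polynomial, in fact), solving again by the implicit function theorem shows $g$ is smooth in a neighborhood of $p_0$.

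I expect the main obstacle to be the bookkeeping in the change of variables: one must check carefully that the $\e$-Riemannian first variation really is intrinsic under the passage from $t$-graph to intrinsic graph (so that $u_\e$ solves exactly \eqref{MAINgeq2e} and not some perturbed equation), and that all the estimates survive with $\e$-independent constants. None of this is deep, but it requires matching the conventions of \cite{chy} with those of the present paper and tracking how the non-characteristic lower bound $a$ enters each inequality. Once that is done, the result is an immediate corollary of Theorem \ref{maintheorem}.
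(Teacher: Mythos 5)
Your proposal follows essentially the same route as the paper: the implicit function theorem (made uniform in $\e$ by the hypothesis $|\p_{y_1}g_\e(p_0)|>a$) converts the $t$-graphs into intrinsic graphs $s=u_\e(x)$ near $p_0$, the uniform Lipschitz bounds on $g_\e$ transfer to uniform $C^1_E$ bounds on $u_\e$, and Theorem \ref{maintheorem} then applies to the limit. The paper's proof is just a two-sentence version of exactly this argument, so your more detailed write-up (including the check that $u_\e$ solves \eqref{MAINgeq2e} and that a subsequence converges) is a correct and fuller account of the same reduction.
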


\begin{proof}
The implicit function theorem and a change of coordinates  imply that the level set of $$y_{2n+1}-g_\e(y_1,...,y_{2n})$$ can be written as  smooth intrinsic graphs $s=u_\e(x)$ in a neighborhood  of
$p_0$, with $u_\e$ defined in an open set $\Omega\subset \R^{2n}$. The Lipschitz bounds on $g_\e$ (proved in \cite[Propositions 4.2-4]{chy})
yield uniform Lipschitz bounds on $u_\e$, thus allowing to apply
Theorem \ref{maintheorem} and conclude the proof.
\end{proof}

We remark that in the case $n=1$  of the first Heisenberg group the 
regularity of vanishing viscosity minimal intrinsic graphs is quite different.
In the forthcoming paper \cite{CCM-1} we study  this problem and 
prove a form of intrinsic regularity, with differentiability along the Legendrian
foliation of the minimal graph.

\bigskip
Equation \eqref{MAINgeq2} is an uniformly elliptic approximation
of a subelliptic equations. The defining vector fields have
Lipschitz coefficients and satisfy a weak H\"ormander condition,
since together with their first order vector fields they span the
space at every point. The main difficulty of the proof is to
handle the vector field
$$ X_{2n-1, u}=\p_{2n-1}+ u(x)\p_{2n}$$
%$$X_{1, u} = \p_{1} + u \p_{2n},$$
and the dependence on $\e$. A similar difficulty  arises in
problems of mathematical finance. For example in \cite{CPP},
\cite{CPP2}, it was proved that the viscosity solutions of the
following  equation are $C^\infty$
$$X_1^2 u + X_2 u=0 $$
where $X_1 = \p_{xx}$,$X_2 = \p_y u + u \p_z u$, satisfy a
weak H\"ormander condition analogue to the one in the present paper. The  techniques
in  \cite{CPP},
\cite{CPP2}, provide the main inspiration for the proof of Theorem \ref{maintheorem}.

\bigskip
The regularity of solutions  will be measured in terms of the natural 
norm of the   intrinsic
H\"older class $C^{1, \alpha}_u$, i.e.   functions $f$ such that
$\nabla^\e_u f$ is H\"older continuous, with respect to the
control distance $d_u$. The proof will be accomplished in two
steps:

\smallskip

{\bf STEP 1} First  prove that the Lipschitz continuous solutions
  are of class $C^{1, \alpha}_u$.
Since the operator $L_\e$ in (\ref{MAINgeq2e}) is represented in
divergence form, then by differentiating the PDE and combining several
horizontal and ``vertical" energy estimates,  it is possible to prove a  Euclidean
Cacciopoli-type inequality for the intrinsic gradient
$\nabla^{\e}_{ u }u$ of the solution. The Moser iteration technique
will then lead to H\"older continuous estimates uniform in $\e$ for the
gradient. This step holds also for $n=1$.

{\bf STEP 2} We prove the smoothness of the solution. In
order to do so we first note that the operator $L_\e$ can also be
represented in a divergence form:
\begin{equation}\label{nondiv}L_\e u= \sum_{i,j=1}^{2n} a^\e_{i,j}(\nabla^\e_u u)
X^\e_{i, u}X^\e_{j, u}u,\end{equation} where $$
a^\e_{ij}:R^{2n}\rightarrow R\quad  a^\e_{ij}(p)= \delta_{ij} -
\frac{p_ip_j}{1 + |p|^2}.$$ 
%\frac{p_ip_j}{\sqrt{1 + |p|^2}}.$$
For every fixed point $x_0$ we
will approximate the assigned operator with a linear, uniformly
subelliptic operator in divergence form $L_{\e, x_0}$, with
$C^{\infty}$ coefficients. 
 The approximation is carried out through
a ad-hoc {\it freezing} technique, where the function $u$ in the coefficients of the vector field
is substituted with polynomials, in a technique reminiscent of the work of Rothschild and Stein \cite{RS}.  The novel difficulty arises from the non-smoothness
of $u$, and has to be dealt with through a delicate bootstrap argument.
The existence of a fundamental solution
$\Gamma^\e_{x_0}$ for such operator as well as its estimates, uniform in $\e$, have previously been proved in the papers
\cite{BLU} and \cite{CM}. Eventually  $\Gamma^\e_{x_0}$ will be used to
define a parametrix for the fundamental solution of $L_\e$ and to
obtain estimates independent of $\e$, of the derivatives of any
order of the solution.

%%%%%%%%%%%%%%%%%%%%%%%%%%%    notazioni %%%%%%%%%%%%%%%%%%%%%%%%%

\section{Notations and known results}

\subsection {H\"older classes}

In the sequel we will always keep fixed a function $\bar u \in
C^{\infty}=C^{\infty}(\Omega),$ with $\Omega\subset \R^{2n}$ and consider the vector fields $X^\e_{i, \bar u}$ in
(\ref{ecampi}), with coefficients depending on the fixed function
$\bar u$. Let us define a new vector field 
$$X^\e_{2n+1 , \u}=  \p_{2n}= [X^\e_{1, \u}, X^\e_{n, \u}], $$
 which act as a second order derivative, and  call degree of $\s_i$ the
natural number $deg(\s_i)=1$ for $\s_i\leq 2n$, $deg(2n+1)=2$.
Correspondingly the degree of any multi-index $\s =
(\s_{1},\dots,\s_{m})$, $\s_{r}\in \{1,\ldots, 2n+1\}$,  $1\le r
\le m\in \N$, will be:
$$deg(\s)=\sum_{ i=1}^mdeg(\s_i).$$
We will also denote the cardinality of
$\sigma=(\s_{1},\dots,\s_{m})$ the number of its elements:
$$\#(\s) = m.$$

 We define the intrinsic derivative
\begin{equation}\label{e40}
  \nabla^\e_{\s, \u} = X^\e_{\s_1, \u}\cdots X^\e_{\s_m, \u},
\end{equation}
and $\nabla^{\e  k}_{\bar u}$ the
 vector field with components
$(\nabla^{\e}_{\s   \bar u})_{deg(\s)=k}$.

\medskip
Since the vector fields $X^\e_{1, \u}, ...,X^\e_{2n, \u},$ are the
Riemannian completion of an H\"ormander type set of vectors, they
give rise to a control distance $d_{\e, \u}$. The corresponding   metric balls are denoted
$B_{\e, \u}(x,r)$. As $\e\to 0$ the metric space $(\Omega, d_{\e,\u})$
converge in the Gromov-Hausdorff sense to $(\Omega, d_\u)$ (see \cite{cdpt}).
%
% are locally equivalent to the balls
%$B_{\u}(x,r)$ of the metrics $d_\u$. Precisely

%\begin{remark} (see \cite{CM})
%If $\u:\Omega\rightarrow \R$ is a fixed Lipschitz continuous
%function, for every compact set $K\subset \subset \Omega$ there
%exist constants $C_1,$ $C_2$ such that for every $x, x_0\in K,$
%$$d_{\e, \u}(x, x_0)\leq d_{\u}(x, x_0)\leq d_{\e, \u}(x, x_0).$$
%\end{remark}

%
\medskip

We next define the spaces of H\"older continuous functions related
to the fixed function $\u$.
\begin{definition}\label{d301}
Let $x_{0}\in\O$, $0<\a < 1$, assume that $\u$ is a fixed
Lipschitz continuous function, and that $u$ is defined on
$\Omega.$ We say that $u \in C_{\u}^{\a}(\Omega)$ if for every
compact set $K$ there exists a positive constant $M$ such that for
every $x, x_0\in K$ and $\e>0$
\begin{equation}\label{e301}
   |u(x) - u(x_{0})| \le M d_{\e, \u }(x, x_0).
\end{equation}
 Iterating this definition, if $k\geq 1$,  we say that $u \in
C_{\u}^{k,\a}(\O)$, if
 $\nabla^\e_{\u} u \in C_{\u}^{k-1,\a}(\O)$.
\end{definition}

\subsection{Taylor approximation.}

The following result is well know for vector fields with
$C^\infty$ coefficients (see \cite{NSW}) also holds for  vector
field is of the form $\p_{1} + \u \p_{2n}$, with $\u$ Lipschitz
continuous with respect to the Euclidean distance.  Let us first
denote by $e_1,\ldots , e_{2n} $ the canonical coordinates of a
point $x$ around $x_0$,
$$ x =exp(\sum_{i=1}^{2n-1} e_i X^\e_{i, \u} + e_{2n} X^\e_{2n+1
, \u})(x_0)$$
and, for a multi-index $\s=(\s_1,\ldots , \s_{2n})$ we will denote
$e_\s = (e_{\s_1},\ldots , e_{\s_{2n}})$. 

We explicitly note that,
since $X^\e_{2n, u}$ and $X^\e_{2n+1, u}$ are parallel, only one
of them can appear in the definition of canonical coordinates,
otherwise the values of $e_i$ would not be uniquely determined.
Due to this fact, for every multi-index $\s=(\s_1, \cdots \s_m),$
with components in $\{1, \cdots 2n \}$ we will denote $I(\s)=
(\r_1, \cdots \r_m)$, where $\r_i=\s_i$ if $\s_i\not= 2n$, and
$\r_i=2n+1$ if $\s_i=2n$.

\begin{theorem}\label{t301}
 Let $x_{0}\in\O$, $0<\a < 1, k \in \N \cup \{ 0
\}$ and assume that $u \in C_{\u}^{k,\a}(\O)$. Then we can define
Taylor polynomial of order $k$ the function
$$P^k_{x_0}u (x) = \sum_{h=1}^k \sum_{deg(\s) = h
, \atop \s_i \not=2n+1
}
\frac{1}{\#(\s)!} e_\s \nabla^\e_{ I(\s), \u}u(x_0)$$ and we have
\begin{equation}\label{e303}
  u(x) = P^k_{x_0}u (x) + O\left( d_{\e, \u}(x_0, x)^{k+\a}\right) \quad
  {\it as} \; \; x \to x_0.
\end{equation}
We will also set $P^{k}_{x_0}u=0$ for any  negative integer $k$.
\end{theorem}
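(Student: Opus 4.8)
The plan is to reduce the statement to the classical Taylor expansion for H\"ormander vector fields with $C^\infty$ coefficients (as in \cite{NSW}), and then to control the error introduced by the fact that only the coefficient $\u$ of $X^\e_{2n-1,\u}$ is merely Lipschitz rather than smooth. First I would establish the case $k=0$: if $u\in C^\a_\u(\O)$, then \eqref{e301} together with the fact that the Euclidean distance is locally controlled by $d_{\e,\u}$ (since the $X^\e_{i,\u}$, being Lipschitz, are in particular locally bounded, so that the flow is well defined and one has the comparison $d_{\e,\u}(x_0,x)\ge c|x-x_0|$ near $x_0$) gives $|u(x)-u(x_0)|\le M d_{\e,\u}(x_0,x)^\a$, which is exactly \eqref{e303} with $P^0_{x_0}u=u(x_0)$. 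This uses that canonical coordinates of the second kind associated to the fields $X^\e_{1,\u},\dots,X^\e_{2n-1,\u},X^\e_{2n+1,\u}$ are well defined even for Lipschitz $\u$, a point already asserted in the paragraph preceding the statement.

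Next I would argue by induction on $k$. Assume the expansion \eqref{e303} holds up to order $k-1$ for every function in $C^{k-1,\a}_\u$, and let $u\in C^{k,\a}_\u(\O)$, so that by Definition \ref{d301} each component of $\nabla^\e_\u u$ lies in $C^{k-1,\a}_\u(\O)$. Along each of the coordinate curves $t\mapsto \exp(t X^\e_{\s_1,\u})(x_0)$ one integrates the fundamental theorem of calculus: $u$ is differentiable along the directions $X^\e_{i,\u}$ with H\"older-continuous derivative, so restricting to the relevant one-parameter subgroups and iterating the integration along the $2n$ canonical directions in the prescribed order produces, after collecting terms by degree, precisely the polynomial $P^k_{x_0}u(x)=\sum_{h=1}^k\sum_{deg(\s)=h,\ \s_i\ne 2n+1}\frac1{\#(\s)!}e_\s\nabla^\e_{I(\s),\u}u(x_0)$, with the substitution $I(\s)$ accounting for the identification of $X^\e_{2n,u}$ with $X^\e_{2n+1,u}=\p_{2n}$ as a second-order (degree-two) direction. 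The remainder after $k$ integrations is an integral of $\nabla^{\e k}_\u u - \nabla^{\e k}_\u u(x_0)$ over a box of $d_{\e,\u}$-size $\r=d_{\e,\u}(x_0,x)$; since $\nabla^{\e k}_\u u\in C^\a_\u$ by hypothesis, this is $O(\r^\a)$ times the volume factor associated to the box, and the scaling of the canonical coordinates ($e_i\sim\r$ for $deg=1$, $e_{2n}\sim\r^2$) gives the bound $O(\r^{k+\a})$.

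The main obstacle is making the iterated integration along canonical coordinates rigorous when the vector field $X^\e_{2n-1,\u}=\p_{2n-1}+\u\,\p_{2n}$ has only Lipschitz coefficient: one must verify that the associated exponential map $e\mapsto \exp(\sum_{i\le 2n-1} e_i X^\e_{i,\u}+e_{2n}X^\e_{2n+1,\u})(x_0)$ is a bi-Lipschitz homeomorphism onto a neighborhood of $x_0$ with the expected ball-box behavior, and that along the curves generated by the Lipschitz field the function $u$ is absolutely continuous with the stated derivative. Here I would exploit the very special structure of the field — its flow is explicit, $(\p_{2n-1}+\u\p_{2n})$ integrates to $t\mapsto(x'+te_{2n-1})$ in the first $2n-1$ coordinates and $x_{2n}\mapsto x_{2n}+\int_0^t\u(\cdot)\,ds$ in the last — so that absolute continuity of $u$ along these curves follows from $u\in C^\a_\u$ applied iteratively, exactly as in the smooth case but with Lipschitz (hence Lebesgue-a.e.\ differentiable) coefficients. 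Once the ball-box estimate and this absolute continuity are in place, the rest is the bookkeeping of collecting terms by degree and the elementary scaling estimate for the remainder, and the induction closes.
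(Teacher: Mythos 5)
The paper does not actually prove Theorem \ref{t301}: it is stated as a known result, with the smooth-coefficient case attributed to \cite{NSW} and the extension to the Lipschitz field $\p_{2n-1}+\u\,\p_{2n}$ simply asserted (with \cite{CMo} and \cite{CPP} cited for the companion Remark \ref{r3062}). So there is no in-paper argument to compare against; what you have written is a reconstruction of the standard Folland--Stein/Nagel--Stein--Wainger proof, and your identification of the only genuinely new point --- well-posedness of the exponential map, the ball-box comparison, and absolute continuity of $u$ along integral curves when the coefficient of $X^\e_{2n-1,\u}$ is merely Lipschitz --- is exactly where the real work lies. Your strategy for that point (the flow of $\p_{2n-1}+\u\,\p_{2n}$ is explicit and preserves the first $2n-1$ coordinates, so Cauchy--Lipschitz gives unique integral curves and one can run the fundamental theorem of calculus along them) is the right one, though as written it is a plan rather than a proof.

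One bookkeeping caveat. The coordinates $e_1,\dots,e_{2n}$ in the statement are canonical coordinates of the \emph{first} kind for the frame $X^\e_{1,\u},\dots,X^\e_{2n-1,\u},X^\e_{2n+1,\u}$, i.e.\ $x=\exp\bigl(\sum_i e_i X^\e_{i,\u}+e_{2n}X^\e_{2n+1,\u}\bigr)(x_0)$ is a \emph{single} exponential. The factor $1/\#(\s)!$ in $P^k_{x_0}u$ comes from the one-variable Taylor expansion of $t\mapsto u\bigl(\exp(tV)(x_0)\bigr)$ with $V=\sum_i e_iX^\e_{i,\u}+e_{2n}X^\e_{2n+1,\u}$, whose $j$-th derivative is $V^ju=\sum_{\#(\s)=j}e_\s\nabla^\e_{I(\s),\u}u$; terms with $\#(\s)\le k$ but $deg(I(\s))>k$ are then absorbed into the remainder using $|e_{2n}|\lesssim\r^2$. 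Your phrase ``iterating the integration along the $2n$ canonical directions in the prescribed order'' describes instead a composition of flows (coordinates of the second kind), which would produce a differently normalized polynomial (products $e_i^{j_i}/j_i!$ rather than $1/\#(\s)!$). The expansion should therefore be carried out along the single curve $t\mapsto\exp(tV)(x_0)$; with that correction, and with the Lipschitz-flow lemma actually carried out, your argument closes.
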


Note that $$P_{x_0}^1u(x)=\sum_{i=1}^{2n-1} e_i(x)X_{i,\u}^\e u(x_0) +e_{2n}(x)X_{2n+1,\u}^\e u(x_0).$$

From the explicit expression of the Taylor polynomials of order
less than 4 it is possible to directly deduce the following
result.
% also contained in Remarks  2.24 and 2.25 in \cite{CPP}. 

\begin{remark} \label{r3062}
If $u\in C^{k,\a}_{\u}(\O)$, $0\le k\le 4$,   $K$ is a compact
subset of $\O$ and $\s$ is a multi-index, then there exists $C>0$
such that
  %$$\left|\left(P_{x_{0}}^{k}u(\x) - P_{x_{0}}^{1}u(\x)\right) -
  %\left(P_{x}^{k}u(\x) - P_{x}^{1}u(\x)\right)\right| \le C d_{\e, \u}(x_{0},x)^{\a}
  %d_{\e, \u}(x_{0},\x)^{2},$$
  \begin{equation}\label{tosta}
  \left|P_{x_{0}}^{k}u(\x) - P_{x}^{k}u(\x)\right| \le C d_{\e, \u}^{\a}(x_{0},x)
  d_{\e, \u}^{k}(x_{0},\x),\end{equation}
for every $x, x_{0}, \x \in K$, see \cite[Lemma 3.6]{CMo} and \cite[Remarks  2.24 and 2.25]{CPP}.
\end{remark}

%%%%%%%%%%%%%%%%%%%%%%%%%%%%%
\subsection{Derivatives and Frozen derivatives.}
We will introduce here first order operators with polynomial
coefficients which locally approximate the vector fields $X^\e_{i,
\u}$. These new vector fields are defined in terms of the Taylor
development of the coefficients of $X^\e_{ i, \u}$. Precisely, for
any fixed point $x_0$ we will call operator frozen at the point
$x_0$
$$X^\e_{i, x_0}= X^\e_{i, \u} \text{ if  }
i \not= 2n-1, \quad X^\e_{2n-1, x_0} = \p_{2n-1} +
P^1_{x_0}\u(x)\p_{2n}$$ and for every multi-index $\sigma$,
$$  \nabla^\e_{\s, x_0}  = X^\e_{\s_1, x_0}\cdots X^\e_{
\s_m, x_0},$$ and $\nabla^{\e  k}_{x_0}$ will be the
 vector field with components
$(\nabla^{\e}_{ \s, x_0})_{deg(\s)=k}$.

\medskip
 These frozen derivatives have been defined as approximation of
the intrinsic derivatives, depending on $\u.$ In order to clarify
this point, we recall the following definition, given in \cite{FS}
and \cite{NSW}. If $\alpha \in R $ and $f(x, x_0)= O(d_{\e, \u
}^\alpha(x, x_0)) $ as $x\rightarrow x_0,$ we will say that the
differential operator $f(x, x_0)\nabla^{\e}_{\s, x_0}$ has degree
$deg(\s) -\alpha$. We have
$$X^\e_{i, \u} = X^\e_{i, x_0} \quad \text{ if } i\not= 2n-1$$
$$X^\e_{i, \u} = X^\e_{i, x_0} + (\u-P^1_{x_0} \u(x)) \p_{2n}\quad \text{
if } i = 2n-1$$ Hence, if $\u$ is of class $C^{1, \alpha}_\u$,
then $(\u-P^1_{x_0} \u(x))\p_{2n}$ is a differential operator of
degree $1-\alpha,$ while $X^\e_{i, \u}$ and $X^\e_{i, x_0}$ have
degree 1. This means that the intrinsic derivative is expressed as
the frozen derivative, plus a lower order term.

\medskip

More generally the following approximation result holds:

\begin{lemma} \label{duederiv}
If $\bar u\in C^{k-1, \alpha} _\u(\O)$,  and $\sigma$ is a
multi-index such that $deg(\s)\le k $, then for every function
$\phi\in C^\infty_0(\O)$ the derivative $\nabla^\e_{\s, \bar u}
\phi$ can be represented as
  \begin{equation}\begin{split} \nabla^\e_{\s, \bar u}\phi  = & \nabla^\e_{\s, x_0}\phi +
  \\&
  \!\!\!\!\!\!    \!\!\!\!\!\!    \sum_{deg(\rho)-h \leq deg(\s)}   \!\!\!\!\!\!   (\u - P^1_{x_0}\u)^h
    \!\!\!\!\!\!    \sum_{ deg(\mu_1)+\cdots +deg(\mu_k)\leq k-1\atop deg(\mu_k)\geq 0}
   \!\!\!\!\!\!  
      C_{\rho,\mu_i,\s,h}\prod_{ 1\leq deg(\mu_i)   }
        \!\!\!\!\!\!    \nabla^\e_{\mu, \u}(\u -
  P^1_{x_0}\u)\,  \nabla^\e_{\rho, x_0}\phi,
\end{split}\end{equation}
where  $C_{\rho,\mu,\s,h}$ are suitable constants. In particular the
operator $ \nabla^\e_{\s, \u}\phi(x) $ can be identified as a
differential operator of degree $deg(\s)$ and represented in terms
of frozen derivatives.
\end{lemma}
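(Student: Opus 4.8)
The plan is to prove the claimed identity by induction on the cardinality $\#(\s)=m$ of the multi-index. The only facts used about the vector fields are the elementary first-order identities $X^\e_{i,\u}=X^\e_{i,x_0}$ for $i\neq 2n-1$ and $X^\e_{2n-1,\u}=X^\e_{2n-1,x_0}+(\u-P^1_{x_0}\u)\,\p_{2n}$ recorded above, together with the observation that the vertical direction is untouched by the freezing: $\p_{2n}=X^\e_{2n+1,\u}=X^\e_{2n+1,x_0}$, since $\p_{2n}=[X^\e_{1,\u},X^\e_{n,\u}]=[X^\e_{1,x_0},X^\e_{n,x_0}]$ for $n>1$. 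The case $m=0$ (empty $\s$, $\nabla^\e_{\s,\u}\phi=\phi=\nabla^\e_{\s,x_0}\phi$) is trivial, and $m=1$ is exactly the displayed formula, with no correction when $\s_1\neq 2n-1$ and with the single term $h=1$, $\rho=(2n+1)$ when $\s_1=2n-1$.

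For the inductive step write $\s=(\s_1,\s')$ with $\#(\s')=m-1$, so that $\nabla^\e_{\s,\u}\phi=X^\e_{\s_1,\u}\bigl(\nabla^\e_{\s',\u}\phi\bigr)$. Apply the induction hypothesis to $\nabla^\e_{\s',\u}\phi$, but with $k-deg(\s_1)$ in place of the budget $k$ (legitimate since in the inductive step $deg(\s_1)\le deg(\s)-1\le k-1$, so $deg(\s')=deg(\s)-deg(\s_1)\le k-deg(\s_1)$ and $\u\in C^{k-1,\a}_\u\subseteq C^{k-deg(\s_1)-1,\a}_\u$): this writes $\nabla^\e_{\s',\u}\phi$ as $\nabla^\e_{\s',x_0}\phi$ plus a sum of terms $(\u-P^1_{x_0}\u)^h\prod_i\nabla^\e_{\mu_i,\u}(\u-P^1_{x_0}\u)\,\nabla^\e_{\rho,x_0}\phi$ with $deg(\rho)-h\le deg(\s')$ and $\sum_i deg(\mu_i)\le k-deg(\s_1)-1$. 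Now distribute the derivation $X^\e_{\s_1,\u}$ over each such term by the Leibniz rule; it can land on three kinds of factors. (i) On $\nabla^\e_{\rho,x_0}\phi$: $X^\e_{\s_1,\u}\nabla^\e_{\rho,x_0}\phi=\nabla^\e_{(\s_1,\rho),x_0}\phi$ if $\s_1\neq 2n-1$, and $X^\e_{2n-1,\u}\nabla^\e_{\rho,x_0}\phi=\nabla^\e_{(2n-1,\rho),x_0}\phi+(\u-P^1_{x_0}\u)\nabla^\e_{(2n+1,\rho),x_0}\phi$ if $\s_1=2n-1$, using the elementary identities and $\p_{2n}=X^\e_{2n+1,x_0}$. (ii) On a factor $\nabla^\e_{\mu_i,\u}(\u-P^1_{x_0}\u)$: since $X^\e_{\s_1,\u}$ is the intrinsic field, indices concatenate, $X^\e_{\s_1,\u}\nabla^\e_{\mu_i,\u}(\u-P^1_{x_0}\u)=\nabla^\e_{(\s_1,\mu_i),\u}(\u-P^1_{x_0}\u)$. (iii) On one of the $h$ copies of $(\u-P^1_{x_0}\u)$: this produces $h\,(\u-P^1_{x_0}\u)^{h-1}\,X^\e_{\s_1,\u}(\u-P^1_{x_0}\u)$, i.e. a new index $\mu=(\s_1)$ together with $h\mapsto h-1$.

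It remains to check the constraints survive. In case (i) the pair $(\rho,h)$ changes so that $deg(\rho)-h$ rises by exactly $deg(\s_1)$ (when $\s_1=2n-1$, the extra term has $\rho$ lengthened by a $2n+1$ and $h$ raised by $1$, a net $+1=deg(\s_1)$), while $\sum_i deg(\mu_i)$ is unchanged; in case (ii), $deg(\rho)-h$ is unchanged and $\sum_i deg(\mu_i)$ rises by $deg(\s_1)$; in case (iii), $deg(\rho)-h$ rises by $1\le deg(\s_1)$ and $\sum_i deg(\mu_i)$ rises by $deg(\s_1)$. Hence every produced term satisfies $deg(\rho)-h\le deg(\s')+deg(\s_1)=deg(\s)$ and $\sum_i deg(\mu_i)\le(k-deg(\s_1)-1)+deg(\s_1)=k-1$, which is the asserted form once the combinatorial coefficients are absorbed into the constants $C_{\rho,\mu_i,\s,h}$. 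Finally, for the statement that $\nabla^\e_{\s,\u}$ is an operator of degree $deg(\s)$: the leading term $\nabla^\e_{\s,x_0}$ has degree $deg(\s)$ by definition, while in each correction term Theorem~\ref{t301} gives $\u-P^1_{x_0}\u=O(d_{\e,\u}^{1+\a})$ and each $\nabla^\e_{\mu_i,\u}(\u-P^1_{x_0}\u)$ is bounded on compacts (because $\u\in C^{k-1,\a}_\u$ with $deg(\mu_i)\le k-1$); thus the coefficient is $O(d_{\e,\u}^{h(1+\a)})$ and the term is a differential operator of degree $deg(\rho)-h(1+\a)\le(deg(\s)+h)-h(1+\a)=deg(\s)-h\a\le deg(\s)$.

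The main obstacle is the combinatorial bookkeeping in the inductive step: one must verify that prepending one more intrinsic vector field and expanding by Leibniz into the already cumbersome sum reproduces \emph{exactly} the two index constraints $deg(\rho)-h\le deg(\s)$ and $\sum_i deg(\mu_i)\le k-1$ — keeping straight the case $\s_1=2n-1$ versus $\s_1\neq 2n-1$ and remembering that the regularity budget $k$ has to be lowered by $deg(\s_1)$ when invoking the induction hypothesis. Everything else is a mechanical consequence of the two elementary identities for $X^\e_{i,\u}$ and $\p_{2n}$ and of the Leibniz rule.
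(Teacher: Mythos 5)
Your proof is correct and follows essentially the same route as the paper's: induction peeling off the leading vector field, the elementary identity $X^\e_{i,\u}=X^\e_{i,x_0}+\delta_{i,2n-1}(\u-P^1_{x_0}\u)\p_{2n}$, and a Leibniz expansion over the three kinds of factors. Your bookkeeping of how $deg(\rho)-h$ and $\sum_i deg(\mu_i)$ change in each case is in fact more explicit than the paper's own verification.
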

\medskip\noindent
\begin{proof}      Since the function $\phi$ is of class
$C^\infty_0(\O)$,  its Lie derivatives can be simply computed as
directional derivatives. By
definition\begin{equation}\label{duno}X^\e_{i,\bar u}\phi=X^\e_{i,
x_0}\phi +\delta_{i, 2n-1}(\u -
  P^1_{x_0}\u)\,  \p_{2n}\phi.\end{equation}
Hence the assertion is true if $deg(\s)=1$.
\medskip
If the assertion is true for any $\s$ such that $deg(\s)=k,$ then
we consider a multiindex $\s$ such that $deg(\s)=k+1$. In this
case
$$\s= (\s_1, \bar \s),$$ where
$$\left\{
\begin{array}{cc}
deg(\bar \s) = k & \text{ if }\s_1 \not= 2n+1\\
deg(\bar \s) = k-1 &  \text{ if } k\geq 3 \text{ and }\s_1 = 2n+1
\end{array}\right.$$
We have
 $$\nabla^\e_{\s, \u}\phi(x) = X^\e_{\s_1, \bar u}
\nabla^\e_{\bar \s, \u}\phi(x) =$$ by inductive assumption
$$=
 X^\e_{\s_1, \u}\Big(\nabla^\e_{\bar \s, x_0}\phi\Big)+
\!\!\!\!\!\   \!\!\!\!  \sum_{deg(\rho)-h \leq deg(\bar \s)}\!\!\!\!  X^\e_{\s_1 ,\u}\Big( (\u - P^1_{x_0}\u)^h
 \!\!\! \!\!\!\!\!\!  \sum_{ deg(\mu_1)+\cdots deg(\mu_k)\leq k-1\atop deg(\mu_k)\geq 0}
 \!\!\!\!\!\!\!\!\!\!   C_{\rho,\mu_i,\bar \s,h}
  \!\!\!\! 
  \prod_{ 1\leq deg(\mu_i)   }
   \!\!\!\! 
     \nabla^\e_{\mu, \bar u}(\u -
  P^1_{x_0}\u)\,  \nabla^\e_{\rho, x_0}\phi\Big),   $$
(also using (\ref{duno}))
 $$=
 X^\e_{\s_1, x_0} \nabla^\e_{\bar\s, x_0}\phi + \delta_{\s_1 2n-1}(\u - P^1_{x_0}\u)
 \p_{2n}\nabla^\e_{\bar\s, x_0}\phi$$$$+
\!\!\!\!\!\!  \sum_{deg(\rho)-h \leq deg(\bar\s)} (\u - P^1_{x_0}\u)^{h-1}
   X^\e_{\s_1, \u}(\u - P^1_{x_0}\u)\!\!\!\!\!
     \sum_{ deg(\mu_1)+\cdots deg(\mu_k)\leq k-1\atop
   deg(\mu_k)\geq 0}  \!\!\!\!\!
   C_{\rho,\mu_i,\bar\s,h}\prod_{ 1\leq deg(\mu_i)   }
   \nabla^\e_{\mu, \u}(\u -
  P^1_{x_0}\u)\,  \nabla^\e_{\rho, x_0}\phi+
$$$$+ \!\!\!
  \sum_{deg(\rho)-h \leq deg(\bar\s)}  \!\!\!\! (\u - P^1_{x_0}\u)^{h}
\!\!\!\!   \sum_{ deg(\mu_1)+\cdots +deg(\mu_k)\leq k-1\atop deg(\mu_k)\geq 0}
\!\!\!\!\!
 C_{\rho,\mu_i,\bar\s,h}
 X^\e_{\s_1, \u}\Big( \prod_{ 1\leq deg(\mu_i)   }  \nabla^\e_{\mu, \u}(\u -
  P^1_{x_0}\u)\Big)\,  \nabla^\e_{\rho, x_0}\phi+$$

  $$+
 \!\!\!\!  \sum_{deg(\rho)-h \leq deg(\bar\s)} \!\!\!\!  (\u - P^1_{x_0}\u)^{h}
  \sum_{ deg(\mu_1)+\cdots +deg(\mu_k)\leq k-1\atop deg(\mu_k)\geq 0}
   \!\!\!\!  C_{\rho,\mu_i,\bar\s,h}
  \prod_{ 1\leq deg(\mu_i)   }  \nabla^\e_{\mu, \u}(\u -
  P^1_{x_0}\u)\,  X^\e_{\s_1, x_0}\Big( \nabla^\e_{\rho, x_0}\phi\Big)=$$
 $$+
 \delta_{\s_1, 2n-1}  \!\!\!\!  \sum_{deg(\rho)-h \leq deg(\bar\s)}  \!\!\!\! 
 (\u - P^1_{x_0}\u)^{h+1}
  \!\!\!\! \sum_{ deg(\mu_1)+\cdots +deg(\mu_k)\leq k-1\atop deg(\mu_k)\geq 0}
   \!\!\!\!  C_{\rho,\mu_i,\bar\s,h}
  \prod_{ 1\leq deg(\mu_i)   }  \nabla^\e_{\mu, \u}(\u -
  P^1_{x_0}\u)\,  \p_{2n}\nabla^\e_{\rho, x_0}\phi.$$
Note that the first term satisfies
$$X^\e_{\s_1, x_0}
\nabla^\e_{\bar\s, x_0}\phi=\nabla^\e_{\s, x_0}\phi.$$ The second
term is $(\u - P^1_{x_0}\u) \p_{2n}\nabla^\e_{\bar\s, x_0}\phi$.
It can be considered one of the term listed in the thesis, with
$h=1$, while $\p_{2n}\nabla^\e_{\bar\s, x_0} = \nabla^\e_{\rho,
x_0},$ for a suitable $\r$, of degree $deg(\r)=k+2.$ Hence
$deg(\r)-h=k+1.$ Similarly, all the other terms are in the form,
indicated in the thesis (in both case, $\s_1 \not= 2n+1$ or $
k\geq 3$ and $\s_1 = 2n+1$).
\end{proof}
\bigskip

The vector fields $X^\e_{i, x_0}$ satisfy an H\"ormander type
condition, hence they define a control distance $d_{\e,
x_{0}}(x_{0},\x)$. The corresponding metric balls $B_{\e, x_0}(x,r)$ have volume
comparable to $r^{2n+1}$, and we will call
\begin{equation}\label{homodimen}Q=2n+1
\end{equation} the {\it  homogeneous dimension} of the space
$(\R^{2n},d_{\e, x_0})$. Note that the homogeneous dimension is
the same as the Hausdorff dimension of $(\R^{2n}, d_\u)$, defined in the introduction.
A simple modification of Proposition 2.4 in \cite{CPP} yields the following relation
between $d_{\e,x_0}$ and  the control distance $d_{\e, \u}$ associated to the vector
fields $X^\e_{i, \u}:$

\begin{proposition}\label{p201}
 For every compact subset $K$ of \ $\O$, there exists a positive
 constant $C = C(K)$ such that for every $x, x_{0},\ \in K$

    $C^{-1} d_{\e, x_{0}}(x_{0},\x) \le d_{\e, \u}(x_{0},\x) \le
 C d_{\e, x_0}(x_0,\x),$

   $d_{\e, x_{0}}(x_{0},x) \le C (d_{\e, x_{0}}(x_{0},\x) +
 d_{\e, \x}(\x, x)),$

  $d_{\e, \u}(x_{0},x) \le C (d_{\e, \u}(x_{0},\x) + d_{\e, \u}(\x, x)).$
\end{proposition}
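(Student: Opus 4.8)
The plan is to follow \cite[Proposition 2.4]{CPP}, indicating the modifications. The third line is just the ordinary triangle inequality for the genuine metric $d_{\e,\u}$ (so one may take $C=1$ there), so only the first two lines carry content, and for both the starting point is that the frozen and unfrozen families differ \emph{only} in the index $2n-1$:
\[
X^\e_{2n-1,\u}=X^\e_{2n-1,x_0}+b_{x_0}(x)\,\p_{2n},\qquad b_{x_0}:=\u-P^1_{x_0}\u,
\]
and, more generally, $X^\e_{2n-1,\x}=X^\e_{2n-1,x_0}+(P^1_\x\u-P^1_{x_0}\u)\,\p_{2n}$, while the perturbed direction $\p_{2n}=[X^\e_{1,\cdot},X^\e_{n,\cdot}]$ is the \emph{same} degree--$2$ commutator direction for every base point and every $\e$. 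The one analytic input is the linear bound $|b_{x_0}(x)|\le C\,d_{\e,\u}(x_0,x)$ on $K$, which follows from the Euclidean Lipschitz character of $\u$ together with the ball--box estimates $|e_i(x)|\le C\,d_{\e,\u}(x_0,x)$ for $i\le 2n-1$ and $|e_{2n}(x)|\le C\,d_{\e,\u}(x_0,x)^2$; all constants below are uniform in $\e\in(0,1]$ and in $x_0,\x,x\in K$, since the Lipschitz norms of the coefficients, the commutator identity, and the ball--box constants for the families $X^\e_{i,\u}$, $X^\e_{i,x_0}$, $X^\e_{i,\x}$ are $\e$--independent (the only $\e$--dependent field $\e\p_{2n}$ merely adds an admissible direction of modulus $\le 1$).

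To prove the equivalence in the first line, fix $x_0,\x\in K$ and a sub-unit curve $\gamma$ for $\{X^\e_{i,\u}\}$ joining them, of length $t$; it stays in $B_{\e,\u}(x_0,Ct)$, so $|b_{x_0}(\gamma(s))|\le C's$. Rewriting $\dot\gamma$ through the frozen fields, $\gamma$ solves the same control ODE driven by $\{X^\e_{i,x_0}\}$ with the same controls \emph{plus} a forcing $a_{2n-1}(s)\,b_{x_0}(\gamma(s))\,\p_{2n}$. The purely frozen curve $\tilde\gamma$ (same controls, fields $X^\e_{i,x_0}$, same start) then agrees with $\gamma$ in all coordinates but the $2n$--th, and the equation for the difference of $2n$--th coordinates closes up by Gronwall, its only feedback being through the bounded Euclidean gradient of $P^1_{x_0}\u$; this gives $\gamma(t)=\tilde\gamma(t)+v\,e_{2n}$ with $|v|\le C\,t^2$. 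Hence
\[
d_{\e,x_0}(x_0,\x)\le d_{\e,x_0}(x_0,\tilde\gamma(t))+d_{\e,x_0}(\tilde\gamma(t),\x)\le t+C\sqrt{|v|}\le C\,t,
\]
where the middle ball estimate is again ball--box in the frozen metric; taking the infimum over $\gamma$ gives $d_{\e,x_0}(x_0,\x)\le C\,d_{\e,\u}(x_0,\x)$. Exchanging the two families (the forcing becomes $-b_{x_0}$) yields the reverse inequality, so $C^{-1}d_{\e,x_0}\le d_{\e,\u}\le C\,d_{\e,x_0}$.

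The quasi-triangle inequality of the second line follows by concatenation. Put $t_1:=d_{\e,x_0}(x_0,\x)$ and take a $\{X^\e_{i,\x}\}$--sub-unit curve $\eta$ from $\x$ to $x$ of length $t_2$, staying in $B_{\e,\x}(\x,Ct_2)$. On $\eta$ the coefficient $P^1_\x\u-P^1_{x_0}\u$ of the perturbation is controlled by $|P^1_\x\u-\u|+|\u-P^1_{x_0}\u|\le C\big(d_{\e,\u}(\x,\cdot)+d_{\e,\u}(x_0,\cdot)\big)\le C(t_1+t_2)$, using the first line and the triangle inequality for $d_{\e,\u}$. The same Gronwall step run for time $t_2$ produces a residual $\p_{2n}$--displacement of size $\le C(t_1+t_2)\,t_2$ between $x=\eta(t_2)$ and its frozen-at-$x_0$ companion $\tilde\eta(t_2)$, whence
\[
d_{\e,x_0}(x_0,x)\le d_{\e,x_0}(x_0,\x)+d_{\e,x_0}(\x,\tilde\eta(t_2))+d_{\e,x_0}(\tilde\eta(t_2),x)\le t_1+t_2+C\sqrt{(t_1+t_2)\,t_2}\le C(t_1+t_2),
\]
since $\sqrt{(t_1+t_2)t_2}\le t_1+t_2$; taking the infimum over $\eta$ gives the claim.

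The real work is the exponent bookkeeping. Because $\u$ is only \emph{Euclidean} Lipschitz one gets merely the linear decay $|b_{x_0}(x)|\lesssim d_{\e,\u}(x_0,x)$, not a quadratic one; but $\p_{2n}$ is a degree--$2$ bracket direction, so a $\p_{2n}$--displacement of size $\rho$ is cheap, costing only $\lesssim\sqrt\rho$ in the frozen distance. These two effects match exactly: the residual displacements $\lesssim t^2$ (resp. $\lesssim(t_1+t_2)t_2$) cost only $\lesssim t$ (resp. $\lesssim t_1+t_2$) to correct. Making this rigorous --- the Gronwall separation of the $2n$--th coordinate, and the repeated use of a ball--box theorem uniform in $\e$ and in the base point for all three families, which is itself the ``simple modification'' of \cite{CPP} --- is the only genuine obstacle; the $\e$--uniformity is then automatic and consistent with the Gromov--Hausdorff convergence $(\Omega,d_{\e,\u})\to(\Omega,d_\u)$ recalled above.
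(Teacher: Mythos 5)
Your proof is correct and follows exactly the route the paper intends: the paper gives no argument of its own but defers to a ``simple modification of Proposition 2.4 in [CPP]'', and your curve-comparison with Gronwall control of the $2n$-th coordinate, combined with the ball--box fact that a $\p_{2n}$-displacement $\rho$ costs only $\sqrt\rho$, is precisely that modification. The exponent bookkeeping (linear decay of $\u-P^1_{x_0}\u$ against the degree-$2$ bracket direction) is the right and only delicate point, and you handle it correctly and uniformly in $\e$.
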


%%%%%%%%%%%%%%%%%%%%%%%%%%%%%%%%%%     stime di Gamma   %%%%%%%%%%%%%%%%%%%%
\subsection{Linearized and frozen operator}

In analogy with the definition of linear vector fields, in terms
of a fixed function $\u$, we can also define a linearization
$L_{\e, \u}$ of the operator $L_\e$, written in terms of the
linearized vector fields $X^\e_{i, \u}$:

\begin{equation}\label{nondivlin}L_{\e, \u}u= \sum_{i,j=1}^{2n} a^\e_{i,j}(\nabla^\e_{\u}\u) X^\e_{i, \u}X^\e_{j, \u}u,\end{equation} where $a^\e_{i,j}$
are defined in (\ref{nondiv}). Since the function $\u$ is fixed,
the operator is a linear non divergence type operator, whose
coefficients have the regularity of the function $\u$. In case
$\u$ is not smooth, it is natural to approximate it with a frozen
operator, defined in term of the vector fields $X^\e_{i, x_0}$:
\begin{equation}\label{nondivfroz}L_{\e, x_0}u = \sum_{i,j=1}^{2n}
a^\e_{i,j}(\nabla^\e_{\u}\u(x_0)) X^\e_{i, x_0}X^\e_{j,
x_0}u,\end{equation} where $a^{\e}_{ij}$ are defined in
(\ref{nondiv}). This is a divergence form uniformly subelliptic
operator with $C^{\infty}$ coefficients, which depends on $\e$.
Hence it has a fundamental solution $\G^\e_{x_{0}}$ (see
\cite{BLU}), and its dependence on $\e$ which can be handled as in
\cite{CM}. Since $\G^\e_{x_{0}}$ depends on many variables, the
notation
 $$X^\e_{i, x_0}(x)\G^\e_{x_{0}}( \; \cdot \; ,\x)$$
shall denote the $X^\e_{i, x_0}$-derivative of $\G^\e_{x_{0}}(s,
\x)$ with respect to the variable $s$, evaluated at the point $x$.

\begin{theorem}\label{fundam}(\cite{CM} - Theorem 1.1) Let $x_{0}\in\O$.
 For every compact set $K\subset \Omega$  and for every $p\in \N$ there
  exist two positive
 constants $C, C_p$ independent of $\e$, such that
\begin {equation}\label{2}
|\nabla^\e_{\s,  x_0}(x)\G^\e_{x_0} (\cdot,\x)|\le C_{p }
\frac{d_{\e, x_0} ^{2-p}(x,\x)}{|B_{\e,x_0} (x, d_{\e, x_0}
(x,\x))|},\quad deg(\s) = p
\end {equation}
for every $x, \x\in K$ with $x\not= \x$,  where  $B_{\e,x_0}(x,r)$
denotes the ball with center $x$ and radius $r$ of the distance
$d_{\e,x_0}$. If $p =0$ we mean that no derivative are applied on
$\G^\e_{x_0}$.
\end{theorem}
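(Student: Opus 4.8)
The estimate \eqref{2} is \cite[Theorem 1.1]{CM}; the plan is to reconstruct that argument (it is of the type used repeatedly below, cf. \cite{CPP}). First I would reduce to a model situation. Once $x_0$ is fixed the numbers $a^\e_{ij}=a^\e_{ij}(\nabla^\e_{\u}\u(x_0))$ are constants, so $L_{\e,x_0}v=\sum_{i}X^\e_{i,x_0}\big(\sum_{j}a^\e_{ij}X^\e_{j,x_0}v\big)$ is genuinely of divergence form. Since $\u$ is a fixed smooth function, $|\nabla^\e_{\u}\u(x_0)|$ is bounded on $K$ independently of $\e$, and from $a^\e_{ij}(p)=\d_{ij}-p_ip_j/(1+|p|^2)$ one gets $\l\,\mathrm{Id}\le(a^\e_{ij})\le\Lambda\,\mathrm{Id}$ with $0<\l\le\Lambda<\infty$ independent of $(\e,x_0)\in(0,1]\times K$. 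Hence it is enough to prove \eqref{2} for the operators $\sum_{i,j}a_{ij}X^\e_{i,x_0}X^\e_{j,x_0}$ with uniformly elliptic constant coefficients and the explicit polynomial vector fields $X^\e_{i,x_0}$ of \eqref{ecampi} (with $P^1_{x_0}\u$ in place of $\u$).

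Next I would control the underlying geometry uniformly in $\e$. The fields $X^\e_{1,x_0},\dots,X^\e_{2n,x_0}$ together with $\p_{2n}=[X^\e_{1,x_0},X^\e_{n,x_0}]$ satisfy H\"ormander's condition of step $2$, uniformly in $(\e,x_0)\in(0,1]\times K$; being in upper triangular form with affine coefficients, the balls $B_{\e,x_0}(x,r)$ are comparable to explicit boxes, so $|B_{\e,x_0}(x,r)|\approx r^{2n-1}\max(\e r,r^2)$, the doubling inequality $|B_{\e,x_0}(x,2r)|\le C\,|B_{\e,x_0}(x,r)|$ holds, and a $(1,1)$-Poincar\'e inequality holds on $d_{\e,x_0}$-balls, all with constants independent of $(\e,x_0)$ --- a uniform version of the ball--box theorem of \cite{NSW}, in the spirit of Proposition \ref{p201}. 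The exponent $Q=2n+1$ is only the large-scale (equivalently, the $\e=0$) behaviour of $|B_{\e,x_0}|$, which is why \eqref{2} is stated with the ball volume, and not a power of $d_{\e,x_0}(x,\x)$, in the denominator.

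Granted uniform doubling and Poincar\'e, the case $p=0$ would follow from the construction of the fundamental solution $\G^\e_{x_0}\ge0$ of \cite{BLU} together with the potential-theoretic size bound $\G^\e_{x_0}(x,\x)\le C\,d_{\e,x_0}^2(x,\x)\,/\,|B_{\e,x_0}(x,d_{\e,x_0}(x,\x))|$; equivalently one may write $\G^\e_{x_0}(x,\x)=\int_0^\infty p^\e_t(x,\x)\,dt$ for the heat semigroup of $L_{\e,x_0}$ and use that doubling plus Poincar\'e force two-sided Gaussian heat-kernel bounds with structural constants. Either way the constant depends only on $\l$, $\Lambda$ and the doubling and Poincar\'e constants, hence is uniform in $\e$; the same inputs give an invariant Harnack inequality for nonnegative solutions of $L_{\e,x_0}v=0$, again $\e$-uniform.

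For $p\ge1$ I would combine rescaling with interior subelliptic a priori estimates. Fix $x\ne\x$ and set $\r=d_{\e,x_0}(x,\x)$; then $v:=\G^\e_{x_0}(\cdot,\x)$ is a nonnegative solution of $L_{\e,x_0}v=0$ on $B_{\e,x_0}(x,\r/2)$. Using the anisotropic dilations adapted to the $d_{\e,x_0}$-geometry, the scale-$\r$ problem on $B_{\e,x_0}(x,\r)$ is rescaled to a unit-scale problem for an operator that again lies in a fixed class --- uniformly subelliptic, divergence form, with polynomial coefficients of bounded degree and ellipticity and structural constants controlled independently of $(\e,\r,x_0)$. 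For such operators one has interior a priori estimates: Caccioppoli and Moser iteration for the $L^\infty$ bound, then iterated subelliptic Schauder-type estimates for the frozen derivatives $\nabla^\e_{\s,x_0}$, with constants depending only on $\l$, $\Lambda$, $p$ and the structural data. Transferring back, this gives, for $deg(\s)=p$, $|\nabla^\e_{\s,x_0}v(x)|\le C_p\,\r^{-p}\sup_{B_{\e,x_0}(x,\r/2)}v$; inserting the $p=0$ bound (which on $B_{\e,x_0}(x,\r/2)$ is $\approx\r^2/|B_{\e,x_0}(x,\r)|$) and using doubling yields $|\nabla^\e_{\s,x_0}v(x)|\le C_p\,d_{\e,x_0}(x,\x)^{2-p}\,/\,|B_{\e,x_0}(x,d_{\e,x_0}(x,\x))|$, which is \eqref{2}. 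The hard part throughout is the uniformity in $\e$: for each fixed $\e>0$ the operator $L_{\e,x_0}$ is uniformly elliptic in the Euclidean sense (since $\p_{2n}=\e^{-1}X^\e_{2n,x_0}$), but its Euclidean ellipticity constant is of order $\e^2$ and degenerates as $\e\to0$, so classical elliptic estimates are worthless in the limit. The remedy is never to invoke Euclidean ellipticity and to work exclusively in the $d_{\e,x_0}$-geometry, in which doubling, Poincar\'e, Harnack and the above rescaling all have $\e$-uniform constants; making the rescaling and the iterated subelliptic estimates precise, uniformly across $\e\in(0,1]$ and across all scales, is the technical core of \cite{CM} and of the present work.
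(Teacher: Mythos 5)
The paper offers no proof of this statement: it is imported verbatim from \cite[Theorem 1.1]{CM}, with the existence of $\G^\e_{x_0}$ taken from \cite{BLU}, so there is no internal argument to compare against. Your reconstruction --- constant-coefficient uniformly elliptic matrix after freezing, $\e$-uniform doubling and Poincar\'e for the $d_{\e,x_0}$-geometry with $|B_{\e,x_0}(x,r)|\approx r^{2n-1}\max(\e r,r^2)$, Gaussian heat-kernel bounds giving the $p=0$ case, and rescaled interior subelliptic estimates giving the derivative bounds --- is a faithful outline of the strategy actually carried out in \cite{CM} and \cite{BLU}, and correctly identifies the $\e$-uniformity as the technical core.
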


\begin{remark}\label{remp410}
With the same notation as in preceding theorem, from Lemma
\ref{duederiv}, and inequality (\ref{2}) it follows that
\begin {equation}\label{2u}
|\nabla^\e_{\s, \u}(x)\G^\e_{x_0} (\cdot,\x)|\le C_{p }
\frac{d_{\e, x_0} ^{2-p}(x,\x)}{|B_{\e,x_0} (x, d_{\e, x_0}
(x,\x))|},\quad deg(\s) = p
\end {equation}
for every $x, \x\in K$ with $x\not= \x$.
\end{remark}

Hence, using Proposition 2.20 and 2.21 in (\cite{CPP}) we have:

\begin{proposition}\label{diffgamma}
Let $k\in \N$, $2\le k \le 6$. Let $\u \in C^{k-1,\a}_{\u}(\O)$
and $K$ be  a compact subset of $\O$. There is a positive constant $C$
independent of $\e$, such that
 \begin{align}\nonumber
  &\left|\left(\nabla^\e_{\s, \u}(x) -
  \nabla^\e_{\s,  \u}(x_{0})\right)\G^\e_{x_{0}}
 (\cdot,\x)\right|\\ \label{e452}
 & \quad \le C \left(d_{\e, x_{0}}(x_{0},x)d_{\e, x_{0}}(x_{0},\x)^{-Q-deg(\s)+1} +
 d_{\e, x_{0}}(x_{0},x)^{\a}d_{\e, x_{0}}(x_{0},\x)^{-Q-deg(\s)+2}\right),
 \end{align}
 and
 \begin{align}\nonumber
  &\left|\nabla^\e_{\s, \u}(x)\G^\e_{x}(\cdot,\x) -
  \nabla^\e_{\s, \u}(x)(x_{0})\G^\e_{x_{0}}(\cdot,\x)\right|\\ \label{e446}
  & \quad\le C \left(d_{\e, x_{0}}(x_{0},x)d_{\e, x_{0}}(x_{0},\x)^{-Q-deg(\s)+1} +
  d_{\e, x_{0}}(x_{0},x)^{\a}d_{\e, x_{0}}(x_{0},\x)^{-Q-deg(\s)+2}\right),
 \end{align}
 for every multi-index $\s$, $deg(\s)=k$, and for every
 $x,x_{0}\in K$ and $\x$ such that  $d_{\e, x_{0}}(x_{0},\x)\ge M
  d_{\e, x_{0}}(x_{0},x)$, for suitable $M>0$. The constant $Q$ is
  the homogeneous dimension of the space, defined in
  (\ref{homodimen}).
\end{proposition}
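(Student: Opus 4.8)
The plan is to reduce both inequalities to the corresponding statements for the fundamental solution of the \emph{smooth} frozen operator $L_{\e,x_0}$, namely Propositions 2.20 and 2.21 of \cite{CPP}, the only new issue being that all constants must be independent of $\e$; this uniformity will come from Theorem \ref{fundam} and from the distance comparisons of Proposition \ref{p201}. Since the bounds are only asserted when $d_{\e,x_0}(x_0,\x)\ge M d_{\e,x_0}(x_0,x)$, the function $\G^\e_{x_0}(\cdot,\x)$ is smooth near both $x$ and $x_0$, so I may apply Lemma \ref{duederiv} to $\phi=\G^\e_{x_0}(\cdot,\x)$ (after a cut-off away from $\x$) and thereby write $\nabla^\e_{\s,\u}\G^\e_{x_0}(\cdot,\x)$ as $\nabla^\e_{\s,x_0}\G^\e_{x_0}(\cdot,\x)$ plus a finite sum of terms of the form $(\u-P^1_{x_0}\u)^{h}\big(\prod_i\nabla^\e_{\mu_i,\u}(\u-P^1_{x_0}\u)\big)\,\nabla^\e_{\rho,x_0}\G^\e_{x_0}(\cdot,\x)$ with $h\ge1$, $deg(\rho)-h\le deg(\s)$ and $\sum_i deg(\mu_i)\le k-1$.

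For \eqref{e452} I evaluate this expansion at $x$ and at $x_0$ and take the difference. The term $\nabla^\e_{\s,x_0}\G^\e_{x_0}(\cdot,\x)$ contributes $\nabla^\e_{\s,x_0}(x)\G^\e_{x_0}(\cdot,\x)-\nabla^\e_{\s,x_0}(x_0)\G^\e_{x_0}(\cdot,\x)$, which by the mean value inequality along a $d_{\e,x_0}$-geodesic (all of whose points stay at $d_{\e,x_0}$-distance $\simeq d_{\e,x_0}(x_0,\x)$ from $\x$) is bounded by $d_{\e,x_0}(x_0,x)$ times the supremum of $|\nabla^{\e}_{\rho',x_0}\G^\e_{x_0}(\cdot,\x)|$ with $deg(\rho')=deg(\s)+1$; Theorem \ref{fundam} and $|B_{\e,x_0}(x,r)|\simeq r^{Q}$ then give the first term on the right-hand side of \eqref{e452}. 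Each of the remaining terms has $h\ge1$, hence carries the factor $(\u-P^1_{x_0}\u)^{h}$ which \emph{vanishes at $x_0$} because $P^1_{x_0}\u(x_0)=\u(x_0)$; consequently such a term contributes only through its value at $x$, where I estimate the $\u$-factors by means of the Taylor expansion of Theorem \ref{t301} and the hypothesis $\u\in C^{k-1,\a}_\u(\O)$ — the factor $(\u-P^1_{x_0}\u)(x)^{h}$ being $O(d_{\e,\u}(x_0,x)^{h(1+\a)})$, and each $\nabla^\e_{\mu_i,\u}(\u-P^1_{x_0}\u)(x)$ being $O(d_{\e,\u}(x_0,x)^{\a})$ when $deg(\mu_i)=1$ and merely $O(1)$ when $deg(\mu_i)\ge2$ (since $\nabla^\e_{\mu_i,\u}$ then annihilates the degree-one polynomial $P^1_{x_0}\u$ and $\nabla^\e_{\mu_i,\u}\u$ is bounded) — while $|\nabla^\e_{\rho,x_0}(x)\G^\e_{x_0}(\cdot,\x)|\le C\,d_{\e,x_0}(x,\x)^{2-deg(\rho)-Q}$ by Theorem \ref{fundam}. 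Using Proposition \ref{p201} to replace $d_{\e,x_0}(x,\x)$ by $d_{\e,x_0}(x_0,\x)$ and to pass between $d_{\e,\u}$ and $d_{\e,x_0}$, and exploiting $deg(\rho)-h\le deg(\s)$ together with $d_{\e,x_0}(x_0,x)\le M^{-1}d_{\e,x_0}(x_0,\x)$, a short computation shows each of these terms is dominated by the second summand of \eqref{e452}.

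Inequality \eqref{e446} is proved in the same way, with one additional ingredient: the fundamental solution must now be compared at two different freezing points. Since $\u\in C^{1,\a}_\u(\O)$, the map $y\mapsto\nabla^\e_{\u}\u(y)$ is $\a$-Hölder for $d_{\e,\u}$ uniformly in $\e$, so the coefficients $a^\e_{ij}(\nabla^\e_{\u}\u(x))$ and $a^\e_{ij}(\nabla^\e_{\u}\u(x_0))$ of $L_{\e,x}$ and $L_{\e,x_0}$ differ by $O(d_{\e,\u}(x_0,x)^{\a})$; \cite[Proposition 2.21]{CPP}, whose constants are uniform in $\e$ thanks to the bounds of \cite{CM} recalled in Theorem \ref{fundam}, then provides the asserted bound for $\nabla^\e_{\rho,x_0}(x)\big(\G^\e_{x}(\cdot,\x)-\G^\e_{x_0}(\cdot,\x)\big)$, and combining this with the expansion argument used for \eqref{e452} (to convert $\nabla^\e_{\s,x_0}$ into $\nabla^\e_{\s,\u}$ and to absorb the $h\ge1$ terms) yields \eqref{e446}.

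I expect the main obstacle to be the degree bookkeeping in the terms produced by Lemma \ref{duederiv}: one must verify that the derivatives lost on the remainder $\u-P^1_{x_0}\u$ are always recovered from the powers $(\u-P^1_{x_0}\u)^{h}$ and from the extra smoothing $2-deg(\rho)$ carried by $\nabla^\e_{\rho,x_0}\G^\e_{x_0}$, so that no term exceeds the claimed homogeneity $-Q-deg(\s)+2$ in $d_{\e,x_0}(x_0,\x)$ while retaining a positive power of $d_{\e,x_0}(x_0,x)$ — exactly what the constraints $deg(\rho)-h\le deg(\s)$ and $\sum_i deg(\mu_i)\le k-1$, hence the hypothesis $\u\in C^{k-1,\a}_\u(\O)$ for $2\le k\le6$, are designed to guarantee. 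Ensuring that every constant is independent of $\e$ is the remaining point, and it is inherited step by step from Theorem \ref{fundam} and Proposition \ref{p201}.
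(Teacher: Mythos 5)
The paper itself gives no argument at this point: Proposition \ref{diffgamma} is simply imported from Propositions 2.20 and 2.21 of \cite{CPP}, with uniformity in $\e$ supplied by Theorem \ref{fundam} (i.e.\ \cite{CM}) and with the passage from frozen to nonlinear derivatives governed by Lemma \ref{duederiv} and Remark \ref{remp410}. Your reduction uses exactly these ingredients, so the route is the intended one, and your treatment of the leading term of \eqref{e452} (mean value along a horizontal path, whose points stay at distance comparable to $d_{\e,x_0}(x_0,\x)$ from $\x$ because of the hypothesis $d_{\e,x_0}(x_0,\x)\ge M d_{\e,x_0}(x_0,x)$, combined with \eqref{2}) is fine. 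For \eqref{e446} you should at least record that $L_{\e,x}$ and $L_{\e,x_0}$ differ not only in the coefficients $a^\e_{ij}(\nabla^\e_\u\u(\cdot))$ but also in the frozen field $X^\e_{2n-1,\cdot}$, i.e.\ through $P^1_x\u-P^1_{x_0}\u$, which is controlled by \eqref{tosta}; this is part of what the cited comparison of fundamental solutions must absorb.

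There is, however, a genuine flaw in your bookkeeping for \eqref{e452}: the claim that every correction term produced by Lemma \ref{duederiv} has $h\ge 1$ is not what the lemma gives. The inductive step creates terms with $h=0$; for $deg(\s)=2$ one gets $X^\e_{\s_1,\u}(\u-P^1_{x_0}\u)\,\p_{2n}\phi$, which still vanishes at $x_0$ (the factor is $O(d^{\a})$), so your ``value at $x$ only'' estimate survives there, but for $3\le deg(\s)\le 6$ one also gets terms of the form $\nabla^\e_{\mu,\u}(\u-P^1_{x_0}\u)\,\nabla^\e_{\rho,x_0}\phi$ with $deg(\mu)\ge 2$ and no small prefactor. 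Such a factor does \emph{not} vanish at $x_0$, and estimating only its value at $x$ yields a bound of order $d_{\e,x_0}(x_0,\x)^{2-deg(\rho)-Q}$ containing no positive power of $d_{\e,x_0}(x_0,x)$, which is not \eqref{e452}. For these terms you must genuinely take the difference between $x$ and $x_0$: split it as (increment of the $\u$-factor)$\times\nabla^\e_{\rho,x_0}(x)\G^\e_{x_0}(\cdot,\x)$ plus ($\u$-factor at $x_0$)$\times$(increment of $\nabla^\e_{\rho,x_0}\G^\e_{x_0}(\cdot,\x)$); the first increment is $O(d_{\e,x_0}(x_0,x)^{\a})$ because $\u\in C^{k-1,\a}_\u$ makes $\nabla^\e_{\mu,\u}\u$ H\"older and $\nabla^\e_{\mu,\u}P^1_{x_0}\u$ regular, while the second is handled by the same mean-value bound you used for the leading term. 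With this correction the degree count closes and the rest of your argument, including the uniformity in $\e$, stands.
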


Estimates of this type for the fundamental solution are the key
elements used in Proposition 3.9 in \cite{CPP} to prove the
following result:

%%%%%%%%%%%%%%%%%%%%%%%%%%%    proposizione  fondamentale  di CPP    %%%%%%%%%%%%%%%%%%%%%%%%%

\begin{proposition}\label{kerestim}
Let $k\in \N$, $2\le k \le 4$. Assume that $u$ is a function of class $C^{k-1}_\u(\Omega)$
and that there are open sets $\Omega_1\subset
\subset\Omega_2\subset \subset\Omega_3\subset \subset \Omega$ such
that for every $x\in \Omega_1$ the function $u$ admits the
following representation
\begin{equation}\label{representation7}
 \begin{split}
 u(x)  = & \int\limits_{\O}  \G^\e_{x_{0}}(x,\x)
     N_{1} (\x,x_0 )  d\x + \int\limits_{\O}  \G^\e_{x_{0}}(x,\x)
  N_{2,k}(\x,x_0)  d\x
    \\+& \sum_{i=1}^{2n}
 \int\limits_{\O}X^\e_{i, x_0} \G^\e_{x_{0}}(x,\x)
N_{2,k i}(\x,x_0)  d\x + \int\limits_{\O}  \G^\e_{x_{0}}(x,\x)
N_{3,k}(\x,x_0)  d\x.
\end{split}
\end{equation}
 Also assume that
for every $x_0$ fixed $\in \Omega_1$, the kernels $N_i(\cdot,
x_0)$ are supported in $\overline{ \Omega_3}$, as functions of
their first variable and there exists a constant $C_1$ such that
the kernels satisfy the following conditions:

 (i) if $x_0$ is fixed $\in \Omega_1$ the
 $N_{1}(\cdot,x_{0})$  is supported in $\overline{\Omega_3-\Omega_2} $
 \begin {equation}\label{N1k}
  \left|N_{1}(\x,x_{0}) - N_{1}(\x,x)\right| \le C_1\, d_{\e, x_{0}}^{\a}(x_{0},x);
\end{equation}

\medskip
 (ii) $N_{2,k}(\cdot,x_0)$ and $N_{2,ki}(\cdot,x_0)$ are  smooth  functions  and   all derivatives  are
uniformly  H\"older continuous in the variable $x_0$, satisfying
condition (\ref{N1k}) with the same constant $C_1$ as $N_1$;

\medskip

 (iii) for every $\x\in \Omega_3$ and $x, x_0\in \Omega_1$
\begin {equation}\label{N3}|N_{3,k} (\x, x_0)|\leq C_1 d^{k-2+\alpha}_{\e, x_0}(x_0, \x),
\end{equation}
and
\begin {equation}\label{N3bis}|N_{3,k} (\x, x_0) - N_{3,k} (\x, x)|\leq
C_1d^\alpha_{\e, x_0}(x_0, x)d^{k-2}_{\e, x_0}(x_0,
\x).\end{equation} Then $u\in C^{k}_\u$ and for every $\s$ such
that $deg(\s)=k$

\begin{equation}\label{representation8}
 \begin{split}
\nabla^\e_{\s, \bar u} (u\phi)(x_0)  = & \int\limits_{\O}
\nabla^\e_{\s,  \u} (x_0) \G^\e_{x_{0}}(\cdot,\x)
     N_{1} (\x,x_0)  d\x
\\& +\int\limits_{\O}  \G^\e_{x_{0}}(\x,0)
  \nabla^\e_{\s, \u}  (x_0) \big( N_{2,k}(x_0\circ \x^{-1},x_0)
  \big)d\xi
\\&+ \sum_{i=1}^{2n}
  \int\limits_{\O} X^\e_{i, x_0}(x_0) \G^\e_{x_{0}}(\x,0)
  \nabla^\e_{\s,  \u}  (x_0) \big( N_{2,ki}(x_0\circ \x^{-1},x_0)
  \big)d\x
\\&+ \int\limits_{\O} \nabla^\e_{\s,  \u} (x_0)
  \G^\e_{x_{0}}(x,\x) N_{3,k}(\x,x_0)d\x.
\end{split}
\end{equation}
 Besides, for any    $ \a'<\a$,
  there exists a
constant $C$ only dependent on $C_1$ and on   $C_p$ in
(\ref{2})  such that
 $$||u||_{C^{k,\a ' }_\u}\leq C.$$
 \end{proposition}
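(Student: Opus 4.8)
\medskip
\noindent\emph{Strategy of proof.} The plan is to obtain \eqref{representation8} by differentiating the representation \eqref{representation7} exactly $k$ times and then to read off, from the structure of the resulting formula, that $\nabla^\e_{\s,\u}(u\phi)$ is bounded and $\a'$--H\"older continuous with respect to $d_{\e,\u}$, with a modulus uniform in $\e$; by Definition \ref{d301} this gives $u\in C^{k,\a'}_\u\subset C^{k}_\u$ for every $\a'<\a$. (The cut--off $\phi$ is a fixed function equal to $1$ near $x_0$, inserted only to localize, and does not affect the derivatives at $x_0$.) The obstruction to a naive differentiation under the integral sign is that, for $\deg(\s)=k\ge 2$, Theorem \ref{fundam} together with Remark \ref{remp410} only gives $|\nabla^\e_{\s,\u}(x)\G^\e_{x_0}(\cdot,\x)|\le C\, d_{\e,x_0}(x,\x)^{2-k}/|B_{\e,x_0}(x,d_{\e,x_0}(x,\x))|$, a kernel of order $2-k-Q$ in $d_{\e,x_0}$, which is not locally integrable; so each of the four terms in \eqref{representation7} must be treated separately, exploiting the special structure of its kernel, and the comparison needed for H\"older continuity also involves the kernels $N_i(\cdot,x)$ and the fundamental solution $\G^\e_x$ frozen at a nearby point $x$.

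The term with $N_1$ is the simplest: since $x_0\in\Omega_1$ while $N_1(\cdot,x_0)$ is supported in $\overline{\Omega_3-\Omega_2}$, on the support of $N_1$ the kernel is evaluated away from the diagonal, hence smooth and uniformly bounded, and one differentiates directly to get the first line of \eqref{representation8}; the H\"older dependence on $x_0$ follows from \eqref{N1k} combined with estimate \eqref{e446} of Proposition \ref{diffgamma} (applied with $x$ close to $x_0$, where the condition $d_{\e,x_0}(x_0,\x)\ge M d_{\e,x_0}(x_0,x)$ holds automatically on the support of $N_1$). For the two terms with $N_{2,k}$ and $N_{2,ki}$ the kernels are smooth but do not vanish at $x_0$, so the integral of $\nabla^\e_{\s,\u}\G^\e_{x_0}$ against them would diverge; here one uses that the frozen operator $L_{\e,x_0}$ has constant coefficients and is left--invariant with respect to the homogeneous group law $\circ$, so that $\G^\e_{x_0}(x,\x)$ is a convolution kernel. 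After the unimodular change of variables $\x\mapsto x_0\circ\x^{-1}$ the point at which the derivatives are taken appears only inside the smooth argument of $N_{2,k}$ (resp. $N_{2,ki}$), so $\nabla^\e_{\s,\u}(x_0)$ can be transferred onto that kernel, yielding the second and third lines of \eqref{representation8}; since by hypothesis (ii) all derivatives of $N_{2,k},N_{2,ki}$ are uniformly H\"older in $x_0$, these contributions lie in $C^\a_\u$ uniformly in $\e$. In carrying out the transfer one first uses Lemma \ref{duederiv} to write $\nabla^\e_{\s,\u}$ in terms of the frozen derivatives $\nabla^\e_{\rho,x_0}$ plus correction operators carrying powers of $\u-P^1_{x_0}\u$; the leading frozen part commutes with the convolution, while the correction terms, having positive order of vanishing, produce absolutely convergent, controlled integrals.

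Finally, for the term with $N_{3,k}$ the vanishing condition (iii), $|N_{3,k}(\x,x_0)|\le C_1\, d_{\e,x_0}^{\,k-2+\a}(x_0,\x)$, exactly compensates the $d_{\e,x_0}^{\,2-k-Q}$ singularity of the differentiated fundamental solution: the product is bounded by $d_{\e,x_0}^{\,\a-Q}(x_0,\x)$, which is integrable near $x_0$, so dominated convergence justifies differentiating under the integral sign and gives the fourth line of \eqref{representation8}, while H\"older continuity in $x_0$ follows by the classical Schauder splitting of the domain into $\{d_{\e,x_0}(x_0,\x)\le M\, d_{\e,x_0}(x_0,x)\}$, estimated crudely, and its complement, estimated via \eqref{N3bis} and \eqref{e452}. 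Adding the four contributions yields \eqref{representation8} and, after passing from $d_{\e,x_0}$ to $d_{\e,\u}$ by Proposition \ref{p201}, the bound $\|u\|_{C^{k,\a'}_\u}\le C$ with $C$ depending only on $C_1$ and on the constants $C_p$ of \eqref{2}, hence independent of $\e$; the loss from $\a$ to any $\a'<\a$ is the usual one arising from combining the two terms in the estimates of Proposition \ref{diffgamma}. I expect the main technical obstacle to be, on one hand, making rigorous the convolution change of variables and the transfer of $\nabla^\e_{\s,\u}$ onto the smooth kernels while controlling, uniformly in $\e$, the $\u$--dependent correction terms produced by Lemma \ref{duederiv}, and, on the other hand, carrying out the Schauder--type estimates for the $N_{3,k}$ term so that all constants remain independent of $\e$ as $\e\to 0$, when the vector fields $X^\e_{i,x_0}$ degenerate to the sub--Riemannian ones.
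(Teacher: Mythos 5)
Your outline reproduces the standard parametrix argument that the paper itself does not write out but delegates to Proposition 3.9 of \cite{CPP}: term-by-term differentiation of \eqref{representation7}, using the off-diagonal smoothness for $N_1$, the group-convolution change of variables to transfer $\nabla^\e_{\s,\u}(x_0)$ onto the smooth kernels $N_{2,k}$ and $N_{2,ki}$ (cf.\ the remark following the proposition), and the compensated singularity plus the Schauder splitting via \eqref{e452}--\eqref{e446} for $N_{3,k}$ --- this is exactly the intended route. The one spot needing care is your phrase ``dominated convergence justifies differentiating under the integral sign'' for the $N_{3,k}$ term: since $N_{3,k}$ vanishes only at $x_0$ and not at nearby points $x$, the rigorous step is the difference-quotient argument of \cite{CPP} (Propositions 2.20--2.21), which compares $\G^\e_{x_0}$ with $\G^\e_{x}$ through \eqref{e446}; as you already invoke precisely those estimates elsewhere, this is a presentational rather than a substantive gap.
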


\begin {remark} The derivatives  $\nabla^\e_{\s,  \u} (x_0)$ in (\ref{representation8})
can be computed by
 Lemma \ref{duederiv} in term of the frozen derivatives $\nabla^\e_{\s', x_0}(x_0)$. In particular  the frozen derivatives
 $\nabla^\e_{\s, x_0}(x_0) ( h(\cdot\circ \xi^{-1} ))$ can be calculated by  formula in
 Proposition 2.23 in  \cite{CPP}.

\end {remark}
\bigskip

\begin{remark} \label{osservazione}
It is not difficult to prove that the same result is still true,
if $u$ has a more general representation
\begin{equation}
 \begin{split}
 u(x)  = & \int\limits_{\O}  \G^\e_{x_{0}}(x,\x)
     N_{1} (\x,x_0 )  d\x + \int\limits_{\O}  \G^\e_{x_{0}}(x,\x)
  N_{2,k}(\x,x_0)  d\x
    \\+& \sum_{i=1}^{2n}
 \int\limits_{\O}X^\e_{i, x_0} \G^\e_{x_{0}}(x,\x)
N_{2,k i}(\x,x_0)  d\x + \int\limits_{\O}  \G^\e_{x_{0}}(x,\x)
N_{3,k}(\x,x_0)  d\x
  \\+& \sum_{i=1}^{2n}
 \int\limits_{\O}X^\e_{i, x_0} \G^\e_{x_{0}}(x,\x) N_{4,k i}(\x,x_0)  d\x,
\end{split}
\end{equation}
where the kernels $N_{4,k i}(\x,x_0)$ satisfy assumptions similar
to $N_{3,k }:$

\smallskip
for every $\x\in \Omega_3$ and $x, x_0\in \Omega_1$
\begin {equation} |N_{4,k i} (\x, x_0)|\leq C_1 d^{k-1+\alpha}_{\e x_0}(x_0, \x),
\end{equation}
and
\begin {equation} |N_{4,ki} (\x, x_0) - N_{4,ki} (\x, x)|\leq
C_1d^\alpha_{\e, x_0}(x_0, x)d^{k-1}_{\e, x_0}(x_0,
\x).\end{equation}
\end {remark}

\section{From $Lip$ to  $C^{1,\a}_{\u}$.}

Let us now start the first step in  the proof of the regularity result. Using in full
strength the nonlinearity of the operator $L_\e$, we prove here
some Cacciopoli-type inequalities for the intrinsic gradient of
$u$, and for the derivative $\p_{2n}u.$ The main novelty of the
proof is  that putting together two intrinsic
subelliptic Cacciopoli inequalities we will end up with an
Euclidean Cacciopoli inequality. In this way we can obtain the
H\"older-regularity of the gradient via a standard Moser
procedure.

\bigskip
We first observe that   $$\p_{2n} X^\e_{i, u} u=-{(X^\e_{i,u})}^*
\p_{2n}u,$$ where ${(X^\e_{i, u})}^*$ is the $L^2$ adjoint of the
differential operator $X^\e_{i, u}$ and
\begin{equation}\label{adjoint}{(X^\e_{i, u})}^*=-X^\e_{i, u},\quad\text{ if}\quad
i=1,...,2n-2,2n, \text{ and } \quad {(X^\e_{2n-1, u})}^* = -X^\e_{2n-1,
u}-\p_{2n}u.\end{equation}

We now  prove  that if $u$ is a smooth solution of $L_{\e}u=0$ in $\Omega \subset \R^{2n}$
then its   derivatives $\p_{2n} u$ and $X^\e_{k, u} u$ are
solution of a similar mean curvature type  equation with different right
hand side:

%%%%%%%%%%%%%%%%%%%%%%    equaxioni %%%%%%%%%%%%%%%%%%%%%%%
\begin{lemma}\label{equ2n}
If $u$ is a smooth  solution of $L_{\e}u=0$ then $\omega=\p_{2n} u + 2 ||u||_{Lip}$
is a  solution of the equation
\begin{equation}\label{2nderiv}\sum_{i,j }
{(X^\e_{i, u})}^*  \Big( \frac{a_{ij}(\nabla_u^\e u)}{\sqrt{1
+|\nabla_{u}^\e u|^2}} {(X^\e_{j, u})}^* \omega \Big) =0,
\end{equation}
where $a_{ij}$ are defined in (\ref{nondiv}).
\end{lemma}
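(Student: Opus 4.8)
The plan is to differentiate the equation $L_\e u=0$ in the $\p_{2n}$-direction and then rewrite the resulting identity in the divergence form stated. First I would use the representation \eqref{MAINgeq2e} and apply $\p_{2n}$ to it. The key algebraic observation, already recorded in the excerpt, is that $\p_{2n}$ does \emph{not} commute with $X^\e_{2n-1,u}=\p_{2n-1}+u\p_{2n}$, but instead $[\,\p_{2n},X^\e_{i,u}\,]=0$ for $i\neq 2n-1$ and this commutator is controlled because $\p_{2n}u$ appears; the clean way to encode this is the adjoint identity $\p_{2n}X^\e_{i,u}u=-(X^\e_{i,u})^*\p_{2n}u$ together with \eqref{adjoint}. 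So differentiating
\[
0=\p_{2n}L_\e u=\p_{2n}\sum_{i=1}^{2n}X^\e_{i,u}\Bigl(\frac{X^\e_{i,u}u}{\sqrt{1+|\nabla^\e_u u|^2}}\Bigr)
\]
and pushing the outer $\p_{2n}$ inside via the adjoint relation turns the outer $X^\e_{i,u}$ into $-(X^\e_{i,u})^*$ acting on $\p_{2n}$ of the flux. Then I would commute $\p_{2n}$ through the inner flux $X^\e_{i,u}u/\sqrt{1+|\nabla^\e_u u|^2}$: using $\p_{2n}X^\e_{i,u}u=-(X^\e_{i,u})^*\p_{2n}u$ again on the numerator, and the chain rule on the scalar factor $1/\sqrt{1+|\nabla^\e_u u|^2}$, whose $\p_{2n}$-derivative produces exactly the combination $-\sum_k (X^\e_{k,u}u)\,(X^\e_{k,u})^*\p_{2n}u/(1+|\nabla^\e_u u|^2)^{3/2}$. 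Collecting the two contributions gives precisely the matrix $a_{ij}(\nabla^\e_u u)=\delta_{ij}-\frac{(X^\e_{i,u}u)(X^\e_{j,u}u)}{1+|\nabla^\e_u u|^2}$ divided by $\sqrt{1+|\nabla^\e_u u|^2}$, so that
\[
\sum_{i,j}(X^\e_{i,u})^*\Bigl(\frac{a_{ij}(\nabla^\e_u u)}{\sqrt{1+|\nabla^\e_u u|^2}}(X^\e_{j,u})^*\p_{2n}u\Bigr)=0 .
\]

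Next I would observe that adding the constant $2\|u\|_{Lip}$ to $\p_{2n}u$ does not affect the equation: $(X^\e_{j,u})^*$ applied to a constant $c$ is $-X^\e_{j,u}c-\delta_{j,2n-1}(\p_{2n}u)c$, which is not zero because of the zeroth-order part of the adjoint of $X^\e_{2n-1,u}$; so one has to check that the extra terms generated by the constant cancel among themselves. In fact $(X^\e_{j,u})^*c=-\delta_{j,2n-1}c\,\p_{2n}u$, and plugging $\o=\p_{2n}u+2\|u\|_{Lip}$ into \eqref{2nderiv} the difference from the $\p_{2n}u$-equation is $\sum_{i,j}(X^\e_{i,u})^*\bigl(\frac{a_{ij}}{\sqrt{1+|\nabla^\e_u u|^2}}(X^\e_{j,u})^*(2\|u\|_{Lip})\bigr)$, which after expanding $(X^\e_{j,u})^*$ on the constant is again a genuine divergence-form expression; the point of shifting by the constant is not that the shift is in the kernel but that one wants $\o\ge\|u\|_{Lip}>0$ (so $\o$ is a positive supersolution-type quantity) for the later Moser/Caccioppoli iteration. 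So strictly, I would verify the identity \eqref{2nderiv} literally for $\o=\p_{2n}u+2\|u\|_{Lip}$ by direct substitution, keeping the constant's contribution through the nontrivial adjoint, and check that the algebra still closes; the ellipticity bound $0<\lambda\,\mathrm{Id}\le (a_{ij})\le\mathrm{Id}$ uniform in $\e$ comes for free from the explicit formula for $a_{ij}$ together with the uniform Lipschitz bound on $u$.

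The main obstacle I anticipate is purely bookkeeping: carefully tracking the zeroth-order terms in $(X^\e_{2n-1,u})^*=-X^\e_{2n-1,u}-\p_{2n}u$ through both the differentiation step and the constant-shift step, since these are exactly the terms that make the computation genuinely different from the smooth-coefficient case and where sign errors or dropped terms would break the divergence structure. A secondary subtlety is justifying that, since $u$ is assumed \emph{smooth} here, all the manipulations (differentiating the PDE, integrating by parts to move $\p_{2n}$) are legitimate pointwise identities — this is why the lemma is stated for smooth solutions $u_\e$ of \eqref{MAINgeq2e} and the constant $\|u\|_{Lip}$ is finite. Once the identity is established, it is the starting point for the two intrinsic Caccioppoli inequalities mentioned in the outline of Step 1.
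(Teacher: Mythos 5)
Your proposal follows essentially the same route as the paper's proof: differentiate $L_\e u=0$ in the $\p_{2n}$ direction, convert both the outer and the inner derivatives via the operator identity $\p_{2n}X^\e_{i,u}=-(X^\e_{i,u})^*\p_{2n}$ from \eqref{adjoint}, and recognize the resulting coefficient matrix as $a_{ij}(\nabla^\e_u u)/\sqrt{1+|\nabla^\e_u u|^2}$. Your worry about the additive constant is well taken rather than a defect of your argument: the paper's own proof in fact only establishes \eqref{2nderiv} for $\p_{2n}u$ and never accounts for $(X^\e_{2n-1,u})^*\bigl(2\|u\|_{Lip}\bigr)=-2\|u\|_{Lip}\,\p_{2n}u$ arising from the zeroth-order part of the adjoint, a contribution which does not cancel identically; so the identity should be read as the equation satisfied by $\p_{2n}u$, the shift by $2\|u\|_{Lip}$ serving only to make $\omega$ bounded away from zero in the subsequent Caccioppoli--Moser arguments.
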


\begin{proof}
Differentiating the equation $L_{\e}u=0$ with respect to $\p_{2n}$
we obtain
$$\p_{2n}\Big(X^\e_{i, u}
\Big(\frac{X^\e_{i, u}u}{\sqrt{1 + |\nabla_{ u}^\e u
|^2}}\Big)\Big) = 0$$ Using the previous remark
$${(X^\e_{i, u})}^*\Big(\p_{2n} \Big(\frac{X^\e_{i, u}u}{\sqrt{1 + |\nabla_{
u}^\e u|^2}}\Big)\Big) = 0$$
Note that $$\p_{2n}\Big(\frac{X^\e_{i, u}u}{\sqrt{1 + |\nabla_{
u}^\e u |^2}}\Big)= \frac{\p_{2n}X^\e_{i, u}u}{\sqrt{1 + |\nabla_{
u}^\e u |^2}} - \frac{X^\e_{i, u}u\,X^\e_{j, u}u \,\p_{2n}X^\e_{j,
u}u}{(1 + |\nabla_{u}^\e u |^2)^{3/2}}  $$
$$=-\frac{({X^\e_{i, u})}^*\p_{2n}u} {\sqrt{1 + |\nabla_{ u}^\e u |^2}} +
\frac{X^\e_{i, u}u\,X^\e_{j, u}u \,{(X^\e_{j, u})}^*\p_{2n}u}{(1 +
|\nabla_{ u}^\e u |^2)^{3/2}}.$$ The result  follows immediately.
\end{proof}

\bigskip
Differentiating the equation $L_{\e}u=0$  with respect to
$X^\e_{k, u}$ we obtain

\begin{lemma} If $u$ is a smooth  solution of $L_{\e}u=0$ then $z=X^\e_{k, u} u+ 2||u||_{Lip}$
with $k\leq 2n-1$
is a solution of the equation
\begin{equation}\label{PDEXu}\sum_{i,j}
X^\e_{i,u}  \Big( \frac{a_{ij}(\nabla^\e_u u)}{\sqrt{1
+|\nabla^\e_{u} u|^2}} X^\e_{j, u}z \Big) =$$$$=-\sum_{i }[X^\e_{k, u},
X^\e_{i, u}] \Big(\frac{X^\e_{i, u}u}{\sqrt{1 + |\nabla_{ u}^\e u
|^2}}\Big) - \sum_{i,j} X^\e_{i, u} \Big(\frac{a_{ij}(\nabla^\e_u u)}{\sqrt{1
+|\nabla^\e_{u} u|^2}} [X^\e_{k, u}, X^\e_{j, u}] u\Big),
\end{equation}
where $a_{ij}$ are defined in (\ref{nondiv}).
\end{lemma}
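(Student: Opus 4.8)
The plan is simply to carry out the differentiation announced just before the statement. Note first that since $z-X^\e_{k,u}u=2\|u\|_{Lip}$ is a constant we have $X^\e_{j,u}z=X^\e_{j,u}X^\e_{k,u}u$ for every $j$, so the additive constant plays no role in the computation — it is carried only so that $z$ will be bounded below by a positive constant in the Caccioppoli/Moser estimates of the next section. Starting from the divergence form $\sum_i X^\e_{i,u}\big(X^\e_{i,u}u/\sqrt{1+|\nabla^\e_u u|^2}\big)=0$, I would apply $X^\e_{k,u}$ and move it inside the outer derivative at the price of a commutator:
\[
\sum_i X^\e_{i,u}\,X^\e_{k,u}\!\left(\frac{X^\e_{i,u}u}{\sqrt{1+|\nabla^\e_u u|^2}}\right)
=-\sum_i [X^\e_{k,u},X^\e_{i,u}]\!\left(\frac{X^\e_{i,u}u}{\sqrt{1+|\nabla^\e_u u|^2}}\right).
\]

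The key step is the pointwise identity
\[
X^\e_{k,u}\!\left(\frac{X^\e_{i,u}u}{\sqrt{1+|\nabla^\e_u u|^2}}\right)
=\sum_j \frac{a_{ij}(\nabla^\e_u u)}{\sqrt{1+|\nabla^\e_u u|^2}}\Big(X^\e_{j,u}z+[X^\e_{k,u},X^\e_{j,u}]u\Big),
\]
which I would obtain exactly as in the proof of Lemma~\ref{equ2n}: expand the left-hand side by the quotient and chain rules, substitute $X^\e_{k,u}X^\e_{m,u}u=X^\e_{m,u}z+[X^\e_{k,u},X^\e_{m,u}]u$ for each second derivative, and recognize the resulting combination of a $\delta_{ij}/\sqrt{1+|\nabla^\e_u u|^2}$ term and an $(X^\e_{i,u}u)(X^\e_{j,u}u)/(1+|\nabla^\e_u u|^2)^{3/2}$ term as $\sum_j a_{ij}(\nabla^\e_u u)/\sqrt{1+|\nabla^\e_u u|^2}$ applied to $X^\e_{j,u}z+[X^\e_{k,u},X^\e_{j,u}]u$, using the definition $a_{ij}(p)=\delta_{ij}-p_ip_j/(1+|p|^2)$ from \eqref{nondiv}.

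Inserting this identity into the previous display and splitting the $z$-contribution off from the commutator contribution inside the inner bracket gives
\begin{align*}
&\sum_{i,j} X^\e_{i,u}\!\left(\frac{a_{ij}(\nabla^\e_u u)}{\sqrt{1+|\nabla^\e_u u|^2}}X^\e_{j,u}z\right)
+\sum_{i,j} X^\e_{i,u}\!\left(\frac{a_{ij}(\nabla^\e_u u)}{\sqrt{1+|\nabla^\e_u u|^2}}[X^\e_{k,u},X^\e_{j,u}]u\right)\\
&\qquad\qquad +\sum_i [X^\e_{k,u},X^\e_{i,u}]\!\left(\frac{X^\e_{i,u}u}{\sqrt{1+|\nabla^\e_u u|^2}}\right)=0,
\end{align*}
and moving the last two sums to the right-hand side is exactly \eqref{PDEXu}. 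I do not expect a genuine obstacle here: the computation is essentially bookkeeping. The one point that needs care — and the only real difference from Lemma~\ref{equ2n}, where differentiation by the transverse field $\p_{2n}$ produced a homogeneous equation for $\omega$ — is that the horizontal commutators $[X^\e_{k,u},X^\e_{i,u}]$ do not vanish (they are first-order operators whose coefficients are components of $\nabla^\e_u u$, since only $X^\e_{2n-1,u}$ carries a $u$-dependent coefficient), and it is precisely these commutator terms that form the inhomogeneity on the right of \eqref{PDEXu}; the hypothesis $k\le 2n-1$ is what makes $X^\e_{k,u}=X_{k,u}$ have the form used in this argument, the remaining index $2n$ being the case already treated in Lemma~\ref{equ2n}.
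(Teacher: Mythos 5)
Your proposal is correct and follows the same route as the paper: apply $X^\e_{k,u}$ to the equation, exchange it with the outer derivative at the cost of the commutator $[X^\e_{k,u},X^\e_{i,u}]$, and then rewrite the inner derivative of the flux via $X^\e_{k,u}X^\e_{j,u}u=X^\e_{j,u}z+[X^\e_{k,u},X^\e_{j,u}]u$ to expose the coefficient $a_{ij}(\nabla^\e_u u)/\sqrt{1+|\nabla^\e_u u|^2}$. The pointwise identity you isolate is exactly the displayed computation in the paper's proof, so there is nothing to add.
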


\bigskip

\begin{proof}
Differentiating the equation $L_{\e}u=0$ with respect to $X^\e_{k,
u}$ we obtain
$$X^\e_{k, u}\Big(X^\e_{i, u}
\Big(\frac{X^\e_{i, u}u}{\sqrt{1 + |\nabla_{ u}^\e u
|^2}}\Big)\Big) = 0$$
$$[X^\e_{k, u}, X^\e_{i, u}]
\Big(\frac{X_{i, u}u}{\sqrt{1 + |\nabla_{  u}^\e u |^2}}\Big) +
X^\e_{i, u}\Big( X^\e_{k, u}\Big(\frac{X^\e_{i, u}u}{\sqrt{1 +
|\nabla_{ u}^\e u|^2}}\Big)\Big) = 0$$
Note that $$ X^\e_{k, u}\Big(\frac{X^\e_{i, u}u}{\sqrt{1 +
|\nabla_{ u}^\e u |^2}}\Big)= \frac{X^\e_{k, u}X^\e_{i,
u}u}{\sqrt{1 + |\nabla_{ u}^\e u |^2}} - \frac{X^\e_{i,
u}u\,X^\e_{j, u}u \,X^\e_{k, u}X^\e_{j, u}u}{(1 + |\nabla_{u}^\e u
|^2)^{3/2}}  =$$
 $$ = \frac{a_{ij}(\nabla^\e_uu)}{(1 +
|\nabla_{u}^\e u |^2)^{1/2}} ([X^\e_{k, u}X^\e_{j, u}]u + X^\e_{j,
u}z)
$$
concluding the proof.
\end{proof}

\bigskip

%\begin{remark}\label{calcolobraket}
%If $k<2n-1$  then the commutators have the expression
%$$[X^\e_{k, u}, X^\e_{i, u}]= {sign(k-i)}\p_{2n} \text{ if
%} i=k+n-1; \quad [X^\e_{k, u}, X^\e_{i, u}] = X^\e_{k, u}u \,
%\p_{2n} \text{ if } i=2n-1,$$

%
%\medskip

%If $k=2n-1$  then
%$$[X^\e_{k, u}, X^\e_{i, u}]= -X^\e_{i, u}u\p_{2n} $$

%\medskip

%If $k=2n$  then
%$$[X^\e_{k, u}, X^\e_{i, u}]= 0 \text{ if
%} i \not= 2n-1; \quad [X^\e_{k, u}, X^\e_{i, u}] = X^\e_{k, u}u \,
%\p_{2n} \text{ if } i=2n-1,$$

%\end{remark}

\begin{remark}\label{calcolobraket}
It is useful to compute explicitly the commutators that appear in the previous result.

If $k\le n-1$ and $i=k+n-1$  or $i\le n-1$ and $k=i+n-1$, then
$$[X^\e_{k, u}, X^\e_{i, u}]= {sign(k-i)}\p_{2n};$$

If $i=2n-1$ and $k<2n-1$, then 
$$ [X^\e_{k, u}, X^\e_{i, u}] = X^\e_{k, u}u \,
\p_{2n};$$

\medskip

If $k=2n-1$ and $i\neq 2n-1$  then
$$[X^\e_{k, u}, X^\e_{i, u}]= -X^\e_{i, u}u\p_{2n} $$

\medskip

If $k=2n$  and $i=2n-1$ then
$$ [X^\e_{k, u}, X^\e_{i, u}] = X^\e_{k, u}u \,
\p_{2n}.$$

All other commutators vanish.
As a consequence, if $u$ is a smooth solution of $L_\e u=0$ then
$|[X^\e_{k, u}, X^\e_{i, u}]|$ is always bounded by $1+||u||_{Lip}^2$.
\end{remark}

\bigskip
%%%%%%%%%%   Cacciopoli inequality for $\p_{2n}u$ and $ X_ku$   %%%%

\begin{proposition} \label{cacciopolimisto}(First Cacciopoli type inequality  for $X^\e_{k, u}u$ 
) If $u$ is a smooth solution of $L_{\e}u=0$ in $\Omega \subset \R^{2n}$, and $z = X^\e_{k, u}u + 2
||u||_{Lip}$ with $k\leq 2n$ then for every $p\neq 2$ there exists a
constant $C$, only dependent on the bounds on the spatial gradient
and on $p$  such that for every $\phi \in C^{\infty}_0$

$$\int |\p_{2n}z |^2 {z}^{p-2} \phi^2 \leq
C\, \left(\int |\nabla_u^\e z |^2 z^{p-2} \phi^2 + \int z^p(\phi^2
+ |\nabla_u^\e\phi|^2)\right).$$

The constant $C$ is bounded if $p$ is bounded away from $2$.
 If $p=2$ the 
inequality holds in the form
$$
\int |\p_{2n}z |^2  \phi^2 \leq
C\,  \int z^p(\phi^2
+ |\nabla_u^\e\phi|^2).$$

\end{proposition}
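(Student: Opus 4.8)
The plan is to test the PDE \eqref{2nderiv} satisfied by $\omega = \p_{2n}u + 2\|u\|_{Lip}$ (Lemma \ref{equ2n}) against a suitable power of $z$ times a cutoff, and to exploit the fact that $\omega$ differs from $\p_{2n}z$ by a controlled amount. First I would record the elementary but crucial observation that $\p_{2n} z = \p_{2n} X^\e_{k,u}u = X^\e_{k,u}\p_{2n}u + [\p_{2n},X^\e_{k,u}]u = X^\e_{k,u}\omega$ for $k \le 2n-1$ (the commutator $[\p_{2n},X^\e_{k,u}]$ vanishes since the coefficients of $X^\e_{k,u}$ do not depend on $x_{2n}$; for $k=2n$ one has $\p_{2n}z = \e\,\p_{2n}^2 u = \e\,\p_{2n}\omega$, which is even more directly controlled). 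Consequently $\p_{2n} z$ is itself an intrinsic derivative of $\omega$, and the left-hand side of the claimed inequality is $\int |X^\e_{k,u}\omega|^2 z^{p-2}\phi^2$, i.e. part of a horizontal energy of $\omega$.

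Next I would write down the weak formulation of \eqref{2nderiv}: for any test function $\psi$,
\begin{equation*}
\sum_{i,j}\int \frac{a_{ij}(\nabla^\e_u u)}{\sqrt{1+|\nabla^\e_u u|^2}}\,(X^\e_{j,u})^*\omega\,(X^\e_{i,u})^*\psi = 0,
\end{equation*}
and choose $\psi = \omega\, z^{p-2}\phi^2$ (for $p=2$, $\psi = \omega\phi^2$; here one uses that by Step-1 type bounds, or by the running hypothesis that $u$ is smooth, $z$ is bounded and bounded away from $0$ after the normalization by $2\|u\|_{Lip}$, so the powers $z^{p-2}$ are legitimate). Expanding $(X^\e_{i,u})^*\psi$ by the Leibniz rule produces the ``good'' term $\sum_{i,j}\int \frac{a_{ij}}{\sqrt{1+|\nabla^\e_u u|^2}}(X^\e_{j,u})^*\omega\,(X^\e_{i,u})^*\omega\, z^{p-2}\phi^2$, which by uniform ellipticity of $(a_{ij})$ (note $a_{ij}(p)\xi_i\xi_j \ge (1+|p|^2)^{-1}|\xi|^2$, and $|\nabla^\e_u u|$ is bounded by the gradient bound) dominates a constant times $\int |\nabla^\e_u \omega|^2 z^{p-2}\phi^2$, plus ``error'' terms where a derivative falls on $z^{p-2}$ or on $\phi^2$. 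Since the $a_{ij}$ are bounded and $(X^\e_{i,u})^*$ differs from $-X^\e_{i,u}$ only by the bounded zero-order term $\p_{2n}u$ (see \eqref{adjoint}), and $X^\e_{i,u}(z^{p-2}) = (p-2)z^{p-3}X^\e_{i,u}z$, the error terms are estimated by Cauchy--Schwarz and absorbed: they are bounded by $\delta\int|\nabla^\e_u\omega|^2 z^{p-2}\phi^2 + C_\delta(\int |\nabla^\e_u z|^2 z^{p-2}\phi^2 + \int z^p(\phi^2 + |\nabla^\e_u\phi|^2))$, using also $|\omega| \le |z| + C$ and $z \ge \|u\|_{Lip}$ to replace stray factors of $\omega$ by $z$ up to constants. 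The factor $(p-2)$ is why the constant blows up as $p\to 2$ and why for $p=2$ that particular error term is absent, yielding the cleaner inequality.

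The point that requires care—and which I expect to be the main obstacle—is the bookkeeping of the term $\sum_{i,j}\int \frac{a_{ij}}{\sqrt{1+|\nabla^\e_u u|^2}}(X^\e_{j,u})^*\omega\, \omega\, X^\e_{i,u}(z^{p-2})\phi^2$ together with the commutator contributions: expanding $(X^\e_{j,u})^*\omega = -X^\e_{j,u}\omega - \delta_{j,2n-1}(\p_{2n}u)\omega$ and then recognizing $X^\e_{j,u}\omega$ — one wants the left side of the conclusion, $\int|\p_{2n}z|^2 z^{p-2}\phi^2 = \int|X^\e_{k,u}\omega|^2 z^{p-2}\phi^2$, to appear on the \emph{good} side (with a favorable sign) rather than needing to be controlled. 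This is arranged by noting that the quadratic form $\sum_{i,j}a_{ij}\zeta_i\zeta_j$ with $\zeta = (X^\e_{1,u}\omega,\dots,X^\e_{2n,u}\omega)$ already contains $|\zeta|^2$ up to the uniformly bounded correction $-\frac{(p\cdot\zeta)^2}{1+|p|^2}$, hence controls $|X^\e_{k,u}\omega|^2$ from below after absorbing that correction into $|\nabla^\e_u\omega|^2$; thus the single inequality simultaneously produces $\int|\p_{2n}z|^2 z^{p-2}\phi^2$ on the left and only $\int|\nabla^\e_u z|^2 z^{p-2}\phi^2$ and $\int z^p(\phi^2+|\nabla^\e_u\phi|^2)$ on the right. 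Finally one checks that all constants depend only on $\|\nabla_E u\|_\infty$ and on $p$ (through $(p-2)^{-1}$), uniformly in $\e$, since the ellipticity constants of $(a_{ij})$ and the size of $|\nabla^\e_u u|$, $\p_{2n}u$, and the commutators (Remark \ref{calcolobraket}) are all controlled by $1+\|u\|_{Lip}^2$; this gives the stated inequality, and the $p=2$ case follows by the same computation omitting the $X^\e_{i,u}(z^{p-2})$ term.
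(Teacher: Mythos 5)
Your argument is essentially the paper's proof: both test the divergence-form equation for $\omega=\p_{2n}u+2||u||_{Lip}$ from Lemma \ref{equ2n} against $\omega\, z^{p-2}\phi^2$, obtain a weighted Caccioppoli inequality for $\nabla^\e_u\omega$ (the paper's inequality (\ref{XP2n})), and then pass to the stated left-hand side via the relation between $\p_{2n}z$ and $X^\e_{k,u}\omega$. One step of your write-up is stated incorrectly, though harmlessly so: for $k=2n-1$ the commutator $[\p_{2n},X^\e_{2n-1,u}]$ does \emph{not} vanish, since the coefficient of $X^\e_{2n-1,u}=\p_{2n-1}+u\,\p_{2n}$ is $u$ itself, which depends on $x_{2n}$; the correct identity is $\p_{2n}z=X^\e_{2n-1,u}\omega+(\p_{2n}u)^2$. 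The extra term $(\p_{2n}u)^2$ is bounded by $||u||_{Lip}^2$, and since $z$ is bounded below its contribution is absorbed into $C\int z^p\phi^2$ --- this is precisely the case the paper handles separately at the end of its proof --- so the conclusion is unaffected.
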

\begin{proof}  Calling $ \omega=\p_{2n} u + 2
||u||_{Lip}$, we have
 $$\int |\nabla_u^\e
\omega |^2 z^{p-2} \phi^2 \leq  C\, \int \frac{a_{ij}(\nabla_u^\e
u)}{\sqrt{1 +|\nabla_{u}^\e u|^2}}X^\e_{i, u}\omega X^\e_{j, u}
\omega z^{p-2}\phi^2=$$
$$=-C\int \frac{a_{ij}(\nabla_u^\e u)}{\sqrt{1
+|\nabla_{u}^\e u|^2}}(X^\e_{i, u})^*\omega \,X^\e_{j, u} \omega\,
z^{p-2}\,\phi^2-C\int \frac{a_{2n-1\, j}(\nabla_u^\e u)}{\sqrt{1
+|\nabla_{u}^\e u|^2}} X^\e_{j, u} \omega\,  \omega \p_{2n}u
\,z^{p-2}\phi^2=$$ (integrating by parts $X^\e_{j, u}$ in the
first integral )
$$=  C\int (X^\e_{j, u})^*\Big(
\frac{a_{ij}(\nabla_u^\e u)}{\sqrt{1 +|\nabla_{u}^\e
u|^2}}(X^\e_{i, u})^*\omega \Big) \omega  z^{p-2}\phi^2 + C\int
\frac{a_{2n-1\, j}(\nabla_u^\e u)}{\sqrt{1 +|\nabla_{u}^\e
u|^2}}(X^\e_{i, u})^*\omega  \, \omega \p_{2n}u \, z^{p-2}\phi^2$$
$$+ C(p-2)\int
\frac{a_{ij}(\nabla_u^\e u)}{\sqrt{1 +|\nabla_{u}^\e
u|^2}}(X^\e_{i, u})^*\omega \ \omega X^\e_{j, u} z \, z^{p-3}\phi^2 +
2C\int \frac{a_{ij}(\nabla_u^\e u)}{\sqrt{1 +|\nabla_{u}^\e
u|^2}}(X^\e_{i, u})^*\omega \ \omega  z^{p-2}\phi X^\e_{j, u}\phi $$
$$-C\int
\frac{a_{2n-1\,j}(\nabla_u^\e u)}{\sqrt{1 +|\nabla_{u}^\e u|^2}}
X^\e_{j, u} \omega \,\omega\p_{2n}u z^{p-2}\phi^2.$$ The first
integral vanishes by Lemma \ref{equ2n}. In the other integrals we
can use the fact that $$\Big|\frac{a_{ij}(\nabla_u^\e u)}{\sqrt{1
+|\nabla_{u}^\e u|^2}}\Big|\leq 1 \quad \text{ and } |(X^\e_{i,
u})^*\omega|  \leq (1+||u||_{Lip})|\nabla^\e_{u}\omega|$$ where
$||u||_{Lip}$ is bounded uniformly in $\e$ by assumption. Then
 $$\int |\nabla_u^\e
\omega |^2 z^{p-2} \phi^2 \leq C\Big(\int |\nabla_u^\e \omega | z^{p-2}
(\phi^2+ |\phi \nabla^\e_{ u}\phi|) + (p-2)\int |\nabla_u^\e \omega |
|\nabla_u^\e z | z^{p-3} \phi^2\Big)$$ (by H\"older inequality and the
fact that $z$ is bounded away from 0)
$$
\leq\delta \int |\nabla_u^\e \omega |^2z^{p-2}\phi^2 + C(\delta)
\int |\nabla_u^\e z |^2z^{p-2}\phi^2+ C(\delta) \int z^p (\phi^2 +
|\nabla_u^\e \phi|^2).$$ For   $\delta$  sufficiently small this
implies that \begin{equation}\label{XP2n}\int |\nabla_u^\e \omega
|^2 z^{p-2} \phi^2 \leq C \int |\nabla_u^\e z |^2z^{p-2}\phi^2+ C
\int z^p (\phi^2 + |\nabla_u^\e \phi|^2)\end{equation}
The constant $C$ above is bounded as long as $p$ is away from 2.
The special case $p=2$ follows along similar computations.
 
Note that, if  $k\not= 2n-1,$ we have  $$|\p_{2n} z| =
  |\p_{2n} X^\e_{k, u}u|=|X^\e_{k, u}\p_{2n} u|
  \leq|\nabla_u^\e \omega|$$
If $k = 2n-1,$ we have $$|\p_{2n} z| = |\p_{2n} X^\e_{2n-1, u}u|=
  |X^\e_{2n-1, u}\p_{2n} u| + |\p_{2n} u|^2
  \leq|\nabla_u^\e \omega| +  |\p_{2n} u|^2$$
  Hence, using again the boundness of $|\p_{2n} u|$ and the fact
  that $z$ is bounded from below, together with inequality
  (\ref{XP2n}) we conclude the proof.
\end{proof}

\bigskip

%%%%%%%%   (Cacciopoli inequality for $X^\e_{k, u}u,$ $k\leq 2n-1$) %%%%%
\begin{proposition}\label{Xcacciopoli} (Intrinsic Cacciopoli type inequality for $X^\e_{k, u}u$ )
If $u$ is a smooth  solution of $L_{\e}u=0$ in $\Omega \subset \R^{2n}$ and $z= X^\e_{k, u}u + 2
||u||_{Lip} $, with $k\leq 2n$, then for every $p\neq 1$ there exists
a constant $C>0$, only dependent on the bounds on the spatial
gradient and on $p$ such that for every $\phi \in C^{\infty}_0$
$$\int |\nabla_u^\e z |^2 z^{p-2} \phi^2 \leq C \int z^p (\phi^2 +
|\nabla_u^\e \phi|^2+  |\phi   \p_{2n} \phi|).$$
The constant $C$ is bounded if $p$ is bounded away from the values $1$ and $2$.
\end{proposition}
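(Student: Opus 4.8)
The plan is to imitate the proof of Proposition~\ref{cacciopolimisto}, but this time testing the equation \eqref{PDEXu} satisfied by $z=X^\e_{k,u}u+2||u||_{Lip}$ against $\psi=z^{p-1}\phi^2$, and then disposing of the vertical derivatives that appear by invoking the first Cacciopoli inequality. Since $u$ is a smooth solution, $z$ is smooth and $\psi\in C^\infty_0$, so all the integrations by parts below are legitimate; moreover $z$ is bounded above and below by positive constants uniformly in $\e$, and $|\nabla^\e_u u|\le C$ uniformly in $\e$ by the Lipschitz assumption, so the matrix $a_{ij}(\nabla^\e_u u)/\sqrt{1+|\nabla^\e_u u|^2}$ is uniformly elliptic.

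First I would integrate by parts the left-hand side of \eqref{PDEXu} against $\psi$, using the adjoint formula \eqref{adjoint}. The principal contribution is
$$(p-1)\int\sum_{i,j}\frac{a_{ij}(\nabla^\e_u u)}{\sqrt{1+|\nabla^\e_u u|^2}}\,X^\e_{i,u}z\,X^\e_{j,u}z\;z^{p-2}\phi^2,$$
which by ellipticity dominates, in absolute value, $c\,|p-1|\int|\nabla^\e_u z|^2z^{p-2}\phi^2$ (this is where $p$ must be kept away from $1$). Besides this one obtains a cross term $\int\frac{a_{ij}}{\sqrt{1+|\nabla^\e_u u|^2}}X^\e_{j,u}z\,X^\e_{i,u}\phi\,z^{p-1}\phi$ and an extra term $\int\frac{a_{2n-1,j}}{\sqrt{1+|\nabla^\e_u u|^2}}X^\e_{j,u}z\,z^{p-1}\phi^2\,\p_{2n}u$ coming from the non-selfadjointness of $X^\e_{2n-1,u}$; using $|a_{ij}/\sqrt{1+|\nabla^\e_u u|^2}|\le 1$, Young's inequality and $z\simeq$ const, both are bounded by $\delta\int|\nabla^\e_u z|^2z^{p-2}\phi^2+C(\delta)\int z^p(\phi^2+|\nabla^\e_u\phi|^2)$.

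Next I would treat the right-hand side of \eqref{PDEXu}. By Remark~\ref{calcolobraket} every commutator appearing there is of the form (bounded coefficient)$\cdot\p_{2n}$, the coefficient being $\pm 1$, $X^\e_{i,u}u$, $\e\p_{2n}u$ (all bounded uniformly in $\e$), or $X^\e_{k,u}u=z-2||u||_{Lip}$, which is comparable to $z$; in the second right-hand term the outer $X^\e_{i,u}$ is first integrated by parts via \eqref{adjoint}, and $|a_{ij}[X^\e_{k,u},X^\e_{j,u}]u/\sqrt{1+|\nabla^\e_u u|^2}|\le 1+||u||_{Lip}^2$. For each such term I would move the remaining $\p_{2n}$ onto the test factor by a further integration by parts (in the case of the coefficient $X^\e_{k,u}u$ this amounts to differentiating $z^p\phi^2$), the factor that is left — either $X^\e_{i,u}u/\sqrt{1+|\nabla^\e_u u|^2}$ or the bounded coefficient above — being bounded uniformly in $\e$. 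Since $z\simeq$ const, all these contributions are controlled by $C\int z^{p-1}|\p_{2n}z|\,\phi^2+C\int z^p|\phi||\p_{2n}\phi|+C\int z^p(\phi^2+|\nabla^\e_u\phi|^2)$, and Young's inequality turns the first integral into $\delta\int|\p_{2n}z|^2z^{p-2}\phi^2+C(\delta,p)\int z^p\phi^2$. Collecting everything, for every $\delta>0$,
$$|p-1|\int|\nabla^\e_u z|^2z^{p-2}\phi^2\le\delta\int|\nabla^\e_u z|^2z^{p-2}\phi^2+\delta\int|\p_{2n}z|^2z^{p-2}\phi^2+C(\delta,p)\int z^p\big(\phi^2+|\nabla^\e_u\phi|^2+|\phi\,\p_{2n}\phi|\big).$$
Now I would use Proposition~\ref{cacciopolimisto} to estimate $\int|\p_{2n}z|^2z^{p-2}\phi^2\le C\big(\int|\nabla^\e_u z|^2z^{p-2}\phi^2+\int z^p(\phi^2+|\nabla^\e_u\phi|^2)\big)$ — this is the step that forces $p\ne 2$ — and then pick $\delta$ small (depending on $p$ through the constant of Proposition~\ref{cacciopolimisto} and the ellipticity constant) to absorb $\int|\nabla^\e_u z|^2z^{p-2}\phi^2$ into the left-hand side, which gives the assertion with a constant depending only on the gradient bound and on $p$, bounded as long as $p$ stays away from $1$ and $2$.

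\textbf{Main obstacle.} The delicate part is the bookkeeping forced by the vector field $X^\e_{2n-1,u}$: both its $L^2$-adjoint and the commutators $[X^\e_{k,u},\,\cdot\,]$ generate vertical ($\p_{2n}$) derivatives, and one must check that after all the integrations by parts every occurrence of the a priori uncontrolled quantity $\p_{2n}z$ carries a coefficient that Young's inequality can render arbitrarily small — only then does Proposition~\ref{cacciopolimisto} close the estimate — while the favourable quadratic form $\int|\nabla^\e_u z|^2z^{p-2}\phi^2$ keeps a definite sign, which is precisely what the exclusion of $p=1$ and $p=2$ guarantees; one must also verify throughout that none of the constants degenerate as $\e\to 0$, which holds because every bound used is controlled by $||u||_{Lip}$, uniform in $\e$.
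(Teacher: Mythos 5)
Your overall strategy --- test \eqref{PDEXu} with $z^{p-1}\phi^2$, extract the coercive quadratic form from the left-hand side via \eqref{adjoint} and ellipticity, push the vertical derivatives in the right-hand terms $I_1,I_2$ onto bounded factors or onto $z^{p-1}\phi^2$, and close with Proposition \ref{cacciopolimisto} plus a small-$\delta$ absorption --- is exactly the paper's. But there is one concrete gap, and it sits precisely at the point you yourself flag as the main obstacle: the term $I_1$ in the case $k=2n-1$. There Remark \ref{calcolobraket} gives $[X^\e_{2n-1,u},X^\e_{i,u}]=-X^\e_{i,u}u\,\p_{2n}$ for every $i$, so
$$I_1=-\sum_i\int X^\e_{i,u}u\,\p_{2n}\Big(\frac{X^\e_{i,u}u}{\sqrt{1+|\nabla^\e_u u|^2}}\Big)z^{p-1}\phi^2 .$$
If you integrate by parts termwise as you propose, $\p_{2n}$ lands on the coefficient $X^\e_{i,u}u$ and produces $\p_{2n}X^\e_{i,u}u$ for \emph{every} $i$; for $i\neq k$ this is $\p_{2n}z_i$ with $z_i=X^\e_{i,u}u+2||u||_{Lip}$, not $\p_{2n}z$. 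Your claimed bound $C\int z^{p-1}|\p_{2n}z|\,\phi^2$ therefore does not cover these terms, and the stated conclusion of Proposition \ref{cacciopolimisto} --- which controls $\int|\p_{2n}z|^2z^{p-2}\phi^2$ only for the particular $z$ under consideration, with the matching weight $z^{p-2}$ --- cannot absorb $\int|\p_{2n}z_i|^2 z^{p-2}\phi^2$.

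The paper closes this by summing over $i$ \emph{before} integrating by parts: with $F(s)=(1+s)^{-1/2}$ one recognizes $\sum_i X^\e_{i,u}u\,\p_{2n}\bigl(X^\e_{i,u}u/\sqrt{1+|\nabla^\e_u u|^2}\bigr)=\p_{2n}F(|\nabla^\e_u u|^2)$, i.e.\ $I_1$ is an exact vertical derivative of a bounded function; the integration by parts then only ever differentiates $z^{p-1}\phi^2$ and produces exactly the terms $\p_{2n}z$ and $\phi\,\p_{2n}\phi$ that your scheme can handle. (An alternative repair: bound $|\p_{2n}X^\e_{i,u}u|\le|\nabla^\e_u(\p_{2n}u)|+|\p_{2n}u|^2$ and invoke not the statement of Proposition \ref{cacciopolimisto} but the stronger intermediate inequality \eqref{XP2n} from its proof, which controls $\int|\nabla^\e_u\omega|^2z^{p-2}\phi^2$ with the weight you need.) Everything else in your outline --- the cases $k\neq 2n-1$, the treatment of $I_2$, the roles of $p\neq1$ and $p\neq2$, and the uniformity in $\e$ --- matches the paper.
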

\begin{proof} Multiplying  the equation (\ref{PDEXu})  by $z^{p-1}\phi^2$ and integrating we obtain
$$\int X^\e_{i, u}  \Big( \frac{a_{ij}(\nabla^\e_u u)}{\sqrt{1
+|\nabla^\e_{u} u|^2}} X^\e_{j, u}z \Big)z^{p-1}\phi^2 =$$$$=-\int
[X^\e_{k, u}, X^\e_{i, u}] \Big(\frac{X^\e_{i, u}u}{\sqrt{1 +
|\nabla_{ u}^\e u |^2}}\Big) z^{p-1}\phi^2 - \int  X^\e_{i, u}
\Big(\frac{a_{ij}(\nabla^\e_u u)}{\sqrt{1 +|\nabla^\e_{u} u|^2}}
[X^\e_{k, u}, X^\e_{j, u}] u\Big) z^{p-1}\phi^2.$$

We denote by  $I_1$ and $I_2$ the integrals in the  right hand side.
Let us consider the left hand side
$$\int {X^\e_{i, u}}\Big(
\frac{a_{ij}(\nabla^\e_u u)}{\sqrt{1 +|\nabla^\e_{u} u|^2}}
X^\e_{j, u}z \Big) z^{p-1}\phi^2 =$$ (since $({X^\e_{i, u}})^* = -
X^\e_{i, u} - \delta_{i, 2n-1} \p_{2n}u$)
$$=-(p-1)\int   \frac{a_{ij}(\nabla^\e_u u)}{\sqrt{1
+|\nabla^\e_{u} u|^2}} X^\e_{j, u}z \, X^\e_{i, u}z \, z^{p-2}\,
\phi^2 - 2 \int \, \frac{a_{ij}(\nabla^\e_u u)}{\sqrt{1
+|\nabla^\e_{u} u|^2}} X^\e_{j, u}z \, z^{p-1}\,\phi \, X^\e_{i,
u}\phi-$$
$$-\int   \frac{a_{2n-1 j}(\nabla^\e_u u)}{\sqrt{1
+|\nabla^\e_{u} u|^2}} X^\e_{j, u}z \, \p_{2n}u\,    \, z^{p-2}\,
\phi^2.$$
Using the uniform ellipticity of $a_{ij}$ and the boundeness of
$a_{ij}$ and $\p_{2n}u$,  we obtain
$$
\int
|\nabla_u^\e z|^2 z^{p-2}\phi^2 \leq C\Big(
 2 \int \, | \nabla^\e_{ u}z| \, z^{p-1}\,\phi \,
|\nabla^\e_{ u}\phi|+\int   | \nabla^\e_{ u}z|    \, z^{p-2}\, \phi^2\Big)+
|I_1| + |I_2|.$$ From here, using  an H\"older inequality and the
boundeness of $z$ from below one has
\begin{equation}\label{dis}
\int |\nabla_u^\e z|^2 z^{p-2}\phi^2 \leq  C \int      \, z^{p-2}\,
(\phi^2+ |\nabla_u^\e \phi|^2)+ |I_1| + |I_2|.\end{equation}
 Next we estimate separately the terms $I_1$ and $I_2$. We begin with the latter and observe that  integrating by parts the expression of $I_2$, we have $$I_2 = (p-1)\int
 \frac{a_{ij}(\nabla^\e_u u)}{\sqrt{1 +|\nabla^\e_{u} u|^2}}
[X^\e_{k, u}, X^\e_{j, u}] u \,X^\e_{i, u}z\, z^{p-2}\,\phi^2 $$
$$+
2\int \frac{a_{ij}(\nabla^\e_u u)}{\sqrt{1 +|\nabla^\e_{u} u|^2}}
[X^\e_{k, u}, X^\e_{j, u}] u \, z^{p-1}\, \phi\, X_{i, u}\phi $$
$$+\int  \frac{a_{2n-1\, j}(\nabla^\e_u u)}{\sqrt{1
+|\nabla^\e_{u} u|^2}} [X^\e_{k, u}, X^\e_{j, u}] u \,\p_{2n}u \,
z^{p-1}\, \phi^2$$ (using the fact that both $a_{ij}$ and the
brakets, computed in Remark \ref{calcolobraket},  are bounded)

$$\leq
C\int |\nabla_u^\e z| z^{p-2}\phi^2 + C\int
z^{p-1}\phi|\nabla_u^\e \phi| + C \int z^{p-1}\phi^2 \leq $$
(since $z$ is bounded from below)
$$ \leq
\delta \int |\nabla_u^\e z|^2 z^{p-2}\phi^2 + C(\delta)  \int z^p
(\phi^2 + |\nabla_u^\e \phi|^2)\label{disI2}, $$ for every $\delta
>0$.

%%%%%%%%%%%%%%%%%%  I_1

In order to estimate $I_1$ we first consider separately the case
$k\not= 2n-1$. If $k\le n-1$ then   by Remark \ref{calcolobraket}
$$I_1 =-{sign(k-n)}(1-\delta_{k, 2n})\int \p_{2n}
\Big(\frac{X^\e_{k+n-1, u}u}{\sqrt{1 + |\nabla_{  u}^\e u
|^2}}\Big) z^{p-1}\,\phi^2-\int X^\e_{k, u}u\p_{2n}
\Big(\frac{X^\e_{2n-1, u}u}{\sqrt{1 + |\nabla_{ u}^\e u |^2}}\Big)
z^{p-1}\phi^2\leq$$ (integrating by parts, using the boundness of
$\nabla_u^\e u$, and the fact that $\p_{2n}X^\e_{k, u}u = \p_{2n}
z $ )
$$\leq C\int |\p_{2n} z| z^{p-2} \phi^2 + C\int   z^{p-1}
|\phi\p_{2n} \phi| + C\int |\p_{2n} z| z^{p-1} \phi^2 \leq  $$
(using H\"older inequality and the fact that $z$ is bounded away
from 0)
$$\leq \delta \int |\p_{2n} z|^2 z^{p-2} \phi^2 + C(\delta) \int
z^{p} (|\phi\p_{2n} \phi| + \phi^2) \leq$$
  (by Proposition  \ref{cacciopolimisto})
 $$\leq \delta\int |\nabla_u^\e z|^2 z^{p-2} \phi^2 + C(\delta)
\int   z^{p}( |\phi \p_{2n} \phi|+ \phi^2 + |\nabla_u^\e
\phi|^2).$$ This estimate can be proved with a similar argument in the case
$n\le k \le 2n-2$ and $k=2n$. Hence if $k\not= 2n-1$ the conclusion follows by choosing
$\delta$ sufficiently small.

\medskip
If $k=2n-1 $, then   Remark
\ref{calcolobraket} yields
$$I_1 =-\int X^\e_{i, u} u \p_{2n} \Big(\frac{X^\e_{i, u}u}{\sqrt{1
+ |\nabla_{  u}^\e u |^2}}\Big) z^{p-1}\phi^2 = $$ (directly
computing the derivative with respect to $\p_{2n}$ )
$$=-\int
(X^\e_{i, u}u)^2 \p_{2n} \Big(\frac{1}{\sqrt{1 + |\nabla_{  u}^\e
u |^2}}\Big) z^{p-1}\phi^2 -\int X^\e_{i, u}u
\frac{\p_{2n}X^\e_{i, u}u }{\sqrt{1 + |\nabla_{ u}^\e u |^2}}
z^{p-1}\phi^2    $$ (since $\sum_i(X^\e_{i, u}u)^2 = |\nabla_{
u}^\e u |^2 $ )
$$=-\int
|\nabla_u^\e u|^2 \p_{2n} \Big(\frac{1}{\sqrt{1 + |\nabla_{ u}^\e
u |^2}}\Big) z^{p-1}\phi^2 -\frac{1}{2}\int
\frac{\p_{2n}|\nabla_u^\e u|^2  }{\sqrt{1 + |\nabla_{ u}^\e u
|^2}} z^{p-1}\phi^2.$$
 If  we set $F(s) = \frac{1}{\sqrt{1 +s}}$, then one easily computes $$\p_{2n}F(|\nabla_u^\e u|^2) = \Big(|\nabla^\e_u
u|^2\p_{2n}\Big(\frac{1}{\sqrt{1 + |\nabla_{ u}^\e u |^2}}\Big)+
\frac{1}{2} \frac{\p_{2n}|\nabla_u^\e u|^2}{\sqrt{1 + |\nabla_{
u}^\e u |^2}}\Big)$$so that  the previous integral becomes:

$$I_1= -\int \p_{2n}F(|\nabla_u^\e u|^2)z^{p-1}\phi^2 =$$
(integrating by parts)
$$ =(p-1)\int  F(|\nabla_u^\e u|^2)\p_{2n}z z^{p-2}\phi^2
 + 2 \int  F(|\nabla_u^\e u|^2)  z^{p-1}\phi \p_{2n}\phi \leq$$
  (using the fact that $F$ is bounded)
$$\leq C\int | \p_{2n}z|z^{p-2} \phi^2 + C \int
z^{p-1} |\phi\p_{2n} \phi |  \leq$$ (by Proposition
\ref{cacciopolimisto} and an  H\"older inequality)
 $$\leq \delta\int |\nabla_u^\e z|^2 z^{p-2} \phi^2 + C(\delta)
\int   z^{p}( |\phi\p_{2n}\phi | + \phi^2 + |\nabla_u^\e
\phi|^2),$$ thus concluding the proof.
\end{proof}

Next, we note that, from   Propositions \ref{cacciopolimisto} and
\ref{Xcacciopoli}  one can derive a Euclidean Cacciopoli type inequality 
for $z= X^\e_{k, u}u+ 2||u||_{Lip}$,  with $k\leq 2n$. Here and in the following
$\nabla_E$ denotes the Euclidean gradient in $\R^{2n}$.

\begin{proposition}\label{EuCacciopoli} (Euclidean Cacciopoli inequality)
If $u$ is a Lipschitz continuous solution of $L_{\e}u=0$ in $\Omega \subset \R^{2n}$, and $z=
X^\e_{k, u}u + 2||u||_{Lip}$, with $k\leq 2n$, then for every
$p\neq 1$ there exists a constant $C$, only dependent on the bounds on
the spatial gradient and on $p$ such that for every $\phi \in
C^{\infty}_0$
\begin{equation}\label{Euc-Cacc}
\int |\nabla_E z |^2 z^{p-2} \phi^2 \leq C \int z^p (\phi^2 +
|\nabla_E \phi|^2).\end{equation}
The constant $C$ is bounded if $p$ is bounded away from the values $1$ and $2$.
\end{proposition}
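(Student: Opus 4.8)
The plan is to read off the Euclidean Cacciopoli inequality for $z=X^\e_{k,u}u+2\|u\|_{Lip}$ directly from the two subelliptic Cacciopoli inequalities already established: Proposition \ref{cacciopolimisto}, which controls $\p_{2n}z$, and Proposition \ref{Xcacciopoli}, which controls $\nabla^\e_u z$. First I would observe that for each fixed $\e>0$ the operator $L_\e$ is uniformly elliptic, so a Euclidean Lipschitz solution $u$ of $L_\e u=0$ is automatically smooth and both of these propositions apply; the content of the statement is precisely that the final constant depends on $u$ only through $\|u\|_{Lip}$ (equivalently, through the bound on the Euclidean spatial gradient) and on $p$, hence is uniform in $\e$.

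The first step is a purely algebraic comparison of $\nabla_E z$ with $\nabla^\e_u z$ and the single derivative $\p_{2n}z$. Using the explicit form of the vector fields in \eqref{ecampi} — namely $X^\e_{i,u}=\p_i$ for $i\le 2n-2$, $X^\e_{2n-1,u}=\p_{2n-1}+u\p_{2n}$, and $X^\e_{2n,u}=\e\p_{2n}$ — one gets $\p_i z=X^\e_{i,u}z$ for $i\le 2n-2$, $\p_{2n-1}z=X^\e_{2n-1,u}z-u\,\p_{2n}z$, while $\p_{2n}z$ itself is a component of $\nabla_E z$. Since $u$ is bounded, this yields the pointwise estimate $|\nabla_E z|^2\le C(\|u\|_{Lip})\big(|\nabla^\e_u z|^2+|\p_{2n}z|^2\big)$.

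Next I would multiply this pointwise bound by $z^{p-2}\phi^2$, integrate, and dispose of the two resulting terms in turn. The term $\int|\p_{2n}z|^2 z^{p-2}\phi^2$ is estimated by Proposition \ref{cacciopolimisto} in terms of $\int|\nabla^\e_u z|^2 z^{p-2}\phi^2$ plus $\int z^p(\phi^2+|\nabla^\e_u\phi|^2)$ (invoking the special $p=2$ form of that proposition when $p=2$); then the remaining term $\int|\nabla^\e_u z|^2 z^{p-2}\phi^2$ is estimated by Proposition \ref{Xcacciopoli} in terms of $\int z^p(\phi^2+|\nabla^\e_u\phi|^2+|\phi\,\p_{2n}\phi|)$. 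Finally I would convert the cutoff terms into Euclidean ones: since $u$ is bounded and $\e\le 1$, the explicit expression of the $X^\e_{i,u}$ gives $|\nabla^\e_u\phi|^2\le C|\nabla_E\phi|^2$, and trivially $|\phi\,\p_{2n}\phi|\le\tfrac12\phi^2+\tfrac12|\nabla_E\phi|^2$. Chaining these bounds produces \eqref{Euc-Cacc}, with a constant that stays bounded as long as $p$ is kept away from $1$ and $2$, the exceptional values inherited from the two subelliptic inequalities.

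I do not expect a genuine obstacle in this step: the real analytic work was carried out in proving Propositions \ref{cacciopolimisto} and \ref{Xcacciopoli}, and what remains here is essentially bookkeeping. The two points that do require attention are the uniformity in $\e$ — which holds precisely because the degenerate direction enters only through the harmless factor $\e\p_{2n}\phi$ with $\e\le 1$ — and the handling of the borderline exponents $p\in\{1,2\}$, where one must use the appropriate special forms of the cited propositions and keep track of how their constants degenerate.
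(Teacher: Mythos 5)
Your proposal is correct and follows essentially the same route as the paper: a pointwise bound $|\nabla_E z|^2\le C(\|u\|_{Lip})\big(|\nabla^\e_u z|^2+|\p_{2n}z|^2\big)$, followed by Propositions \ref{cacciopolimisto} and \ref{Xcacciopoli} to absorb the two terms, and the elementary comparison of $|\nabla^\e_u\phi|$, $|\p_{2n}\phi|$ with $|\nabla_E\phi|$ for the cutoff. Your extra remarks (smoothness of $u$ for each fixed $\e$ by uniform ellipticity, and tracking the exceptional exponents $p=1,2$) are consistent with the paper's intent and do not change the argument.
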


\begin{proof} Observe that there exists $C>0$ depending only
on $||u||_{Lip}$ and $\Omega$ such that for all points in $\Omega$,
 $$|\nabla_E z|^2 \le  C \Big(\sum_{k<2n} |X_{k,u}^\e z|^2+|\p_{2n} z|^2\Big).$$ Hence using Propositions \ref{cacciopolimisto} and
\ref{Xcacciopoli}  and observing that $$|\nabla_u^\e\phi|^2+ |\p_{2n}\phi|^2 \le C |\nabla_E \phi|^2$$ we obtain \eqref{Euc-Cacc}.
\end{proof}

From Proposition \ref{EuCacciopoli}, using the classical Moser
procedure in the Euclidean setting, we can immediately deduce the
following regularity result:

\begin{proposition}\label{Calphaestimate}
Let  $u$ be a solution of $L_{\e}u=0$  in $\Omega \subset \R^{2n}$ and set   $z= X^\e_{k, u}u +2
||u||_{Lip}$, with $k\leq 2n$. For every compact set
$K\subset\subset \Omega$ then there exist a real number $\alpha$
and a constant $C$, only dependent on the bounds on the spatial
gradient and on the choice of the compact set such that
$$ ||z||_{C^{\alpha}_u(K)}\leq C.$$
In particular we have  the estimate
$$\sum_{i=1}^{2n+1} \sum_{j=1}^{2n}||X_{i,u}^\e X_{j,u}^\e u||_{L^2(K)}+  ||u||_{C^{1,\alpha}_u(K)}\leq C.$$
\end{proposition}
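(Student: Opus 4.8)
The plan is to derive a scale-invariant Harnack inequality for $z$ by a classical Moser iteration based on the Euclidean Caccioppoli estimate \eqref{Euc-Cacc}, and then convert the resulting Euclidean Hölder bound into the intrinsic one. Fix a smooth solution $u$ of $L_\e u=0$ on $\Omega$ with $||u||_{Lip}\le C_0$ uniformly in $\e$, fix $k\le 2n$, and set $z=X^\e_{k,u}u+2||u||_{Lip}$. Since $|X^\e_{k,u}u|\le C||u||_{Lip}$ on any compact $K\subset\subset\Omega$, one has $c_0\le z\le C_0'$ on $K$ with $c_0,C_0'>0$ independent of $\e$; in particular $z$ is positive and bounded above and below uniformly in $\e$. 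Proposition \ref{EuCacciopoli} gives, for every real $p\notin\{1,2\}$ and every $\phi\in C^\infty_0$, the inequality $\int|\nabla_E z|^2 z^{p-2}\phi^2\le C\int z^p(\phi^2+|\nabla_E\phi|^2)$ with $C$ depending only on $C_0$, on $K$ and on $\mathrm{dist}(p,\{1,2\})$, but not on $\e$. This is exactly the family of integral inequalities enjoyed by a positive solution of a uniformly elliptic divergence-form equation on $\R^{2n}$, so Moser's scheme applies unchanged.

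Substituting $w=z^{p/2}$ turns the estimate into $\int|\nabla_E w|^2\phi^2\le C\int w^2(\phi^2+|\nabla_E\phi|^2)$, and the Euclidean Sobolev inequality in dimension $2n$ — for which the hypothesis $n>1$ makes the gain exponent $\kappa:=n/(n-1)$ finite and $>1$ — yields $\big(|B_\rho|^{-1}\int_{B_\rho}z^{p\kappa}\big)^{1/\kappa}\le C(R-\rho)^{-2}\,|B_R|^{-1}\int_{B_R}z^p$ for concentric balls $B_\rho\subset B_R\subset K$. Iterating over a sequence of positive exponents (chosen to avoid $1$ and $2$, interpolating across the finitely many intervening stages) produces $\sup_{B_{R/2}}z\le C\big(|B_R|^{-1}\int_{B_R}z^{q}\big)^{1/q}$ for a fixed small $q>0$; iterating over negative exponents produces the matching lower bound for $\inf_{B_{R/2}}z$ in terms of a negative-exponent average; and the borderline choice $p=0$ gives $\int|\nabla_E\log z|^2\phi^2\le C\int(\phi^2+|\nabla_E\phi|^2)$, i.e. $\log z\in\mathrm{BMO}$ on compact subsets uniformly in $\e$, so the John--Nirenberg inequality bridges the positive and negative small-exponent averages. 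Combining the three inputs gives the Harnack inequality $\sup_{B_{R/2}}z\le C_H\inf_{B_{R/2}}z$ with $C_H$ independent of $\e$; applying it to $M_R-z$ and to $z-m_R$ (with $M_R=\sup_{B_R}z$, $m_R=\inf_{B_R}z$) yields the oscillation decay $\mathrm{osc}_{B_\rho}z\le C\rho^\alpha$ for some $\alpha\in(0,1)$, that is, $z$ is Hölder continuous on $K$ in the Euclidean sense, with exponent $\alpha$ and a bound uniform in $\e$.

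To pass to the intrinsic Hölder class of Definition \ref{d301}, I use that any horizontal curve for the fields $X^\e_{i,u}$ has Euclidean speed bounded by a constant (depending only on $C_0$ and $K$) times its $d_{\e,u}$-speed, so that $|x-x_0|\le C\,d_{\e,u}(x,x_0)$ uniformly in $\e$; since moreover $d_{\e,u}\le\mathrm{diam}\,K$ on $K$, the Euclidean bound $|z(x)-z(x_0)|\le C|x-x_0|^\alpha$ upgrades to $|z(x)-z(x_0)|\le C\,d_{\e,u}(x,x_0)^\alpha$, i.e. $||z||_{C^\alpha_u(K)}\le C$ uniformly in $\e$. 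Running this for each $k=1,\dots,2n$ shows that every component $X^\e_{k,u}u$ of $\nabla^\e_u u$ lies in $C^\alpha_u(K)$ with a uniform bound; together with $u\in C^\alpha_u(K)$ (which follows from the Euclidean Lipschitz bound and $|x-x_0|\le C\,d_{\e,u}(x,x_0)$), this is precisely $||u||_{C^{1,\alpha}_u(K)}\le C$.

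Finally, for the $L^2$ bound, take $\phi\equiv1$ on $K$ and apply Proposition \ref{Xcacciopoli} with, say, $p=3$: this bounds $\sum_{i\le 2n}\int_K|X^\e_{i,u}z|^2\,z\le C$, whence $\sum_{i\le 2n}||X^\e_{i,u}X^\e_{k,u}u||_{L^2(K)}\le C$ because $z\ge c_0>0$; similarly Proposition \ref{cacciopolimisto} with $p=3$ gives $\int_K|\p_{2n}z|^2\,z\le C$, hence $||\p_{2n}X^\e_{k,u}u||_{L^2(K)}=||X^\e_{2n+1,u}X^\e_{k,u}u||_{L^2(K)}\le C$ (recall $X^\e_{2n+1,u}=\p_{2n}$). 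Summing over $j=k\le 2n$ yields $\sum_{i=1}^{2n+1}\sum_{j=1}^{2n}||X^\e_{i,u}X^\e_{j,u}u||_{L^2(K)}\le C$. The one genuinely non-routine point is that \emph{every} constant above is independent of $\e$: this holds because the Caccioppoli constant of Proposition \ref{EuCacciopoli} depends only on $C_0$, $K$ and $\mathrm{dist}(p,\{1,2\})$, the Sobolev and John--Nirenberg constants depend only on the fixed dimension $2n$, and the uniform two-sided bound $c_0\le z\le C_0'$ both makes the integrability gains automatic and prevents the iteration from degenerating; the remaining issue is the bookkeeping needed to keep the iteration exponents off $\{1,2\}$, handled as usual by a generic choice of the starting exponent and interpolation.
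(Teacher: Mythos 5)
Your argument is essentially the paper's own proof: the Euclidean Caccioppoli inequality of Proposition \ref{EuCacciopoli} plus the substitution $w=z^{p/2}$ (resp.\ $\log z$ for $p=0$), Sobolev embedding, the positive/negative exponent iterations bridged by John--Nirenberg, and the resulting Harnack/oscillation-decay giving Euclidean H\"older continuity of $z$ uniformly in $\e$, exactly as in the paper's appeal to the standard Moser scheme. You additionally spell out two points the paper leaves implicit --- the upgrade from Euclidean to intrinsic H\"older continuity via $|x-x_0|\le C\,d_{\e,u}(x,x_0)$, and the second-order $L^2$ bound obtained from Propositions \ref{cacciopolimisto} and \ref{Xcacciopoli} with $\phi\equiv 1$ on $K$ and an exponent away from $\{1,2\}$ --- both of which are correct and consistent with the paper's conventions.
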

\begin{proof}
For $p\neq 1,2$ and $z$ as in the statement of the proposition define the function
$$w=\begin{cases}
z^{\frac{p}{2}} & \text{ if }p\neq 0;\\  \ln z  & \text{ if }p=0.
\end{cases}$$
If $p\neq 0$ then
the Caccioppoli inequality \eqref{Euc-Cacc} and the Euclidean Sobolev embedding Theorem  yield
\begin{equation}\label{cac1}
\Big(\int |\phi w|^{2\theta}\Big)^{\frac{1}{\theta}} \le C p^2 \int w^2 |\nabla_E \phi|^2,
\end{equation}
for some $\theta>1$.  Let $0<r_1<r_2$  be sufficiently small so that the 
Euclidean ball $B_{r_2}$ is contained in $\Omega$. With an appropriate choice
of test function \eqref{cac1} implies

\begin{eqnarray}\label{cac2}\Big( \int_{B_{r_1}} |z|^{\theta p} \Big)^{\frac{1}{p\theta} }
 \le \Big(\frac{C p^2}{r_2-r_1}\Big)^{\frac{2}{p}}
\Big( \int_{B_{r_2}} z^p\Big)^{\frac{1}{p}} && \text{ if }p>0\\
\Big( \int_{B_{r_1}} |z|^{\theta p} \Big)^{\frac{1}{p\theta} }
\Big(\frac{C p^2}{r_2-r_1}\Big)^{\frac{2}{|p|}}  \ge
\Big( \int_{B_{r_2}} z^p\Big)^{\frac{1}{p}}. && \text{ if }p<0.
\end{eqnarray}

If $p=0$,  \eqref{Euc-Cacc}  implies
$$\int|\phi\nabla_E w|^2 \le C\int|\nabla_E \phi|^2.$$
Let $r>0$ sufficiently small so that the Euclidean ball $B_r\subset \Omega$.
A standard choice of test function and H\"older inequality yield
$$\int_{B_r} |\nabla_E w|\le C R^{2n+1}.$$
Recalling that $\Omega\subset \R^{2n}$ and using Poincare' inequality
we obtain $w\in BMO(\Omega)$.

At this point, using \eqref{cac2}, the John-Nirenberg Lemma and following the 
standard Moser iteration process (see for instance \cite[Chapter 8]{GT})
we obtain the H\"older regularity of $z$. \end{proof}

\section{From $C^{1,\a}_{\u}$ to $C^{\infty}_{\u}$.}

In this section we will conclude the proof of the regularity
result. The section is organized in 3 steps. We fix a function
$\u$, and study solutions of the linearized equation
$$L_{\e, \u}u= \sum_{i,j=1}^{2n} a^\e_{i,j}(\nabla^\e_{\u}\u) X^\e_{i,
\u}X^\e_{j, \u}u,=0$$
 defined in (\ref{nondivlin}), and represented in
non-divergence form. The solutions $u$ well be represented in
terms of the fundamental solution $\G^\e_{x_0}$ of the
approximating operator $L_{\e, x_0}$, defined in (\ref{nondivfroz}):
$$L_{\e, x_0}u = \sum_{i,j=1}^{2n} a^\e_{i,j}(\nabla^\e_{\u}\u(x_0))
X^\e_{i, x_0}X^\e_{j, x_0}u,$$ where $a^{\e}_{ij}$ are defined in
(\ref{nondiv}). Since estimates of $\G^\e_{x_0}$ uniform in $\e$
are well known (and have been recalled in section 2), from these
representation formulas we will deduce a priori estimates for the
solution $u$, in terms of the fixed solution $\bar u$. Choosing
$\u=u$ we will obtain a priori estimates of the solutions of the
non linear equation $L_\e u=0$. Finally, letting $\e$ go to 0, we
will conclude the proof of the estimates of the vanishing
viscosity solutions of $Lu=0.$

\subsection{Representation formulas}

\begin{lemma}
The difference between the operator $L_{\e,
\bar u}$ and its frozen operator  can be
expressed as follows:

 \begin {equation}\label{LmenoL}
 \begin {split}
(L_{\e, x_0} - & L_{\e, \u})u(\x) =
\sum_{ij=1}^{2n}\Big(a_{ij}^\e(\nabla_\u^\e \u(x_0))-
a_{ij}(\nabla_\u^\e \u(\x))\Big)X^\e_{i,\bar u } X^\e_{j,\bar u
}u(\x)
\\&
-\sum_{j=1}^{2n}a_{2n-1 j}^\e(\nabla_\u^\e \u(x_0))(\bar u(\x)-
P^1_{x_0}\bar u(\x))\p_{2n}X^\e_{j, \bar u}u(\x)
\\&
- \sum_{j=1}^{2n}a_{2n-1 j}^\e(\nabla_\u^\e
\u(x_0))X^\e_{j,x_0}\Big((\bar u(\x)- P^1_{x_0}\bar u
(\x))\p_{2n}\Big)u(\x).
\end{split}\end{equation}
\end{lemma}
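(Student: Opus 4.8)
The plan is to compute both operators applied to a test function $u$ at the point $\x$ and compare them term by term, exploiting the fact that the only vector field which differs between the frozen and the linearized setting is $X^\e_{2n-1,\cdot}$. Recall that $X^\e_{i,\u}=X^\e_{i,x_0}$ for all $i\neq 2n-1$, while $X^\e_{2n-1,\u}=\p_{2n-1}+\u\p_{2n}$ and $X^\e_{2n-1,x_0}=\p_{2n-1}+P^1_{x_0}\u\,\p_{2n}$, so that
$$X^\e_{2n-1,\u}=X^\e_{2n-1,x_0}+(\u-P^1_{x_0}\u)\p_{2n},$$
exactly as recorded in the discussion preceding Lemma \ref{duederiv}. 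Writing $b(\x):=\u(\x)-P^1_{x_0}\u(\x)$ for brevity (this is the quantity called $\bar u - P^1_{x_0}\bar u$ in the statement), the whole difference is driven by replacing one or both copies of $X^\e_{2n-1}$ and by freezing the coefficients $a^\e_{ij}$ at $x_0$.

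First I would split $(L_{\e,x_0}-L_{\e,\u})u$ into a "coefficient part" and a "vector-field part" by the standard add-and-subtract trick:
$$L_{\e,x_0}u-L_{\e,\u}u=\sum_{ij}\big(a^\e_{ij}(\nabla^\e_\u\u(x_0))-a^\e_{ij}(\nabla^\e_\u\u(\x))\big)X^\e_{i,x_0}X^\e_{j,x_0}u+\sum_{ij}a^\e_{ij}(\nabla^\e_\u\u(\x))\big(X^\e_{i,x_0}X^\e_{j,x_0}-X^\e_{i,\u}X^\e_{j,\u}\big)u.$$
In the first sum, since the coefficient is now evaluated at the fixed points $x_0,\x$ and carries no derivatives, I may freely rewrite $X^\e_{i,x_0}X^\e_{j,x_0}u$ as $X^\e_{i,\u}X^\e_{j,\u}u$ up to lower-order corrections; more efficiently, I would keep the coefficient difference attached to $X^\e_{i,\u}X^\e_{j,\u}u$ directly, which produces precisely the first line of \eqref{LmenoL}. (The discrepancy between $X^\e_{i,x_0}X^\e_{j,x_0}u$ and $X^\e_{i,\u}X^\e_{j,\u}u$ inside this first sum, being multiplied by a difference of coefficients, can be absorbed — but in fact the cleanest bookkeeping is to perform the operator expansion once and attach the frozen coefficients throughout; I will organize the algebra so that no such leftover appears.)

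The heart of the computation is the vector-field part $X^\e_{i,x_0}X^\e_{j,x_0}-X^\e_{i,\u}X^\e_{j,\u}$. Using $X^\e_{i,\u}=X^\e_{i,x_0}$ for $i\neq 2n-1$ and $X^\e_{2n-1,\u}=X^\e_{2n-1,x_0}+b\,\p_{2n}$, I expand the product and find that the difference is nonzero only when $i=2n-1$ or $j=2n-1$. Concretely, for the second-order term one gets contributions of the form $-b\,\p_{2n}X^\e_{j,\u}u$ when the outer field is $X^\e_{2n-1}$, and $-X^\e_{i,x_0}\big(b\,\p_{2n}u\big)$ when the inner field is $X^\e_{2n-1}$; after summing against $a^\e_{2n-1\,j}(\nabla^\e_\u\u(x_0))$ these are exactly the second and third lines of \eqref{LmenoL}. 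The step I expect to require the most care is the clean separation of the "$i=2n-1$ outer" and "$j=2n-1$ inner" contributions without double-counting the $i=j=2n-1$ case, and making sure the derivative $\p_{2n}$ lands on the correct factor (i.e.\ that $\p_{2n}$ commutes past $b$ or is allowed to act on it as displayed, using $[\p_{2n},X^\e_{j,\u}]$-type identities and the product rule). Once the three groups of terms are identified and the cross-commutator bookkeeping is checked, the identity \eqref{LmenoL} follows by collecting terms; this is a direct, if slightly tedious, algebraic verification with no analytic input.
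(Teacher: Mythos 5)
Your proposal is correct and follows essentially the same route as the paper: an add--and--subtract decomposition into a coefficient-freezing part and a vector-field-freezing part, followed by the telescoping expansion $X^\e_{i,x_0}X^\e_{j,x_0}-X^\e_{i,\u}X^\e_{j,\u}=-(X^\e_{i,\u}-X^\e_{i,x_0})X^\e_{j,\u}-X^\e_{i,x_0}(X^\e_{j,\u}-X^\e_{j,x_0})$ with $X^\e_{2n-1,\u}-X^\e_{2n-1,x_0}=(\u-P^1_{x_0}\u)\p_{2n}$. Your first displayed splitting attaches the coefficient difference to the frozen second derivatives and $a^\e_{ij}(\x)$ to the operator difference, but you correctly note that the bookkeeping should instead subtract $\sum a^\e_{ij}(\nabla^\e_\u\u(x_0))X^\e_{i,\u}X^\e_{j,\u}u$, which is exactly what the paper does and yields \eqref{LmenoL} verbatim (using the symmetry $a^\e_{i,2n-1}=a^\e_{2n-1,i}$ to relabel the third line).
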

\begin{proof} Observe that 
$$
  (L_{\e, x_0} -  L_{\e, \u})u(\x) =
\sum_{ij=1}^{2n}\Big(a_{ij}^\e(\nabla_\u^\e \u(x_0))-
a_{ij}^\e(\nabla_\u^\e \u(\x))\Big)X^\e_{i,\bar u } X^\e_{j,\bar u
}u(\x) -
$$$$
   -\sum_{ij=1}^{2n}a_{ij}^\e(\nabla_\u^\e \u(x_0))\Big(X^\e_{i,\bar u }
X^\e_{j,\bar u }-X^\e_{i,x_0 } X^\e_{j,x_0 }\Big) u(\x) = $$$$ =
\sum_{ij=1}^{2n}\Big(a_{ij}^\e(\nabla_\u^\e \u(x_0))-
a_{ij}^\e(\nabla_\u^\e \u(\x))\Big)X^\e_{i,\bar u } X^\e_{j,\bar u
}u(\x) - $$$$-\sum_{ij=1}^{2n}a_{ij}^\e(\nabla_\u^\e
\u(x_0))\Big((X^\e_{i,\bar u }-X^\e_{i,x_0
   })X^\e_{j,\bar u}+X^\e_{i,x_0 }(
X^\e_{j,\bar u }-X^\e_{j,x_0 }) \Big) u(\x) $$$$=
\sum_{ij=1}^{2n}\Big(a_{ij}^\e(\nabla_\u^\e \u(x_0))-
a_{ij}^\e(\nabla_\u^\e \u(\x))\Big)X^\e_{i,\bar u } X^\e_{j,\bar u
}u(\x) - $$$$-\sum_{ij=1}^{2n}a_{ij}^\e(\nabla_\u^\e
\u(x_0))\Big(\delta_{i,2n-1}(\bar u(\x)- P^1_{x_0}\bar
u(\x))\p_{2n}X^\e_{j,\bar u} + X^\e_{i,x_0}(\delta_{j,2n-1}(\bar
u(\x)- P^1_{x_0}\bar u(\x))\p_{2n})\Big)u(\x)
$$
where $\delta $ is the Kroeneker function. From this the thesis
immediately follows
\end{proof}

%%%%%%%%%%%%%%%%   prima formula di rappresentazione   %%%%%%%% %%%%%%%%%%%%%%%%%%%%%%%%%%%%%%%%%%%%%%%%%%%%%%%%%%%%%%%%%

Let us first represent the solutions of the equation $L_{\e,
\u}u=0$ it in terms of the fundamental solution $ \G^\e_{x_0}$ of
the operator $L_{\e, x_0}$ defined in (\ref{nondivfroz}).
\begin{proposition}\label{p402}
Let us assume that  $\bar u$ is a fixed function of class
$C^\infty(\O),$ and that $u$ is a classical solution of $L_{\e,
\bar u}u = g \in C^\infty(\O),$
  Then for any $\phi\in C^\infty_0(\O)$ the function 
     $u\phi$ can be represented as
 \begin{equation}\label{representation1}
\begin{split}
 u\phi(x) & =      \int\limits_{\O}  \G_{x_{0}}^\e(x,\x)
     N_{1} (\x,x_0)  d\x
+\int\limits_{\O}  \G_{x_{0}}^\e(x,\x)
   L_{\e, \bar u}u (\x)\,\phi(\x)\,d\x+
\\&+  \sum_{ij=1}^{2n}  \int\limits_{\O}  \G_{x_{0}}^\e(x,\x)\Big(b_{ij}^\e(\nabla_\u^\e \u(x_0))-
b_{ij}^\e(\nabla_\u^\e \u(\x))\Big)X^\e_{i,\bar u } X^\e_{j,\bar u
}u(\x)\phi(\x)d\x
\\&+ \sum_{ijs=1}^{2n}   \int\limits_{\O}
 X^\e_{s,x_0}
\G_{x_{0}}^\e(x,\x) (\bar u(\x)- P^1_{x_0}\bar
u(\x))h_{sij}(x_0)X^\e_{i,\bar u } X^\e_{j,\bar u
}u(\x)\phi(\x)d\x.
\end{split}\end{equation}
The expressions of $N_1,$ $b_{ij}$ and $h_{sij}$ are the
following:
\begin{equation}\label{defN1}
\begin{split} N_{1} (\x,x_0)&= \, u(\x)L_{\e, x_0}\phi(\x)  +
 \\
&+ \sum_{ij=1}^{2n}a_{ij}^\e(\nabla_\u^\e \u(x_0))
   \Big(X^\e_{i,\u }u(\x) X^\e_{j,x_0 }\phi(\x) + X^\e_{j,\u}u(\x) X^\e_{i,x_0}\phi(\x)\Big)\\
&-\sum_{i =1}^{2n} a_{i\,2n-1}^\e(\nabla_\u^\e \u(x_0))
(\bar u(\x) - P^1_{x_0}\bar u (\x))\p_{2n}  u (\x)\, X^\e_{i,x_0}\phi(\x) \\
&+ \sum_{i =1}^{2n} a_{i\,2n-1}^\e(\nabla_\u^\e \u(x_0))  (\bar
u(\x)- P^1_{x_0}\bar u(\x)) X^\e_{n, \bar u}  X^\e_{i, \bar u
}u(\x) X^\e_{1 \bar u}\phi(\x)
\\
&-\sum_{i=1}^{2n} a_{i\,2n-1}^\e(\nabla_\u^\e \u(x_0))  (\bar
u(\x) - P^1_{x_0}\bar u (\x)) X^\e_{1, \bar u}  X^\e_{i, \bar u
}u(\x) X^\e_{n, \bar u}\phi (\x).
\end{split}\end{equation}

$$b_{ij}:\R^{2n}\rightarrow \R,$$
\begin{equation}\label{bij}
b_{i,j}(p)= - \delta_{ik} a_{kj}(p) + a_{2n-1 j}(\nabla_\u^\e
\u(x_0))p_1 \delta_{in} - p_n  \delta_{i1} a_{2n-1 j}(\nabla_\u^\e
\u(x_0))
\end{equation}
Finally $h_{sij}$ are real numbers, only dependent on $x_0$,
defined as
\begin{equation}\label{hkij}
h_{sij}(x_0)=-
a^\e_{2n-1,s}(\nabla^\e_\u\u(x_0))(\delta_{i1}\delta_{jn} -
\delta_{in}\delta_{j1})+
a^\e_{2n-1,j}(\nabla^\e_\u\u(x_0))(\delta_{s1}\delta_{in} -
\delta_{sn}\delta_{i1}).
\end{equation}
\end{proposition}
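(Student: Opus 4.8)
The plan is to obtain \eqref{representation1} as an exact algebraic identity, starting from the defining property of the fundamental solution of the frozen operator and then reorganising everything through Leibniz expansions, two elementary conversions between vector fields, and one integration by parts. Since $u\phi\in C^\infty_0(\O)$ and $\G^\e_{x_0}$ is the fundamental solution of the divergence form operator $L_{\e,x_0}$ (see Theorem \ref{fundam} and \cite{BLU}, \cite{CM}), $u\phi$ satisfies
$$u\phi(x)=\int_\O \G^\e_{x_0}(x,\x)\,L_{\e,x_0}(u\phi)(\x)\,d\x .$$
Expanding $L_{\e,x_0}(u\phi)=\sum_{ij}a^\e_{ij}(\nabla^\e_{\u}\u(x_0))X^\e_{i,x_0}X^\e_{j,x_0}(u\phi)$ by the Leibniz rule — and using that the coefficients $a^\e_{ij}(\nabla^\e_{\u}\u(x_0))$ are \emph{constants}, hence commute with every vector field and may be pulled out of the integral — produces a ``main'' term $\phi\,L_{\e,x_0}u$, a ``test function'' term $u\,L_{\e,x_0}\phi$, and the first order cross terms $\sum_{ij}a^\e_{ij}(\nabla^\e_{\u}\u(x_0))\bigl(X^\e_{i,x_0}u\,X^\e_{j,x_0}\phi+X^\e_{j,x_0}u\,X^\e_{i,x_0}\phi\bigr)$.

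In the main term I substitute $L_{\e,x_0}u=L_{\e,\u}u+(L_{\e,x_0}-L_{\e,\u})u=g+(L_{\e,x_0}-L_{\e,\u})u$ and expand the difference via the preceding Lemma, equation \eqref{LmenoL}. This immediately supplies the term $\G^\e_{x_0}(x,\x)\,L_{\e,\u}u(\x)\phi(\x)$, together with a difference-of-coefficients term $\sum\bigl(a^\e_{ij}(\nabla^\e_{\u}\u(x_0))-a_{ij}(\nabla^\e_{\u}\u(\x))\bigr)X^\e_{i,\u}X^\e_{j,\u}u$ and two terms carrying the factor $(\u-P^1_{x_0}\u)$. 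To bring all of these to the stated shape I use: (a) the relation $X^\e_{i,x_0}=X^\e_{i,\u}-\delta_{i,2n-1}(\u-P^1_{x_0}\u)\p_{2n}$, which rewrites every frozen first order derivative of $u$ as an intrinsic one plus a term with the factor $(\u-P^1_{x_0}\u)\p_{2n}u$; (b) the commutator identity $\p_{2n}=X^\e_{2n+1,\u}=[X^\e_{1,\u},X^\e_{n,\u}]=X^\e_{1,\u}X^\e_{n,\u}-X^\e_{n,\u}X^\e_{1,\u}$, which turns every surviving $\p_{2n}$-derivative of $u$ into a combination of the second order intrinsic derivatives $X^\e_{i,\u}X^\e_{j,\u}u$; and (c) in the last term of \eqref{LmenoL} an integration by parts in $\x$ transferring $X^\e_{j,x_0}$ onto $\G^\e_{x_0}(x,\cdot)$ — with no boundary term since $\phi\in C^\infty_0$ — whose formal adjoint is again a frozen vector field up to the explicit zero order correction of \eqref{adjoint}. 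Combining (a) and (b) with the difference-of-coefficients term produces exactly the coefficient function $b_{ij}$ of \eqref{bij}, and combining (b) and (c) on the last term of \eqref{LmenoL} produces the term carrying $X^\e_{s,x_0}\G^\e_{x_0}$ with the purely combinatorial constants $h_{sij}(x_0)$ of \eqref{hkij}; there the antisymmetric pattern $\delta_{i1}\delta_{jn}-\delta_{in}\delta_{j1}$ comes from the commutator in (b) and the second contribution from the adjoint correction in (c).

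All the remaining contributions are zero or first order in $u$ and each carries either a derivative of $\phi$ or the factor $L_{\e,x_0}\phi$: the test function term $u\,L_{\e,x_0}\phi$; the first order cross terms, after replacing $X^\e_{i,x_0}u$ by $X^\e_{i,\u}u$ through (a); and the spurious $(\u-P^1_{x_0}\u)\p_{2n}u$ pieces generated along the way, rewritten once more through (b). Collecting them yields the single kernel $N_1(\x,x_0)$ of \eqref{defN1}, which consequently vanishes wherever $\phi$ is locally constant. The proof then amounts to verifying that the collected coefficients match \eqref{bij} and \eqref{hkij} and that nothing has been dropped. I expect the main obstacle to be purely organisational rather than analytic: there is no regularity issue at this stage, since $u$ is a classical solution and $\u,g,\phi$ are smooth with $\phi$ compactly supported, so every integration by parts is harmless; the work lies entirely in the bookkeeping of the three systems of vector fields $X^\e_{i,\u}$, $X^\e_{i,x_0}$ and $\p_{2n}=X^\e_{2n+1,\u}$, keeping track of which conversion is applied where so that the combinatorial constants emerge precisely as claimed.
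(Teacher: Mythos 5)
Your proposal is correct and follows essentially the same route as the paper: both start from $u\phi(x)=\int_\O\G^\e_{x_0}(x,\x)L_{\e,x_0}(u\phi)(\x)\,d\x$, expand by Leibniz, insert the decomposition \eqref{LmenoL} of $L_{\e,x_0}-L_{\e,\u}$, rewrite $\p_{2n}$ as the commutator $[X^\e_{1},X^\e_{n}]$ (legitimate since these fields coincide in the frozen and intrinsic families), and integrate by parts to load one derivative onto $\G^\e_{x_0}$, collecting the leftovers into $N_1$, $b_{ij}$ and $h_{sij}$. The only remaining work is the bookkeeping you already identify, which matches the paper's computation.
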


\medskip\noindent
\begin{proof} By definition of fundamental solution, we have
\begin {equation}\label{prima}
  \begin {split}
   u\phi(x)  & = \int\limits_{\O} \G_{x_{0}}^\e(x,\x)
   L_{\e, x_0}(u\phi)(\x) d\x\\
   & =   \int\limits_{\O} \G_{x_{0}}^\e(x,\x)
   \left( u \,L_{\e, x_0}\phi + \sum_{ij=1}^{2n}a_{ij}^\e(\nabla_\u^\e \u(x_0))
   \Big(X^\e_{i,x_0 }u X^\e_{j,x_0 }\phi + X^\e_{j,x_0 }u X^\e_{i,x_0}\phi\Big)\right) d\x\\
   &+  \int\limits_{\O} \G_{x_{0}}^\e(x,\x)
   L_{\e, \u} u (\x)\, \phi(\x)  d\x  + \int\limits_{\O} \G_{x_{0}}^\e(x,\x)
   (L_{\e, x_0} - L_{\e, \u})u (\x)\, \phi(\x)  d\x.
 \end{split}\end{equation}

We can use the expression of $L_{\e, x_0} - L_{\e, \u}$ computed
in  (\ref{LmenoL}). Let us consider the second term in the right
hand side, multiplied by the fundamental solution. Since $\p_{2n}=
[X^\e_{1, x_0}, X^\e_{n, x_0}]$, it becomes:
\begin{equation}\label{quinta}
\sum_{j=1}^{2n}a_{2n-1 j}^\e(\nabla_\u^\e \u(x_0))\int\limits_{\O}
\G^\e_{x_{0}}(x,\x) (\bar u- P^1_{x_0}\bar u)\p_{2n} X^\e_{j, \bar
u }u \phi d\x\end{equation}
$$= \sum_{j=1}^{2n}a_{2n-1 j}^\e(\nabla_\u^\e \u(x_0))\int\limits_{\O}
 \G^\e_{x_{0}}(x,\x) (\bar u- P^1_{x_0}\bar u)
[X^\e_{1, x_0}, X^\e_{n, x_0}]\big( X^\e_{j, \bar u }u\big)\phi
d\x=$$ (integrating by part and using the fact that
 $X^\e_{1, x_0}= X^\e_{1 ,\bar u}$ and $X^\e_{n, x_0}= X^\e_{n,
\bar u}$.)
$$= - \sum_{j=1}^{2n}a_{2n-1 j}^\e(\nabla_\u^\e \u(x_0))\int\limits_{\O}X^\e_{1 x_0}
 \G^\e_{x_{0}}(x,\x) (\bar u(\x)- P^1_{x_0}\bar u(\x))
 X^\e_{n, \bar u}  X^\e_{j, \bar u
}u \phi\,d\x$$ $$-\sum_{j=1}^{2n}a_{2n-1 j}^\e(\nabla_\u^\e
\u(x_0))\int\limits_{\O} \G^\e_{x_{0}}(x,\x) X^\e_{1, \bar u}(\bar
u(\x)- P^1_{x_0}\bar u(\x)) X^\e_{n, \bar u} X^\e_{j, \bar u
}u\phi \,d\x$$
$$-\sum_{j=1}^{2n}a_{2n-1 j}^\e(\nabla_\u^\e \u(x_0))\int\limits_{\O}
  \G^\e_{x_{0}}(x,\x) (\bar u(\x)- P^1_{x_0}\bar u(\x))
X^\e_{n \bar u} X^\e_{j, \bar u }uX^\e_{1 \bar u}\phi \,d\x$$
$$+\sum_{j=1}^{2n}a_{2n-1 j}^\e(\nabla_\u^\e \u(x_0))
\int\limits_{\O}X^\e_{n, x_0} \G^\e_{x_{0}}(x,\x) (\bar u(\x)-
P^1_{x_0}\bar u(\x)) X^\e_{1 ,\bar u}  X^\e_{j, \bar u }u\phi
\,d\x$$
$$+\sum_{j=1}^{2n}a_{2n-1 j}^\e(\nabla_\u^\e \u(x_0))\int\limits_{\O}
\G^\e_{x_{0}}(x,\x)  X^\e_{n \bar u}(\bar u(\x)- P^1_{x_0}\bar
u(\x))  X^\e_{1, \bar u} X^\e_{j, \bar u }u \phi \,d\x$$
$$+\sum_{j=1}^{2n}a_{2n-1 j}^\e(\nabla_\u^\e \u(x_0))
\int\limits_{\O} \G^\e_{x_{0}}(x,\x)(\bar u(\x)- P^1_{x_0}\bar
u(\x)) X^\e_{1, \bar u}  X^\e_{j, \bar u }u X^\e_{n, \bar u}\phi
\,d\x.$$

The third   term  in (\ref{LmenoL}) becomes
\begin{equation}\label{formula3}
\begin{split}
 & \sum_{i=1}^{2n}  a_{2n-1 j}^\e(\nabla_\u^\e \u(x_0))\int\limits_{\O}
   \G^\e_{x_{0}}(x,\x) X^\e_{j, x_0}\Big((\bar u(\x)- P^1_{x_0}\bar u(\x))\p_{2n}\Big)u(\x)
\phi(\x)  d\x
\end{split}
\end{equation}
(integrating by part)
\begin{equation}
\begin{split}
  = &-  \sum_{j=1}^{2n} a_{2n-1 j}^\e(\nabla_\u^\e \u(x_0)) \int\limits_{\O}   X^\e_{j,x_0} \G^\e_{x_{0}}(x,\x) \,
(\bar u(\x)- P^1_{x_0}\bar u(\x))\p_{2n}  u(\x)\, \phi(\x) d\x \\
 &-  \sum_{j=1}^{2n}  a_{2n-1 j}^\e(\nabla_\u^\e \u(x_0))\int\limits_{\O}    \G^\e_{x_{0}}(x,\x) \,
(\bar u(\x)- P^1_{x_0}\bar u(\x))\p_{2n}  u(\x)\,
X^\e_{j,x_0}\phi(\x) d\x
\end{split}
\end{equation}

Inserting   (\ref{quinta}) and (\ref{formula3})   in
(\ref{LmenoL}) and using the expression of $b_{ij}$ and
$h_{sij}(x_0)$,
 we obtain

\begin{equation}\begin{split}
\int\limits_{\O} &\G_{x_{0}}^\e(x,\x)
   (L_{\e, x_0} - L_{\e, \u})u (\x)\, \phi(\x)  d\x
\\& =
 \sum_{ij=1}^{2n}  \int\limits_{\O}  \G_{x_{0}}^\e(x,\x)
\Big(a_{ij}^\e(\nabla_\u^\e \u(x_0))- a_{ij}^\e(\nabla_\u^\e
\u(\xi))\Big)X^\e_{i,\bar u } X^\e_{j,\bar u }u(\x)\phi(\x)d\x
\\&+  \sum_{ij=1}^{2n}  \int\limits_{\O}
\G_{x_{0}}^\e(x,\x)\Big(b_{ij}^\e(\nabla_\u^\e \u(x_0))-
b_{ij}^\e(\nabla_\u^\e \u(\x))\Big)X^\e_{i,\bar u } X^\e_{j,\bar u
}u(\x)\phi(\x)d\x
\\&+ \sum_{ijs=1}^{2n}   \int\limits_{\O}
 X^\e_{s,x_0}
\G_{x_{0}}^\e(x,\x) (\bar u(\x)- P^1_{x_0}\bar
u(\x))h_{sij}(x_0)X^\e_{i,\bar u } X^\e_{j,\bar u
}u(\x)\phi(\x)d\x
\end{split}\end{equation}
\begin{equation}\begin{split}
&
 -\sum_{j=1}^{2n}a_{2n-1 j}^\e(\nabla_\u^\e \u(x_0))\int\limits_{\O}
  \G^\e_{x_{0}}(x,\x) (\bar u(\x)- P^1_{x_0}\bar u(\x))
X^\e_{n \bar u} X^\e_{j, \bar u }uX^\e_{1 \bar u}\phi \,d\x
\\
 &
+\sum_{j=1}^{2n}a_{2n-1 j}^\e(\nabla_\u^\e \u(x_0))
\int\limits_{\O} \G^\e_{x_{0}}(x,\x)(\bar u(\x)- P^1_{x_0}\bar
u(\x)) X^\e_{1, \bar u}  X^\e_{j, \bar u }u X^\e_{n, \bar u}\phi
\,d\x
\\
 &-  \sum_{j=1}^{2n}  a_{2n-1 j}^\e(\nabla_\u^\e \u(x_0))\int\limits_{\O}    \G^\e_{x_{0}}(x,\x) \,
(\bar u(\x)- P^1_{x_0}\bar u(\x))\p_{2n}  u(\x)\,
X^\e_{j,x_0}\phi(\x) d\x
\end{split}\end{equation}

From this expression, equation (\ref{prima}), and the expression
of $N_1$ in (\ref{defN1}) we obtain the asserted representation
formula.
\end{proof}

%%%%%%%%%%%%%%%   formula  di rappresentazione con k   %%%%%%%%%%% %%%%%%%%%%%%%%%%%%%%%%%%%%%%%%%%%%%%%%%%%%%%%%%%%%%%%%%%%%%%
\bigskip

The following representation formula will be used to estimate higher order derivatives
of the solutions.

\begin{proposition}\label{krappres}
Let us assume that  $\bar u$  is a fixed function of class
$C^{\infty}(\O)$, and assume that $u$ is a classical solution of
$L_{\e, \bar u}u = g \in C^{\infty}(\O)$. Then  for any $\phi\in C^{\infty}_0(\O)$ the
  function
$u\phi$ can be represented as
 \begin{equation}\label{representation2}
\begin{split}
 u\phi(x)  = & \int\limits_{\O}  \G^\e_{x_{0}}(x,\x)
     N_{1} (\x,x_0 )  d\x + \int\limits_{\O}  \G^\e_{x_{0}}(x,\x)
  N_{2,k}(\x,x_0) \phi(\xi) d\x
    \\+& \sum_{s=1}^{2n}
 \int\limits_{\O}X^\e_{s, x_0} \G^\e_{x_{0}}(x,\x)
N_{2,k s}(\x,x_0)  d\x + \int\limits_{\O}  \G^\e_{x_{0}}(x,\x)
N_{3,k}(\x,x_0)  d\x
  \\+& \sum_{i=1}^{2n}
 \int\limits_{\O}X^\e_{i, x_0} \G^\e_{x_{0}}(x,\x) N_{4,k i}(\x,x_0)  d\x,
\end{split}
\end{equation}
where $N_{1} (\x,x_0)$ is defined in (\ref{defN1}). If $b_{ij}$ is
the function defined in (\ref{bij}), we call $b_{ij\u}=
b_{ij}(\nabla^\e_{\u}\u),$ and the other kernel are expressed:
$$
N_{2} (\x,x_0) =  P^{k-2}_{x_0} g(\xi)  +\sum_{ij=1}^{2n}
  \Big(P^{k-2}_{x_0}b_{ij\u}(\x) -b_{ij\u}(x_0)\Big)P^{k-3}_{x_0}
   (X^\e_{i, \u }X^\e_{j, \u }u)(\x)$$
$$N_{2,k s}(\x,x_0)  =
 \sum_{ij=1}^{2n}\Big(P^{k-1}_{x_0}\bar u(\xi) -P^1_{x_0}\bar
   u(\xi)\Big)h_{sij }(x_0) P^{k-3}_{x_0}(X^\e_{i, \u }X^\e_{j, \u }u)(\x)
$$\begin{equation}\begin{split}
N_{3,k}(\x,x_0) & = \Big(g(\x)-P^{k-2}_{x_0} g(\x)\Big) +\\ +&
\Big(b_{ij\u}(\x) - b_{ij\u}(x_0)\Big)\Big(X^\e_{i, \u }X^\e_{j,
\u }u(\x) -P^{k-3}_{x_0} (X^\e_{i, \u }X^\e_{j, \u }u)(\x)\Big)+\\
+&
 \Big(b_{ij\u}(\x) - P^{k-2}_{x_0}b_{ij\u}(\x)\Big)P^{k-3}_{x_0}(X^\e_{i, \u }X^\e_{j, \u
 }u)(\x)\end{split}
\end{equation}
 \begin{equation}\begin{split}
N_{4,ks}(\x, x_0)=&
 \Big(\bar u(\x)- P^{k-1}_{x_0}\bar u(\x)\Big)h_{sij}(x_0)P^{k-3}_{x_0}
 (X^\e_{i, \u }X^\e_{j, \u
 }u)(\x)
+\\ +&
 \Big(\bar u (\x)- P^{1}_{x_0}\bar u(\x)\Big)h_{sij}(x_0)\Big(X^\e_{i, \u }X^\e_{j, \u
 }u(\x) -P^{k-3}_{x_0}(X^\e_{i, \u }X^\e_{j, \u
 }u)(\x)\Big).
\end{split}
\end{equation}
   % $  g\in C^{k-2, \alpha}_{\bar u}$.
\end{proposition}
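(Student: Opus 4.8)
The plan is to start from the defining property of the fundamental solution $\G^\e_{x_0}$ and to keep bookkeeping, term by term, of the Taylor developments of all coefficients appearing in Proposition \ref{p402}. Concretely, I would begin with the representation \eqref{representation1} obtained in Proposition \ref{p402}; it already expresses $u\phi$ as a sum of a good kernel $N_1$ convolved against $\G^\e_{x_0}$, plus a term involving $L_{\e,\u}u=g$, plus two ``error'' terms carrying the factors $a_{ij}^\e(\nabla_\u^\e\u(x_0))-a_{ij}^\e(\nabla_\u^\e\u(\x))$, $b_{ij}^\e(\nabla_\u^\e\u(x_0))-b_{ij}^\e(\nabla_\u^\e\u(\x))$ and $(\bar u(\x)-P^1_{x_0}\bar u(\x))$. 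The whole task is to rewrite these differences so that they split into a polynomial-in-$\x$ part of controlled degree (which, after integrating against $\G^\e_{x_0}$, will be reabsorbed into $N_2$ and $N_{2,ks}$ via the group-convolution structure as in Proposition \ref{kerestim} and Remark \ref{osservazione}) plus a genuinely higher-order remainder (which goes into $N_{3,k}$ and $N_{4,ki}$).

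The key steps, in order: (i) For the $a_{ij}^\e$-difference, write $a_{ij\u}(\x)-a_{ij\u}(x_0)=\big(P^{k-2}_{x_0}a_{ij\u}(\x)-a_{ij\u}(x_0)\big)+\big(a_{ij\u}(\x)-P^{k-2}_{x_0}a_{ij\u}(\x)\big)$ and likewise expand the second-order derivative factor $X^\e_{i,\u}X^\e_{j,\u}u(\x)=P^{k-3}_{x_0}(X^\e_{i,\u}X^\e_{j,\u}u)(\x)+\big(X^\e_{i,\u}X^\e_{j,\u}u(\x)-P^{k-3}_{x_0}(\cdots)(\x)\big)$, using that $u\in C^{k-1,\a}_\u$ so that $X^\e_{i,\u}X^\e_{j,\u}u\in C^{k-3,\a}_\u$ and Theorem \ref{t301} applies. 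The cross terms split into: polynomial$\times$polynomial (degree $\le k-2$, contributes to $N_2$), and terms with at least one true remainder factor, whose order count (using Remark \ref{r3062}, Lemma \ref{duederiv} and the degree calculus) gives the bounds \eqref{N3}--\eqref{N3bis}, hence contributes to $N_{3,k}$. (ii) Repeat verbatim for $b_{ij}^\e$; note $b_{ij\u}=b_{ij}(\nabla^\e_\u\u)$ is a smooth function of the same argument, so the same expansion produces the $b$-terms in $N_{2,k}$ and $N_{3,k}$. (iii) For the factor $(\bar u(\x)-P^1_{x_0}\bar u(\x))$ appearing in the $X^\e_{s,x_0}\G^\e_{x_0}$-terms, write $\bar u(\x)-P^1_{x_0}\bar u(\x)=\big(P^{k-1}_{x_0}\bar u(\x)-P^1_{x_0}\bar u(\x)\big)+\big(\bar u(\x)-P^{k-1}_{x_0}\bar u(\x)\big)$, combine with the polynomial part of $X^\e_{i,\u}X^\e_{j,\u}u$ to obtain $N_{2,ks}$, and with the remainder pieces to get $N_{4,ks}$, whose required estimates (stated in Remark \ref{osservazione}) are one order better than $N_{3,k}$ precisely because $\bar u-P^{k-1}_{x_0}\bar u=O(d^{k+\a})$ and one derivative is already outside on $\G^\e_{x_0}$. (iv) Finally substitute $L_{\e,\u}u=g$ and split $g(\x)=P^{k-2}_{x_0}g(\x)+(g(\x)-P^{k-2}_{x_0}g(\x))$, sending the polynomial to $N_2$ and the remainder to $N_{3,k}$; the three ``leftover'' single-$X^\e$-derivative terms at the end of \eqref{representation1} (those with $X^\e_{n,\bar u}\phi$, $X^\e_{1,\bar u}\phi$ and $X^\e_{j,x_0}\phi$) have a $\phi$-derivative and so, after expanding the remaining $\bar u-P^1_{x_0}\bar u$ and $X^\e X^\e u$ factors, land in $N_1$ (they are supported where $\nabla\phi\ne0$, i.e.\ in $\overline{\Omega_3\setminus\Omega_2}$, matching hypothesis (i) of Proposition \ref{kerestim}). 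Collecting all contributions and using $P^j_{x_0}=0$ for $j<0$ to handle small $k$ yields exactly \eqref{representation2} with the stated $N_1,N_2,N_{2,ks},N_{3,k},N_{4,ks}$.

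The main obstacle I expect is the degree bookkeeping in step (i)--(ii): one must check that every product of a truncated-Taylor remainder with the other factors has the homogeneity needed for \eqref{N3} and \eqref{N3bis}, and this is where the non-smoothness of $\bar u$ bites — the frozen vector field $X^\e_{2n-1,x_0}$ differs from $X^\e_{2n-1,\u}$ by the operator $(\bar u-P^1_{x_0}\bar u)\p_{2n}$ of degree only $1-\a$, so applying Lemma \ref{duederiv} to rewrite $\nabla^\e_{\mu,\u}$ in terms of frozen derivatives introduces extra $(\bar u-P^1_{x_0}\bar u)^h$ factors that must be tracked and re-expanded. The bound $2\le k\le 6$ is exactly the range in which Proposition \ref{diffgamma} (hence the kernel estimates) is available, so one never needs more than finitely many such expansions; the verification is entirely mechanical once the splitting is set up, which is why I would present it as repeated application of the same ``polynomial part $+$ remainder'' decomposition rather than a single monolithic computation.
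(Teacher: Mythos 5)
Your proposal follows the paper's proof essentially verbatim: start from the representation \eqref{representation1} of Proposition \ref{p402} and split each kernel ($g$, the $b_{ij\u}$-difference times $X^\e_{i,\u}X^\e_{j,\u}u$, and the $(\u-P^1_{x_0}\u)$-factor) into its truncated Taylor polynomial plus remainder, routing the polynomial parts to $N_{2,k}$, $N_{2,ks}$ and the remainders to $N_{3,k}$, $N_{4,ks}$. The additional degree-counting you describe for verifying \eqref{N3}--\eqref{N3bis} is not needed for this proposition (which is a purely algebraic identity) and is carried out in the paper only in the subsequent a priori estimates, but it is correct and consistent.
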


\begin{proof} We represent $u\phi$ as in  formula
(\ref{representation1}), and we study each term separately. Let us
start with the  second term in (\ref{representation1}):
\begin{equation}\label{kformulatermine1}
g(\x)=L_{\e, \bar u}u(\x)= P^{k-2}_{x_0} g(\x) +
(g(\x)-P^{k-2}_{x_0} g(\x))
 \end{equation}
The kernel in the third term of (\ref{representation1}) will
developed as follows:
\begin{equation}\label{kformulatermine2}
\begin{split}
 \Big(b_{ij\u}(\x)- & b_{ij\u}(x_0)\Big)X^\e_{i, \u }X^\e_{j, \u }u(\x)   =
\\ = & \Big(P^{k-2}_{x_0}b_{ij\u}(\x) -b_{ij\u}(x_0)\Big)P^{k-3}_{x_0}(X^\e_{i, \u }X^\e_{j, \u }u)(\x)
+ \\ + &   \Big(b_{ij\u}(\x) - b_{ij\u}(x_0)\Big)
 \Big(X^\e_{i, \u }X^\e_{j, \u }u(\x)-P^{k-3}_{x_0}(X^\e_{i, \u }X^\e_{j, \u }u)(\x)\Big)
   \\ + &  \Big(b_{ij\u}(\x) - P^{k-2}_{x_0}b_{ij\u}(\x)\Big)P^{k-3}_{x_0}(X^\e_{i, \u }X^\e_{j, \u
   }u)(\x).
   \end{split}
   \end{equation}
The first terms in (\ref{kformulatermine1}) and
(\ref{kformulatermine2}) define $N_{2, k}$, the sum of the other
terms defines $N_{3, k}$.

 The kernel in the last term of (\ref{representation1})
can be represented as
\begin{equation}\label{kformulatermine3}
\begin{split}
\Big(\bar u(\x)- & P^1_{x_0}\bar u(\x)\Big)h_{kij}(x_0)X^\e_{i, \u
}X^\e_{j, \u }u(\x)   = \\=&\Big(\bar u(\x) - P^1_{x_0}\bar
u(\x)\Big)h_{sij}(x_0)\Big(X^\e_{i, \u }X^\e_{j, \u
}u(\x)-P^{k-3}_{x_0}(X^\e_{i, \u }X^\e_{j, \u }u)(\x)\Big)
   \\ + &  \Big(\bar u(\x) - P^{k-1}_{x_0}\bar u(\x)\Big)h_{sij}(x_0)P^{k-3}_{x_0}
   (X^\e_{i, \u }X^\e_{j, \u }u)(\x) +\\+&
   \Big(P^{k-1}_{x_0}\bar u(\x) -P^1_{x_0}\bar u(\x)\Big)h_{sij}(x_0)P^{k-3}_{x_0}
   (X^\e_{i, \u }X^\e_{j, \u }u)(\x).\\
   \end{split}
   \end{equation}
The first two terms of this expression define $N_{4, ks}$, the
third defines $N_{2, ks}$.
\end{proof}

\subsection{A priori estimates of the solution of the linear operator }

%%%%%%%%%%%%%%%%   rappresentazione derivate   %%%%%%%%%%%%%%%%%%%
%%%%%%%%%%%%%%%%%%%%%%%%%%%%%%%%%%%%%%%%%%%%%%%%%%%%%%%%%%%%%%%%%%

\begin{proposition} \label{esistderivL2}
Assume that $u$ and  $\bar u$ are of class
$C^{\infty}_{\u}(\O)$ and that $L_{\e, \bar u}
u=g\in C^{\infty}_{\u}(\O)$. Also assume that there exists a
compact set $K\subset \O$ and constant $C_0$ such that
$$||\u||_{C^{1, \alpha}_\u(K)} + ||g||_{C^{1, \alpha}_\u(K)} +
\sum_{deg(\s)\leq 2}||\nabla^\e_{\s, \u} u||_{L^{\infty}(K)}\leq
C_0.$$
Then, for every compact set $K_1\subset\subset K$, for every
$\alpha '<\alpha$ there exists a constant $C>0$ only dependent on
$C_0, \alpha$ and the compact sets, such that
 \begin{equation}
 \sum_{deg(\s) =2} ||\nabla^\e_{\s,
\u} u||_{C^{ \alpha'}_{\u}(K_1)}\leq C.\end{equation}
Moreover,  for every choice of compact sets $K_{2}, K_{3}$ such
that $K_{1} \subset\subset K_{2} \subset\subset K_{3}
\subset\subset  K$ for every function $\phi \in
C_{0}^{\infty}(\text{int}(K))$ such that $\phi\equiv 1$ in
$K_{2}$,  for every multi-index $\s$ of length $2$, we have the
following representation
 \begin{equation}\label{representation1der}
\begin{split}
\nabla^\e_{\s, \bar u}&(u\phi)(x_0)  =      \int\limits_{\O}
\nabla^\e_{\s, \bar u}(x_0)\G^\e_{x_{0}}(\cdot,\x)
     N_{1} (\x,x_0)  d\x
\\&+\int\limits_{\O}  \G^\e_{x_{0}}(\x, 0)
   \nabla^{\e}_{\s, \bar u} (x_0)    P_{x_0}^{1}g(\x)\, \phi(x_0\circ \x^{-1})\,
   d\x
\\&+\int\limits_{\O} \nabla^\e_{\s, \bar
u}(x_0)\G^\e_{x_{0}}(\cdot,\x)\Big(g(\x) - P_{x_0}^{1}g(\x)
\Big)\phi(\x)d\x
\\&+ \sum_{ij=1}^{2n}\int\limits_{\O}  \nabla^\e_{\s, \bar
u}(x_0)\G_{x_{0}}^\e(\cdot,\x) \Big(b_{ij}^\e(\nabla_\u^\e
\u(x_0))- b_{ij}^\e(\nabla_\u^\e \u(\x))\Big)X^\e_{i,\bar u }
X^\e_{j,\bar u }u(\x)\phi(\x)d\x
\\&+\sum_{ijs =1}^{2n}    \int\limits_{\O}
\nabla^\e_{\s, \bar u} X^\e_{i,x_0}(x_0) \G_{x_{0}}^\e(\cdot,\x)
\Big(\bar u(\x)- P^1_{x_0}\bar u(\x)\Big)h_{sij}(x_0)X^\e_{i,\bar
u } X^\e_{j,\bar u }u(\x)\phi(\x)d\x.
\end{split}\end{equation}
where $N_1$ is defined in Proposition \ref{p402}.
\end{proposition}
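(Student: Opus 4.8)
The plan is to obtain both assertions from the representation formula of Proposition~\ref{krappres} (equivalently, by differentiating formula \eqref{representation1} of Proposition~\ref{p402}) together with the kernel estimates packaged in Proposition~\ref{kerestim} and Remark~\ref{osservazione}. Since $u,\u\in C^\infty_{\u}(\O)$ and $g\in C^\infty_{\u}(\O)$, formula \eqref{representation2} holds with $k=2$: fixing $\phi\in C^\infty_0(\mathrm{int}(K))$ with $\phi\equiv1$ on $K_2$, the function $u\phi$ is written as a sum of integrals of $\G^\e_{x_0}$ and $X^\e_{s,x_0}\G^\e_{x_0}$ against the kernels $N_1$, $N_{2,2}$, $N_{2,2s}$, $N_{3,2}$, $N_{4,2s}$. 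The task is then to verify that these kernels satisfy, with constants uniform in $\e$ and depending only on $C_0$, the structural hypotheses (i)--(iii) of Proposition~\ref{kerestim} and their $N_4$-analogues from Remark~\ref{osservazione}; once this is done, Proposition~\ref{kerestim} yields at once $\nabla^\e_{\s,\u}u\in C^{\alpha'}_{\u}(K_1)$ with $\|u\|_{C^{2,\alpha'}_{\u}(K_1)}\le C$ for every $\alpha'<\alpha$, and the differentiated representation \eqref{representation8} specialises to \eqref{representation1der}.

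The kernel bookkeeping for $k=2$ runs as follows. The term $N_1$, given by \eqref{defN1}, is supported away from $x_0$ (on $\overline{\Omega_3\setminus\Omega_2}$, since every derivative of $\phi$ vanishes near $x_0$); there $\u-P^1_{x_0}\u$ is bounded, the coefficients $a^\e_{ij}(\nabla^\e_{\u}\u(x_0))$ and those of $X^\e_{2n-1,x_0}$ are H\"older in $x_0$, and all the factors $u,\nabla^\e_{\u}u,\nabla^{\e2}_{\u}u$ are bounded by $C_0$; thus \eqref{N1k} follows from Remark~\ref{r3062} together with $\u,u,g\in C^{\alpha}_{\u}$ (note $C^{1,\alpha}_{\u}\hookrightarrow C^\alpha_{\u}$). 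The kernels $N_{2,2}$ and $N_{2,2s}$ are polynomial in $x_0\circ\x^{-1}$ with coefficients built from $g$, $b_{ij\u}$ and the derivatives of $\u$ at $x_0$, hence smooth in $\x$ with all $\x$-derivatives uniformly H\"older in $x_0$; the derivatives are transferred onto this smooth factor through the group-convolution/left-invariance device of Proposition~2.23 in \cite{CPP}, which is what produces the term carrying $\nabla^\e_{\s,\u}(x_0)P^1_{x_0}g$ in \eqref{representation1der} (the degree-$2$ monomial $e_{2n}$ hidden in $P^1_{x_0}g$ being the part that survives a degree-$2$ derivative). For $N_{3,2}(\x,x_0)=\big(g(\x)-P^0_{x_0}g(\x)\big)+\sum_{ij}\big(b_{ij\u}(\x)-b_{ij\u}(x_0)\big)X^\e_{i,\u}X^\e_{j,\u}u(\x)$ the bounds \eqref{N3}--\eqref{N3bis} with $k=2$, i.e.\ $|N_{3,2}(\x,x_0)|\le C_1 d_{\e,x_0}^{\alpha}(x_0,\x)$ and $|N_{3,2}(\x,x_0)-N_{3,2}(\x,x)|\le C_1 d_{\e,x_0}^{\alpha}(x_0,x)$, follow from $g,\nabla^\e_{\u}\u\in C^\alpha_{\u}$ (so $g$ and $b_{ij\u}=b_{ij}(\nabla^\e_{\u}\u)$ are $\alpha$-H\"older) and $|\nabla^{\e2}_{\u}u|\le C_0$. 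Finally $N_{4,2s}(\x,x_0)=\sum_{ij}\big(\u(\x)-P^1_{x_0}\u(\x)\big)h_{sij}(x_0)X^\e_{i,\u}X^\e_{j,\u}u(\x)$ obeys the analogues of \eqref{N3}--\eqref{N3bis} with exponent $k-1=1$, because $|\u(\x)-P^1_{x_0}\u(\x)|\le C d_{\e,\u}^{1+\alpha}(x_0,\x)$ by the Taylor Theorem~\ref{t301} for $\u\in C^{1,\alpha}_{\u}$, $|P^1_{x_0}\u(\x)-P^1_{x}\u(\x)|\le C d_{\e,\u}^{\alpha}(x_0,x)\,d_{\e,\u}(x_0,\x)$ by Remark~\ref{r3062}, $h_{sij}$ are constants, and $|\nabla^{\e2}_{\u}u|\le C_0$; throughout one passes between $d_{\e,\u}$, $d_{\e,x_0}$ and $d_{\e,\x}$ by Proposition~\ref{p201}. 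All the constants so obtained are independent of $\e$.

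The main obstacle is exactly what dictates these splittings: a naive double differentiation of $\int\G^\e_{x_0}(x,\x)g(\x)\phi(\x)\,d\x$ is illegitimate, since $\nabla^{\e2}_{x_0}\G^\e_{x_0}\sim d_{\e,x_0}^{-Q}$ is a non-integrable, Calder\'on--Zygmund-type singularity. One must therefore peel off from $g$, from the coefficients $b_{ij\u}$, and from $\u$ their $P^1_{x_0}$-Taylor polynomials: on the polynomial pieces the derivatives are moved onto a smooth factor by left-invariance, while the remainders vanish like $d_{\e,x_0}^{\alpha}$ or $d_{\e,x_0}^{1+\alpha}$ at $x_0$, which renders the integral of $\nabla^{\e2}_{x_0}\G^\e_{x_0}$ against them convergent and — crucially — legitimises the differentiation under the integral sign. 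This mechanism is precisely the one encapsulated in Proposition~\ref{kerestim}; the only truly delicate point left is the $\e$-uniformity of all the kernel estimates, which rests on the $\e$-uniform fundamental-solution bounds of Theorem~\ref{fundam} and Proposition~\ref{diffgamma}, the Taylor expansion Theorem~\ref{t301} and Remark~\ref{r3062}, and the $\e$-uniform comparability of $d_{\e,x_0}$, $d_{\e,\u}$ and $d_{\e,\x}$ from Proposition~\ref{p201}.
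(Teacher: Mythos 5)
Your proposal is correct and follows essentially the same route as the paper: represent $u\phi$ via the parametrix of Proposition \ref{p402}, verify that the kernels satisfy hypotheses (i)--(iii) of Proposition \ref{kerestim} (and the $N_{4}$-conditions of Remark \ref{osservazione}) with constants depending only on $C_0$ and uniform in $\e$, then invoke that proposition to get both the $C^{2,\alpha'}_{\u}$ bound and the differentiated representation. The only (harmless) discrepancy is that you take $N_{3,2}=g-P^0_{x_0}g+\cdots$ as in the literal $k=2$ case of Proposition \ref{krappres}, whereas the paper splits $g=P^1_{x_0}g+(g-P^1_{x_0}g)$, which is the splitting that produces the $P^1_{x_0}g$ term appearing in \eqref{representation1der}.
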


\begin{proof} By Proposition \ref{p402}, we know that the function
$u\phi$ admits a representation in terms of the fundamental
solution of the frozen operator, and suitable kernels. Let us
verify that these kernels satisfy the assumptions of Lemma
\ref{kerestim} with $k=2$.

From the expression of $N_1$ in (\ref{defN1}), we see that $N_1$
is  a sum of derivatives of the function $\phi$. Since $\phi$ is
constantly equal to 1 on the set $K_2$, then $N_1$ vanishes on the
same set. Hence the point  (i) of Lemma \ref{kerestim} regarding the
support of $N_1$  is satisfied. On the other side $N_1$ depends on
$x_0$ only through the first derivatives of $\u,$ while it depends
on $\x$ through the derivatives up to second order of the function
$u$. Hence it is H\"older continuous in $x_0$ locally uniformly in
$\x$. Hence there exists a constant $C_1$ only depending on $C_0$
such that $$|N_1(\x, x_0) - N_1(\x, x)| \leq C_1 d^\alpha_{\e,
x_0}(x_0, x),$$ for every $x, x_0\in K_1$ and $\x\in K_3$ and this
conclude the proof of assumption (\ref{N1k}).

The second term in (\ref{representation1}) is  the convolution of the
fundamental solution with the function
 $g(\x)=L_{\e, \bar u}u(\x) = P^1_{x_0}g(\x) + \Big(g(\x) -P^1_{x_0}g(\x)\Big).$
The function  $P^1_{x_0}g(\xi) \phi(\x)$ will play the role of the
kernel $N_2$  in Lemma \ref{kerestim}. Since $g$ is of class
$C^{1, \alpha}_\u(K)$, its first order Taylor polynomial is
H\"older continuous in $x_0$ locally uniformly in $\x$ and there
exists a constant $C_1$ only depending on $C_0$ such that
$$|P^1_{x_0}g(\xi) \phi(\x) - P^1_{x}g(\xi) \phi(\x)| \leq C_1
d^\alpha_{\e, x_0}(x_0, x),$$  for every $x, x_0\in K_1$ and
$\x\in K_3$. And this conclude the proof of assumption (ii).

The function $(g -P^1_{x_0}g)(\x)$ satisfies the assumptions
(\ref{N3}) and (\ref{N3bis}) of the kernel $N_{3,k}$, in the same
lemma with $k=2$. Indeed, from the definition (\ref{t301}) of Taylor
polynomial we deduce that
$$
 |(g -P^1_{x_0}g)(\x)|\leq C_1 d^{1+\alpha}_{\e,
x_0}(x_0, \x),$$ if $x,x_0$ are fixed in $K_1$ and  $\x \in K_3$.
Similarly, from   (\ref{tosta}) we deduce that $$
\left|P_{x_{0}}^{1}g(\x) - P_{x}^{1}g(\x)\right| \le C_1 d_{\e,
x_{0}}(x_{0},x)^{\a}
  d_{\e, x_{0}}(x_{0},\x),$$
again with a constant $C_1$, depending on the $C^{1, \alpha}_\u$
norm of $g$. This ensures that (\ref{N3bis}) is satisfied.

In the same way, using the regularity properties of $\u$, we
deduce that the function
$$\Big(b_{ij}^\e(\nabla_\u^\e \u(x_0))- b_{ij}^\e(\nabla_\u^\e
\u(\x))\Big)X^\e_{i,\bar u } X^\e_{j,\bar u }u(\x)$$ satisfies the
assumption  of the kernel $N_{3, k}$ in Lemma \ref{kerestim}.

Finally, from the property (\ref{tosta}) of the Taylor
polynomials, we deduce that the function 

$$
 (\bar u(\x)- P^1_{x_0}\bar
u(\x))h_{sij}(x_0)X^\e_{i,\bar u } X^\e_{j,\bar u }u(\x)\phi(\x)$$
satisfies the assumptions of the kernels $N_{4, ks}$ 
in Remark \ref{osservazione}. 
%In
%conclusion, we can apply iteratively  Lemma \ref{kerestim} with $k=1,2,3$, to compute the
%derivatives of the function $u$, and obtain the asserted estimate.
\end{proof}

%{\bf 2-derivabilita' e formula delle derivate}

%stime a priori  delle derivate indipendenti da $\e$}
In order to obtain an
a-priori estimates of the second derivatives of the solution $u$ in
terms of its $L^p$ norms we need to improve slightly  the previous result.

\begin{lemma}\label {hold1}
Assume that $u$ and  $\bar u$ are $C^{\infty}$ functions,
and that $u$ is a classical solution of $L_{\e, \bar u}
u=g\in C^{\infty}_{\u}(\O)$. Also assume that there exist a
compact $K\subset \O$, and real numbers
 $p>1$, $\alpha<1$ and $C_0>0$ such that
 \begin{equation}\label{stima1}
||\bar u||_{C^{1,\alpha}_{\bar u
 }(K)} +  ||g||_{C^{1,\alpha}_{\bar u }(K)} +$$$$+ ||u||_{C^{1,\alpha}_{\bar u }(K)}+
 \sum_{deg(\s) =2}||\nabla^\e_{\s, \bar u}u||_{L^p(K)}\le C_0. \end{equation}

\begin{itemize}
 \item
  If $Q-p\,\alpha>0$, then, for every compact set $K_1\subset\subset K$
  there exists a constant $C>0$ only depending on $C_0$
such that
\begin{equation}\label{caso1}
\sum_{deg(\s) =2}||\nabla^\e_{\s, \bar u}u||_{L^r(K_1)}\le C,
\end{equation}
where $r=\frac{Q p}{Q-p\,\alpha}$, and $Q$ is the homogeneous
dimension of the space, defined in (\ref{homodimen}).

  \item If $Q-p\,\alpha<0$, then for every compact set $K_1\subset\subset K$
  there exists a constant $C>0$ only depending on $C_0$
such that
\begin{equation}\label{stima23}
\sum_{deg(\s) =2}||\nabla^\e_{\s, \bar u}u||_{L^\infty(K_1)}\le C.
\end{equation}
%\begin{equation}\label{stima23}||u||_{C^{2,\beta}_{\bar u }(K_1)}\le C,
%\end{equation}
%where $\beta= \alpha-Q/p'$, and  $p'$ is the exponent conjugate of
%$p$.
\end{itemize}
\end{lemma}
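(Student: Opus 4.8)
The plan is to combine the representation formula of Proposition \ref{krappres} with the uniform fundamental-solution estimates of Theorem \ref{fundam} and Proposition \ref{diffgamma}, and then read off the claimed gain of integrability by a Sobolev-type embedding argument for the control metric $d_{\e,x_0}$. Concretely, I fix a cutoff $\phi\in C^\infty_0(\text{int}(K))$ with $\phi\equiv 1$ on a slightly smaller compact set $K'$ sandwiched between $K_1$ and $K$, apply Proposition \ref{krappres} with $k=2$, and differentiate twice. The terms in \eqref{representation2} split into two families: those built from $N_1$, $N_{2,k}$, $N_{2,ks}$ (which are smooth in $\xi$ and supported away from $x_0$, or are Taylor polynomials), and the ``error'' terms built from $N_{3,k}$ and $N_{4,ks}$, which carry the genuine second derivatives $X^\e_{i,\u}X^\e_{j,\u}u$ of the solution. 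The first family contributes bounded quantities by the hypotheses \eqref{stima1} and the estimates of Theorem \ref{fundam} (integrating $d_{\e,x_0}^{2-2}/|B|$ against a bounded kernel over a compact set), so the whole game is the $N_3$ and $N_4$ pieces.

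For those, the key observation is that $N_{3,2}$ and $N_{4,2s}$ are, schematically, a product of a factor that vanishes to H\"older order $\alpha$ at $x_0$ (coming from $b_{ij\u}(\xi)-b_{ij\u}(x_0)$ or $\u(\xi)-P^1_{x_0}\u(\xi)$, whose H\"older continuity is exactly the $C^{1,\alpha}_\u$ hypothesis) times a second derivative $X^\e_{i,\u}X^\e_{j,\u}u(\xi)\in L^p$. Thus, after applying $\nabla^\e_{\s,\u}(x_0)$ (of degree $2$) to $\G^\e_{x_0}(\cdot,\xi)$, respectively $X^\e_{i,x_0}(x_0)\G^\e_{x_0}(\cdot,\xi)$ (degree $1$), the estimate \eqref{2} gives a kernel bounded by $C\,d_{\e,x_0}(x_0,\xi)^{-Q+\alpha}$ against an $L^p$ density; this is precisely a Riesz-type potential of fractional order $\alpha$ on a space of homogeneous dimension $Q$. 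The standard fractional-integration estimate on spaces of homogeneous type then yields: if $Q-p\alpha>0$, the potential maps $L^p(K)$ into $L^r(K_1)$ with $r=Qp/(Q-p\alpha)$, giving \eqref{caso1}; if $Q-p\alpha<0$ the exponent is ``negative,'' i.e.\ the potential maps $L^p$ into $L^\infty$ (indeed into $C^{\alpha-Q/p}_\u$), giving \eqref{stima23}. One must also use that the factor $\u(\xi)-P^1_{x_0}\u(\xi)$ together with the extra $X^\e_{i,x_0}(x_0)$-derivative on $\G$ in the $N_4$ terms still produces order $-Q+\alpha$ — here one invokes Lemma \ref{duederiv} and Remark \ref{remp410} to pass freely between frozen and intrinsic derivatives, and Proposition \ref{p201} to compare the distances $d_{\e,x_0}$, $d_{\e,\u}$ and $d_{\e,\xi}$ uniformly on compacts. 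All constants are uniform in $\e$ because every fundamental-solution estimate we quote is.

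The main technical obstacle is \emph{not} the fractional integration itself, which is classical once the kernel bound is in hand, but rather justifying the bound $|\nabla^\e_{\s,\u}(x_0)\G^\e_{x_0}(\cdot,\xi)\,N_{3,2}(\xi,x_0)|\le C\,d_{\e,x_0}(x_0,\xi)^{-Q+\alpha}|X_{i,\u}X_{j,\u}u(\xi)|$ with a constant independent of $x_0$ ranging over $K_1$. Two points require care: (i) the density $X^\e_{i,\u}X^\e_{j,\u}u$ is only in $L^p$, so one cannot pointwise-estimate the product; instead one keeps it as a density and estimates the operator $f\mapsto\int d_{\e,x_0}(x_0,\cdot)^{-Q+\alpha}f$ in $L^p\to L^r$ norm, then bounds the contribution of the ``polynomial part'' $P^{k-3}_{x_0}(X^\e_{i,\u}X^\e_{j,\u}u)$ separately — but with $k=2$ we have $k-3=-1$, so by the convention $P^{-1}=0$ those subterms simply vanish, which is why the lemma is stated precisely at this level; and (ii) near the singularity $\xi\to x_0$ one needs the cancellation $b_{ij\u}(\xi)-b_{ij\u}(x_0)=O(d_{\e,\u}(x_0,\xi)^\alpha)$, which follows because $b_{ij}$ is a smooth (rational) function of $p$ composed with $\nabla^\e_\u\u\in C^\alpha_\u$, together with $\u(\xi)-P^1_{x_0}\u(\xi)=O(d_{\e,\u}(x_0,\xi)^{1+\alpha})$ from Theorem \ref{t301}; one then checks the full exponent count $-Q-\deg(\s)+2+\alpha$ with $\deg(\s)=2$ (resp.\ the $N_4$ count $-Q-1+1+\alpha$ with the extra $X_{i,x_0}$ derivative) collapses to $-Q+\alpha$ in both cases. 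Once this uniform kernel bound is established, summing the finitely many multi-indices $\s$ of degree $2$ and applying the homogeneous-space fractional integration theorem finishes the proof, with the dichotomy $Q-p\alpha>0$ versus $<0$ coming verbatim from that theorem.
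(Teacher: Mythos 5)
Your proposal is correct and follows essentially the same route as the paper: the paper estimates the terms of the differentiated representation formula (Proposition \ref{esistderivL2}, which is exactly the $k=2$ collapse of Proposition \ref{krappres} that you describe), isolates the critical contributions carrying $X^\e_{i,\u}X^\e_{j,\u}u$ as a kernel of order $d_{\e,x_0}^{-Q+\alpha}$ acting on an $L^p$ density, and concludes by fractional integration on the homogeneous space in the case $Q-p\alpha>0$ and by H\"older's inequality in the case $Q-p\alpha<0$. Your exponent counts and the observation that the $P^{-1}$ terms vanish match the paper's argument.
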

\begin{proof} The proof follows from the representation of the
derivatives of $u\phi$ provided in Proposition \ref{esistderivL2}.
%Since the proof of the two different cases are similar, we will prove only
%the first one.
%

Let us consider the first integral in (\ref{representation1der})
We first note that, by the expression (\ref{defN1}) of $N_1$,
there exist constants $C_3$ and $C_4$ only dependent on $C_0$ such
that
$$|N_{1}(\x, x_0)|\leq C_3 + C_4\sum_{ij} |X_{i\u}X_{i\u} u|$$
for any $\x, x_0 \in K$.  On the other hand $N_1$ is a sum of
derivatives of the function $\phi$, constant in $K_2,$ the support
of the kernel $N_1$ is a subset of $K_3-K_2$. This implies that if
$x \in K_1$ and $\x\in supp(N_1)$, then $d_{\e, \u}(x,\x) \geq
d_{\e, \u}(K_1, K_3-K_2)$ and  the function
 $\nabla_{\s,\u}^\e(x_0)\G^\e_{x_0} (x,\x)$, is  bounded
 uniformly in $\e$, by condition (\ref{2u}). Then, for every $x_0\in K_1$
$$\Big|\int \nabla_{\s,\bar u}^\e\G^\e (x,\x) N_{1}(\x,
x_0)d\x\Big| \leq \int_{K_3} | N_{1}(\x, x_0)| d\x \leq C\Big( 1+
||X_{i\u}X_{i\u} u||_{L^1(K)}\Big)\leq C_1,
$$ where $C$ only depends on $C_0.$

The second term in (\ref{representation1der}) is  the convolution
of the fundamental solution with a  regular function,
$$\nabla^{\e}_{\s, \bar u} (x_0)\big(   P_{x_0}^{1}g(\xi)\, \phi(x_0\circ
\xi^{-1})\big)$$ whose $L^{\infty}$ norm only depends on
$||g||_{C^{1,\alpha}_{\u}}$  on the support $K$ of the function
$\phi$. 

In the third  term of (\ref{representation1der}), using the
property (\ref{t301}) of the Taylor polynomial, and the estimate
(\ref{2}) of the fundamental solution,  we obtain
$$ \int\limits_{\O} \Big|\nabla^\e_{\s, \bar
u}(x_0)\G^\e_{x_{0}}(\cdot,\x)\Big(g(\xi)-
P_{x_0}^{1}g(\x)\Big)\Big| \phi(\x)d\x\leq C \int\limits_{K_3}
d^{\alpha-Q}_{\e, x_0}(x_0, \x) d\x \leq C_1 $$ for a suitable
constant $C_1$ depending on $||g||_{C^{1,\alpha}_{\u}}$, and on
the compact set $K$.
Consequently,   these  terms belong to $L^\infty_{loc}$.

Using again (\ref{t301}) and (\ref{2})  the last two terms in
representation formula (\ref{representation1der}) can be estimated
by
\begin{equation}\label{granfinale}\int\limits_{\O}d^{-Q + \alpha }_{\e, x_0}(x_0, \x) |X^\e_{i,\bar u } X^\e_{j,\bar u
}u(\x)|d\x. \end{equation}
Thanks to Theorem \ref{fundam}, we can then apply the
standard theory of singular integrals and deduce that if
 $|X^\e_{i,\bar u } X^\e_{j,\bar u }u(\x)|\in L^p(K),$ with
 $Q-p\alpha>0$ then \eqref{caso1} follows.

In order to prove \eqref{stima23} it suffices to apply  H\"older inequality 
to \eqref{granfinale} and use the fact that 
 $Q-p\alpha<0$. This immediately leads  to  the desired $L^{\infty}$ bounds on the second derivatives of the solution.
 \end{proof}

\bigskip

\begin{proposition}\label {Chold2}\label{lineare}
Let us assume that $\u$ is of class $C^{\infty}(\Omega)$ and that
$u$ is a classical solution of
 $L_{\e, \bar u} u=g\in
C^{\infty}_{\u}(\O)$. Let us also assume that there exists a
compact set $K\subset \O$ and constant $C_0$ such that assumption
(\ref{stima1}) is satisfied, and such that
$$||u||_{C^{k-1,\alpha}_\u (K)} + ||\u||_{C^{k-1,\alpha}_\u(K)} +
||g||_{C^{k-2,\alpha}_\u(K)}\leq C_0.$$ Then, for any $2\le k\le
4$, for every compact set $K_1\subset\subset K$ there exists a
constant $C>0$ depending only on the choice of the compact sets,
$C_0, k$ and $\alpha$ such that
$$||u||_{C^{k,\alpha}_{\bar u
 }(K_1)}\le C.$$
\end{proposition}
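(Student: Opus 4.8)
The plan is to derive the estimate from Proposition \ref{kerestim} (in the form that allows an extra kernel, Remark \ref{osservazione}), applied to the representation of $u\phi$ provided by Proposition \ref{krappres}. The only hypothesis of Proposition \ref{kerestim} that is not immediately at hand is that $u\in C^{k-1}_{\u}$ with the second order intrinsic derivatives $X^\e_{i,\u}X^\e_{j,\u}u$ locally bounded: for $3\le k\le 4$ this is contained in the assumption $\|u\|_{C^{k-1,\alpha}_{\u}(K)}\le C_0$, since $k-1\ge 2$ forces $\nabla^{\e 2}_{\u}u\in C^{\alpha}_{\u}$, hence locally bounded; for $k=2$ one only has the $L^p$ bound on $\nabla^{\e 2}_{\u}u$ coming from (\ref{stima1}). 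Hence the first step, needed only when $k=2$, is to iterate Lemma \ref{hold1}: while $Q-p\alpha>0$ it turns an $L^p$ bound on the second derivatives into an $L^r$ bound with $r=Qp/(Q-p\alpha)$, and since $1/r=1/p-\alpha/Q$ the exponent crosses the threshold $p\alpha>Q$ after finitely many steps, at which point the second alternative of Lemma \ref{hold1} yields uniform $L^\infty(K')$ bounds on $\nabla^{\e 2}_{\u}u$ (hence, with the $C^{1,\alpha}_{\u}$ hypothesis, on $\nabla^\e_{\s,\u}u$ for all $deg(\s)\le 2$) on a compact $K'$ with $K_1\subset\subset K'\subset\subset K$; the hypotheses of Lemma \ref{hold1} remain in force along the iteration, and only finitely many shrinkings of the domain occur.

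Next I would fix compact sets $K_1\subset\subset K_2\subset\subset K_3\subset\subset K'$ and a cutoff $\phi\in C^\infty_0(\text{int}(K'))$ with $\phi\equiv 1$ on $K_2$, and use Proposition \ref{krappres} to write $u\phi$ in the form (\ref{representation2}). It then remains to check that the kernels satisfy the hypotheses of Proposition \ref{kerestim} and Remark \ref{osservazione} for the relevant $k$. For $N_1$ this is exactly the verification carried out in the proof of Proposition \ref{esistderivL2}: by (\ref{defN1}), $N_1$ is a sum of derivatives of $\phi$, hence supported in $K_3\setminus K_2$; it depends on $x_0$ only through $\nabla^\e_{\u}\u(x_0)$ and the frozen vector fields, which are H\"older in $x_0$ since $\u\in C^{1,\alpha}_{\u}$; and on $\x$ only through derivatives of $u$ of order $\le 2$, bounded by the first step --- so (\ref{N1k}) holds. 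The kernels $N_{2,k}$ and $N_{2,ks}$ are polynomials in the canonical coordinates whose coefficients are intrinsic derivatives, evaluated at $x_0$, of $g\in C^{k-2,\alpha}_{\u}$, of $b_{ij\u}=b_{ij}(\nabla^\e_{\u}\u)\in C^{k-2,\alpha}_{\u}$ (because $\u\in C^{k-1,\alpha}_{\u}$ and $b_{ij}$ is smooth), of $\u\in C^{k-1,\alpha}_{\u}$, and of $X^\e_{i,\u}X^\e_{j,\u}u\in C^{k-3,\alpha}_{\u}$ (because $u\in C^{k-1,\alpha}_{\u}$); by Remark \ref{r3062} these coefficients and all their derivatives are H\"older continuous in $x_0$, which is hypothesis (ii).

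The core of the proof is the decay of $N_{3,k}$ and $N_{4,ks}$, which I would obtain by the same bookkeeping as in Proposition \ref{esistderivL2}, now tracking orders of vanishing. In $N_{3,k}$ one has $g-P^{k-2}_{x_0}g=O(d_{\e,x_0}^{k-2+\alpha})$ by Theorem \ref{t301}; the product $(b_{ij\u}(\x)-b_{ij\u}(x_0))\big(X^\e_{i,\u}X^\e_{j,\u}u(\x)-P^{k-3}_{x_0}(X^\e_{i,\u}X^\e_{j,\u}u)(\x)\big)$ is again $O(d_{\e,x_0}^{k-2+\alpha})$, because for $k\ge 3$ the first factor is $O(d_{\e,x_0})$ (as $b_{ij\u}\in C^1_{\u}$) and the second is $O(d_{\e,x_0}^{k-3+\alpha})$ by Theorem \ref{t301}, while for $k=2$ the first is $O(d_{\e,x_0}^{\alpha})$ and the second reduces to $X^\e_{i,\u}X^\e_{j,\u}u(\x)=O(1)$ (since $P^{-1}_{x_0}=0$); and $(b_{ij\u}(\x)-P^{k-2}_{x_0}b_{ij\u}(\x))P^{k-3}_{x_0}(X^\e_{i,\u}X^\e_{j,\u}u)(\x)=O(d_{\e,x_0}^{k-2+\alpha})$. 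This gives (\ref{N3}), and (\ref{N3bis}) follows from the same computation, using (\ref{tosta}) to compare the Taylor polynomials of $g$, of $b_{ij\u}$ and of $X^\e_{i,\u}X^\e_{j,\u}u$ at $x_0$ and at $x$. The kernel $N_{4,ks}$ is treated likewise: its first summand is $O(d_{\e,x_0}^{k-1+\alpha})$ since $\u-P^{k-1}_{x_0}\u=O(d_{\e,x_0}^{k-1+\alpha})$, while in its second summand $\u-P^1_{x_0}\u=O(d_{\e,x_0}^{2})$ when $k\ge 3$ --- because $\u-P^1_{x_0}\u=(\u-P^{k-1}_{x_0}\u)+(P^{k-1}_{x_0}\u-P^1_{x_0}\u)$ and the last bracket starts at degree $2$ --- multiplied by $X^\e_{i,\u}X^\e_{j,\u}u-P^{k-3}_{x_0}(\cdots)=O(d_{\e,x_0}^{k-3+\alpha})$, hence again $O(d_{\e,x_0}^{k-1+\alpha})$ (and $O(d_{\e,x_0}^{1+\alpha})$ directly from $\u\in C^{1,\alpha}_{\u}$ when $k=2$), the difference estimate again coming from (\ref{tosta}).

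With all hypotheses verified, Proposition \ref{kerestim} together with Remark \ref{osservazione} gives $u\in C^{k,\alpha'}_{\u}(K_1)$ for every $\alpha'<\alpha$, with a constant depending only on $C_0$, $k$, $\alpha$ and the compact sets --- in particular uniform in $\e$ --- which is the assertion (read with $\alpha'<\alpha$, as in Proposition \ref{kerestim}). The step I expect to be delicate is precisely this last verification: keeping every product of Taylor remainders at the order prescribed by (\ref{N3})--(\ref{N3bis}) and by the conditions of Remark \ref{osservazione}, which forces one to exploit the extra order of vanishing of $\u-P^1_{x_0}\u$ (available because $\u$ is smooth, resp. the boundedness of $\nabla^{\e 2}_{\u}u$ when $k=2$) rather than only the $C^{1,\alpha}_{\u}$ bound on $\u$.
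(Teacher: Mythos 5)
Your proof is correct and follows essentially the same route as the paper: the paper's own (one-line) proof consists precisely in representing $u\phi$ via Proposition \ref{krappres} and verifying that the resulting kernels satisfy the hypotheses of Proposition \ref{kerestim} together with Remark \ref{osservazione}, which is exactly the computation you carry out. Your preliminary iteration of Lemma \ref{hold1} to upgrade the $L^p$ bound on $\nabla^{\e 2}_{\u}u$ to a local $L^\infty$ bound in the case $k=2$ is a detail the paper leaves implicit, and it fits within the same scheme.
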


\begin{proof} A direct computation  shows that the kernel in
representation formula in Proposition \ref{krappres} satisfy
assumptions of Proposition \ref {kerestim}.  
\end{proof}

\bigskip

\subsection{ A priori estimates of the solution of the nonlinear operator
}

We start with the follow iteration result:
\begin{lemma}\label{itera1}
Assume that $z$ is a smooth function   satisfying
\begin{equation}\label{eqz}
 \sum_{ij}a_{ij}(\nabla^\e_{\bar u} \bar u) X^\e_{i\bar u} X^\e_{j \bar u}
z +  f_0 =0 \text{ in } \Omega
\end{equation}
then the function  $v_{h}= X^\e_{h, \bar u} z$
satisfies the equation
$$ \sum_{ij}a_{ij}(\nabla^\e_{\bar u}\u) X^\e_{i\bar u} X^\e_{j
\bar u} v_h   + f_h=0,$$ on the same set $\Omega$, where $f_h$
depends on $\nabla^{\e 2 } _{\bar u}z,$ $\p_{2n}\nabla^\e_{\bar
u}z$, $X^\e_{h, \bar u} f_0$, $\nabla^{\e 2}_{\bar u}\u$.
\end{lemma}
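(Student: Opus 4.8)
The plan is to differentiate equation \eqref{eqz} along the vector field $X^\e_{h,\u}$ and keep careful track of all commutator terms, exactly as in the proof of Lemma \ref{equ2n} and the subsequent lemma for $z=X^\e_{k,u}u$. First I would write $X^\e_{h,\u}$ applied to the whole equation: since the coefficients $a_{ij}(\nabla^\e_{\u}\u)$ depend on $\u$ (a fixed function, not on $z$), applying $X^\e_{h,\u}$ produces one ``main'' term in which $X^\e_{h,\u}$ hits $z$, plus terms in which $X^\e_{h,\u}$ differentiates the coefficient $a_{ij}(\nabla^\e_{\u}\u)$, plus commutator terms coming from $[X^\e_{h,\u},X^\e_{i,\u}]$ and $[X^\e_{h,\u},X^\e_{j,\u}]$. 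Schematically,
$$
0=X^\e_{h,\u}\Big(\sum_{ij}a_{ij}(\nabla^\e_{\u}\u)X^\e_{i,\u}X^\e_{j,\u}z+f_0\Big)
=\sum_{ij}a_{ij}(\nabla^\e_{\u}\u)X^\e_{i,\u}X^\e_{j,\u}v_h+(\text{lower order}),
$$
and the task is to identify the remainder as an admissible $f_h$.

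The key steps, in order, would be: (1) compute $X^\e_{h,\u}\big(a_{ij}(\nabla^\e_{\u}\u)\big)=a_{ij}'(\nabla^\e_{\u}\u)\cdot X^\e_{h,\u}(\nabla^\e_{\u}\u)$, which involves second order intrinsic derivatives $\nabla^{\e 2}_{\u}\u$ of the fixed function; since $\u$ is smooth these are harmless but they must be recorded among the quantities $f_h$ is allowed to depend on. (2) Move $X^\e_{h,\u}$ past the two factors $X^\e_{i,\u}X^\e_{j,\u}$ using the commutator identities of Remark \ref{calcolobraket}; each commutator $[X^\e_{h,\u},X^\e_{\cdot,\u}]$ is a multiple of $\p_{2n}$ (with coefficient $1$, or $X^\e_{\cdot,\u}\u$, depending on the indices), so the commutator terms contribute expressions of the form $\p_{2n}X^\e_{j,\u}z$ and $X^\e_{i,\u}\p_{2n}z$, i.e. they depend on $\p_{2n}\nabla^\e_{\u}z$. (3) Collect the term $X^\e_{h,\u}f_0$ from differentiating $f_0$. (4) Bundle everything except $\sum_{ij}a_{ij}(\nabla^\e_{\u}\u)X^\e_{i,\u}X^\e_{j,\u}v_h$ into $f_h$ and check that $f_h$ is a (polynomial, hence smooth) expression in $\nabla^{\e 2}_{\u}z$, $\p_{2n}\nabla^\e_{\u}z$, $X^\e_{h,\u}f_0$, and $\nabla^{\e 2}_{\u}\u$, together with the bounded coefficients $a_{ij}$, $a_{ij}'$ and the (bounded, by smoothness of $\u$) commutator coefficients.

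The main obstacle I expect is purely bookkeeping: in contrast with Lemma \ref{equ2n}, here one differentiates by $X^\e_{h,\u}$ rather than by $\p_{2n}$, so one does not get the clean adjoint trick $\p_{2n}X^\e_{i,u}u=-(X^\e_{i,u})^*\p_{2n}u$; instead one must carefully list which commutators are nonzero (only the pairs identified in Remark \ref{calcolobraket}) and verify that the ``worst'' term produced, namely the one where $[X^\e_{h,\u},X^\e_{i,\u}]=X^\e_{h,\u}\u\,\p_{2n}$ or $=-X^\e_{i,\u}\u\,\p_{2n}$ acts on $X^\e_{j,\u}z$, still only involves $\p_{2n}\nabla^\e_{\u}z$ and not any third order derivative of $z$ — which is the point, since otherwise the iteration scheme in the next subsection would not close. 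The degree count from Lemma \ref{duederiv} can be invoked to confirm that no term of degree higher than the ones listed appears. Once this is checked, the representation of $f_h$ is immediate and the lemma follows; no delicate estimates are needed at this stage, only the algebraic identity.
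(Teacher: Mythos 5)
Your proposal follows exactly the paper's argument: differentiate \eqref{eqz} along $X^\e_{h,\u}$, producing the main term $\sum_{ij}a_{ij}(\nabla^\e_{\u}\u)X^\e_{i,\u}X^\e_{j,\u}v_h$, the coefficient-derivative term $\sum_{ijk}\p_{p_k}a_{ij}\,X^\e_{h,\u}X^\e_{k,\u}\u\,X^\e_{i,\u}X^\e_{j,\u}z$, the two commutator terms (each a multiple of $\p_{2n}$ acting at first order on $z$), and $X^\e_{h,\u}f_0$, then collecting the remainder as $f_h$. The approach and the identification of the admissible dependencies of $f_h$ coincide with the paper's proof, so the proposal is correct.
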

\begin{proof}
Differentiating the equation (\ref{eqz}) with respect to $X^\e_{h, \bar u}$
 we obtain $$0= X^\e_{h, \bar u}
\Big(\sum_{ij}a_{ij}(\nabla^\e_{\bar u}\bar u) X^\e_{i\bar u}
X^\e_{j \bar u} z\Big) + X^\e_{h, \bar u} f_0 =$$ $$= \sum_{ijk}
\p_{p_k}a_{ij} X^\e_{h, \bar u} X^\e_{k \bar u} \bar u X^\e_{i\bar
u} X^\e_{j \bar u} z + \sum_{ij}a_{ij}(\nabla^\e_{\bar u}\bar u)
[X^\e_{h, \bar u} ,X^\e_{i\bar u}] X^\e_{j \bar u} z + $$ $$+
\sum_{ij}a_{ij}(\nabla^\e_{\bar u}\bar u) X^\e_{i\bar u}[X^\e_{h,
\bar u} , X^\e_{j \bar u}]z + \sum_{ij}a_{ij}(\nabla^\e_{\bar
u}\bar u) X^\e_{i\bar u} X^\e_{j \bar u} (X^\e_{h, \bar u} z) +
X_{h, \bar u} f_0.$$
 Then $$
\sum_{ij}a_{ij}(\nabla^\e_{\bar u}u) X^\e_{i\bar u} X^\e_{j \bar
u} v_h +   f_h=0,$$ where
$$f_h=
 \sum_{ij }^{ 2n}
\p_{p_{k}}a_{ij} X^\e_{h, \bar u} X^\e_{k \bar u}\bar u \
X^\e_{i\bar u} X^\e_{j \bar u} z +  $$
$$ \sum_{ij}a_{ij}(\nabla^\e_{\bar u}u) \Big(X^\e_{h, \bar u} b_i -
X^\e_{i, \bar u} b_h\Big) \p_{2n} X^\e_{j \bar u} z + $$ $$+
\sum_{ij}a_{ij}(\nabla^\e_{\bar u}u) X^\e_{i\bar u}\Big(X^\e_{h,
\bar u} b_j - X^\e_{j, \bar u} b_h\Big) \p_{2n}z +$$$$+
\sum_{ij}a_{ij}(\nabla^\e_{\bar u}u) \Big(X^\e_{h, \bar u} b_j -
X^\e_{j, \bar u} b_h\Big) \p_{2n}X^\e_{i\bar u}z $$$$+
\sum_{ij}a_{ij}(\nabla^\e_{\bar u}u) \Big(X^\e_{h, \bar u} b_j -
X^\e_{j, \bar u} b_h\Big) \p_{2n}b_i  \p_{2n}z +  X^\e_{h, \bar u}
f_0.$$
Here the function $f_h$ clearly depends on $\nabla^{\e
2}_{\bar u}z,$ $\p_{2n}\nabla^\e_{\bar u}z$, $X^\e_{h, \bar u}
f_0$,  $\nabla^{\e 2}_{\bar u}\u$.
\end{proof}

\bigskip

\begin{lemma}\label{itera2n}
Assume that $z$ is a smooth function  satisfying
\begin{equation}\label{eqz1}
 \sum_{ij}a_{ij}(\nabla^\e_{\bar u} \bar u) X^\e_{i\bar u} X^\e_{j \bar u}
z +  f_0=0  \text{ in } \Omega \end{equation} then the function
$v=\p_{2n}z,$ satisfies
  $$\sum_{ij}a_{ij}(\nabla^\e_{\bar u}u) X^\e_{i\bar u} X^\e_{j \bar
u} v  + f=0,$$ where $f $ depends on $\nabla^{\e 2}_{\bar u}z,$
$\p_{2n}\nabla^\e_{\bar u}z$ , $\p_{2n}f_0 $.
\end{lemma}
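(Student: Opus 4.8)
The plan is to follow the proof of Lemma~\ref{itera1} almost verbatim, differentiating the equation \eqref{eqz1} along $\p_{2n}$ instead of along $X^\e_{h,\bar u}$. The bookkeeping here is in fact lighter: $\p_{2n}$ commutes with $X^\e_{i,\bar u}$ for every $i\neq 2n-1$ (the coefficients of those fields are independent of $x_{2n}$), and the only nonzero commutator, recorded in Remark~\ref{calcolobraket}, is $[\p_{2n},X^\e_{2n-1,\bar u}]=(\p_{2n}\bar u)\,\p_{2n}$, which is again a multiple of $\p_{2n}$. Thus, unlike the commutators $[X^\e_{h,\bar u},X^\e_{i,\bar u}]$ appearing in Lemma~\ref{itera1}, no genuinely new first-order directions are produced.

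Concretely, I would apply $\p_{2n}$ to \eqref{eqz1}. Differentiating the coefficients gives $\sum_{i,j,k}\p_{p_k}a_{ij}(\nabla^\e_{\bar u}\bar u)\,\p_{2n}\!\big(X^\e_{k,\bar u}\bar u\big)\,X^\e_{i,\bar u}X^\e_{j,\bar u}z$, where $\p_{2n}(X^\e_{k,\bar u}\bar u)=X^\e_{k,\bar u}(\p_{2n}\bar u)+\delta_{k,2n-1}(\p_{2n}\bar u)^2$ by the commutator above. For the differential term I would move $\p_{2n}$ past $X^\e_{i,\bar u}$ and then past $X^\e_{j,\bar u}$, obtaining, with $v=\p_{2n}z$,
\[
\p_{2n}\big(X^\e_{i,\bar u}X^\e_{j,\bar u}z\big)=X^\e_{i,\bar u}X^\e_{j,\bar u}v+\delta_{i,2n-1}(\p_{2n}\bar u)\,\p_{2n}X^\e_{j,\bar u}z+\delta_{j,2n-1}\big(X^\e_{i,\bar u}\p_{2n}\bar u\big)\,\p_{2n}z+\delta_{j,2n-1}(\p_{2n}\bar u)\,X^\e_{i,\bar u}\p_{2n}z.
\]
Collecting the $X^\e_{i,\bar u}X^\e_{j,\bar u}v$ contributions isolates the principal part $\sum_{i,j}a_{ij}(\nabla^\e_{\bar u}\bar u)X^\e_{i,\bar u}X^\e_{j,\bar u}v$; everything else, together with $\p_{2n}f_0$, is moved to the right-hand side and \emph{defines} $f$.

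It then remains only to read off the asserted dependence of $f$. Each of its summands is, up to coefficients that are smooth functions of $\bar u$ and of its derivatives (the entries of $\nabla^\e_{\bar u}\bar u$, the quantity $\p_{2n}\bar u$ and its horizontal derivatives), a combination of $X^\e_{i,\bar u}X^\e_{j,\bar u}z$, i.e.\ of $\nabla^{\e 2}_{\bar u}z$; of $\p_{2n}X^\e_{j,\bar u}z$ and $X^\e_{i,\bar u}\p_{2n}z$, i.e.\ of $\p_{2n}\nabla^\e_{\bar u}z$ up to the lower-order term $(\p_{2n}\bar u)\p_{2n}z$; and of $\p_{2n}f_0$. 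Since in this linearized step $\bar u$ is kept $C^\infty$, the coefficients just listed are bounded and act as fixed data, so $f$ depends only on the three quantities in the statement. I do not foresee any real obstacle; the only point needing minimal care is keeping the commutator bookkeeping straight, which is why I would model the write-up on that of Lemma~\ref{itera1}.
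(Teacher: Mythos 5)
Your proposal is correct and follows essentially the same route as the paper: differentiate \eqref{eqz1} along $\p_{2n}$, use $[\p_{2n},X^\e_{2n-1,\bar u}]=(\p_{2n}\bar u)\p_{2n}$ to commute $\p_{2n}$ past the second-order part, and absorb the coefficient-differentiation and commutator terms into $f$. The expansion of $\p_{2n}\big(X^\e_{i,\bar u}X^\e_{j,\bar u}z\big)$ you write down matches the paper's term-by-term computation.
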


\begin{proof}
Differentiating the equation  (\ref{eqz1}) with respect to $\p_{2n}$
we obtain
$$0 = \p_{2n}\Big(\sum_{ij}a_{ij}(\nabla^\e_{\bar u}\u) X^\e_{i\bar
u} X^\e_{j \bar u} z + f_0\Big)=$$ $$= \sum_{ijk} \p_{p_k}a_{ij}
\p_{2n}X^\e_{k \bar u} \u X^\e_{i\bar u} X^\e_{j \bar u} z +
\sum_{ij}a_{ij}(\nabla^\e_{\bar u}\u) [\p_{2n},X^\e_{i\bar u}]
X^\e_{j \bar u} z + $$ $$+  \sum_{ij}a_{ij}(\nabla^\e_{\bar u}\u)
X^\e_{i\bar u}[\p_{2n}, X^\e_{j \bar u}]z +
\sum_{ij}a_{ij}(\nabla^\e_{\bar u}\u) X^\e_{i\bar u} X^\e_{j \bar
u} (\p_{2n}z) + \p_{2n}f_0.$$ The latter can be rewritten as
$$  \sum_{ij}a_{ij}(\nabla^\e_{\bar u}u) X^\e_{i\bar u} X^\e_{j \bar
u} v + f=0,$$ where  the function $$f= \sum_{ijk} \p_{p_k}a_{ij} \p_{2n}X^\e_{k
\bar u} \u X^\e_{i\bar u} X^\e_{j \bar u} z +
\sum_{ij}a_{ij}(\nabla^\e_{\bar u}\u) \delta_{i 2n-1}\p_{2n} \u
\p_{2n} X^\e_{j \bar u} z + $$ $$+ \sum_{ij}a_{ij}\delta_{j
2n-1}(\nabla^\e_{\bar u}\u) X^\e_{i\bar u}(\p_{2n} \bar u
\p_{2n}z) + \sum_{ij}a_{ij}(\nabla^\e_{\bar u}u) X^\e_{i\bar u}
X^\e_{j \bar u} v + \p_{2n}f_0 $$
 depends on  $\nabla^\e_{\bar u} v,$ $\nabla^{\e 2}_{\bar
u}  z$, $ \p_{2n}\nabla^\e_{\bar u}z ,$ $\nabla^{\e 2}_{\bar u}
\u$.
\end{proof}

In order to
 study of the nonlinear equation
we  apply the previous lemma  with $u=\u$, 
\begin{lemma}\label{iteras}
Let  $\s$  be  a multi-index with all components smaller than $2n$.
Then the function $v_\s=\nabla^\e_{\s, u}u$ satisfies
$$\sum_{ij}a_{ij}(\nabla^\e_{ u}u) X^\e_{i u} X^\e_{j  u}
v_\s + f_\s=0,$$ where $f_\s$ depends on $\nabla^{\e (k+1)}_{ u}
u,$ $\p_{2n}\nabla^{\e k}_{ u}u$ with  $k=deg(\s)$.
\end{lemma}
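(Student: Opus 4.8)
The plan is to obtain the statement by iterating Lemma \ref{itera1} with the choice $\bar u=u$, arguing by induction on $k=\deg(\s)=\#(\s)$ (the two coincide because every component of $\s$ is $\le 2n-1$, hence of degree $1$). The induction starts from the observation that, in the non-divergence form \eqref{nondiv}, the equation $L_\e u=0$ reads $\sum_{ij}a_{ij}(\nabla^\e_u u)X^\e_{i,u}X^\e_{j,u}u=0$, that is, $z=u$ is a smooth solution of \eqref{eqz} with $\bar u=u$ and $f_0=0$. Throughout one uses that $a_{ij}$ and its derivatives $\p_{p_k}a_{ij}$ are smooth and bounded, by the explicit formula in \eqref{nondiv}, so that the extra coefficients produced along the way are harmless.

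For $\deg(\s)=1$, say $\s=(h)$ with $h\le 2n-1$, applying Lemma \ref{itera1} to $z=u$, $f_0=0$, with index $h$ gives at once that $v_\s=X^\e_{h,u}u=\nabla^\e_{\s,u}u$ solves $\sum_{ij}a_{ij}(\nabla^\e_u u)X^\e_{i,u}X^\e_{j,u}v_\s+f_\s=0$, where $f_\s$ depends on $\nabla^{\e 2}_u u$ and $\p_{2n}\nabla^\e_u u$ (the term $X^\e_{h,u}f_0$ vanishes); since $k=1$ here, this is precisely the asserted dependence.

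For the inductive step, write $\s=(\s_1,\bar\s)$ with $\#(\bar\s)=k-1$ and all components $\le 2n-1$. By the inductive hypothesis, $v_{\bar\s}=\nabla^\e_{\bar\s,u}u$ is a smooth function solving $\sum_{ij}a_{ij}(\nabla^\e_u u)X^\e_{i,u}X^\e_{j,u}v_{\bar\s}+f_{\bar\s}=0$ with $f_{\bar\s}$ depending on $\nabla^{\e k}_u u$ and $\p_{2n}\nabla^{\e (k-1)}_u u$. Hence Lemma \ref{itera1} applies with $z=v_{\bar\s}$, $f_0=f_{\bar\s}$ and index $h=\s_1$, and yields that $v_\s=X^\e_{\s_1,u}v_{\bar\s}=\nabla^\e_{\s,u}u$ satisfies the asserted linearized equation with a remainder $f_\s$ built out of $\nabla^{\e 2}_u v_{\bar\s}$, $\p_{2n}\nabla^\e_u v_{\bar\s}$, $X^\e_{\s_1,u}f_{\bar\s}$ and $\nabla^{\e 2}_u u$. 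Substituting $v_{\bar\s}=\nabla^\e_{\bar\s,u}u$ and using the inductive form of $f_{\bar\s}$, one reads off that each of these groups involves at most $k+1$ intrinsic derivatives of $u$, or at most one $\p_{2n}$ acting on a $k$-th order intrinsic derivative of $u$, i.e. exactly $\nabla^{\e (k+1)}_u u$ and $\p_{2n}\nabla^{\e k}_u u$; this closes the induction.

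I expect the only delicate point to be this last bookkeeping: one has to check that applying $X^\e_{\s_1,u}$ to $f_{\bar\s}$, and differentiating $v_{\bar\s}$ by $\nabla^{\e 2}_u$, never create a derivative of $u$ of order $>k+1$, nor a $\p_{2n}$ applied to something of order $>k$. This is where the commutator table of Remark \ref{calcolobraket} is used: each $[X^\e_{h,u},X^\e_{i,u}]$ equals either a constant or the bounded factor $X^\e_{\ast,u}u$ times $\p_{2n}$, so it trades one horizontal derivative for one $\p_{2n}$ without pushing the total order past the advertised bounds. As no analytic estimate is involved — the statement is a pure differentiation identity — this combinatorial check is essentially the whole content of the proof.
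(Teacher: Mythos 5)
Your argument is correct and is essentially the paper's own proof: induction on $k=\deg(\s)$, with the base case obtained by applying Lemma \ref{itera1} to $z=u$, $f_0=0$ (using the non-divergence form \eqref{nondiv} of $L_\e u=0$ with $\bar u=u$), and the inductive step obtained by writing $\s=(\s_1,\bar\s)$ and applying Lemma \ref{itera1} to $z=\nabla^\e_{\bar\s,u}u$ with $f_0=f_{\bar\s}$. The final bookkeeping you flag — in particular that $X^\e_{\s_1,u}\p_{2n}\nabla^{\e k}_u u$ differs from $\p_{2n}X^\e_{\s_1,u}\nabla^{\e k}_u u$ only by a term $\delta_{\s_1,2n-1}\,\p_{2n}u\,\p_{2n}\nabla^{\e k}_u u$ of admissible order — is exactly the check the paper performs.
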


\begin{proof}
By Lemma \ref {itera1} the assertion  is true for the derivatives of order one.
Assume that it is true for $deg(\s)=k.$
Then, let us consider a multi-index $\s$ of degree  $k+1$.
 By
definition $\s=(\s_1, \tilde \s),$ with $deg(\tilde \s)=k $. Then
by inductive assumption the function $z= \nabla^\e_{\tilde \s,
u}u$ satisfies
 $$\sum_{ij}a_{ij}(\nabla^\e_{ u}u) X^\e_{i u} X^\e_{j u}
z + f_0=0,$$ where $f_0$ depends on $\nabla^{\e k+1 } _u u,$
$\p_{2n}\nabla^{\e k}_{ u}u$. Applying Lemma \ref{itera1} we
deduce that the function $v_\s= X^e_{\s_1 u} z$ is a solution of
$$ \sum_{ij}a_{ij}(\nabla^\e_{  u}u) X^\e_{i\bar u}
X^\e_{j u} v_\s +   f_{\s_1}=0,$$ where $f_{\s_1}$ depends on
$\nabla^{\e 2}_{  u}z,$ $\p_{2n}\nabla^\e_{\u}z$ and $X_{\s_1, u}
f_0$. Since  $f_0$ depends on $\nabla^{\e k+1}_{ u} u,$
$\p_{2n}\nabla^{\e k}_{ u}u$, then $X_{\s_1, u} f_0$ depends on
$X^\e_{\s_1,  u}\nabla^{\e k+1}_{u} u$ and
$$X^\e_{\s_1,
u}\p_{2n}\nabla^{\e k}_{ u}u = \delta_{\s_1 2n-1} \p_{2n} u
\p_{2n}\nabla^{\e k}_{  u}u + \p_{2n}X^\e_{\s_1, u}\nabla^{\e
k}_{ u}u .$$
 \end{proof}
 
 \bigskip

%
%
%
% QUESTA SEZIONE E' NONLINERE
% STIAMO SCEGLIENDO u=\u
%
%

\begin{theorem}\label{step1}
Let   $u$ be  a smooth  classical solution of the nonlinear
equation $L_{\e} u=0 $. Let us fix a compact set $K\subset\subset
\Omega$ and assume that there exist constants $\alpha <1,$ $p>1$
and  $C_0>0$ such that
\begin{equation}\label{stimausata}  ||u||_{C^{1,\alpha}_{u }(K)}+
 \sum_{deg(\s) =2}||\nabla^\e_{\s, u}u||_{L^p(K)}\le C_0. \end{equation}
 Then, for every $\beta<1$, for every compact set $K_1\subset\subset K$
 there exists a constant $
\tilde C_\beta$ such that
$$||u||_{C^{3,\beta}_{u
 }(K_1)} + ||\p_{2n}u||_{C^{2,\beta}_{ u
 }(K_1)} \le
\tilde C_\beta.$$
\end{theorem}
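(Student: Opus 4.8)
I would run a finite bootstrap, taking the ``fixed function'' to be $\u=u$ itself, so that $L_{\e,u}u=L_\e u=0$ by \eqref{nondiv}, and so that hypothesis \eqref{stimausata} is exactly hypothesis \eqref{stima1} of Lemma \ref{hold1} (with $\bar u=u$ and $g\equiv0$). Every estimate produced will be uniform in $\e$, because each result invoked is.

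\textbf{From $L^p$ second derivatives to $C^{2,\a'}_u$.} As long as $Q-p\a>0$, Lemma \ref{hold1} upgrades $\sum_{\deg\s=2}\|\nabla^\e_{\s,u}u\|_{L^p}\le C_0$ to the same bound with $L^p$ replaced by $L^r$, $r=Qp/(Q-p\a)>p$, on a slightly smaller compact set; iterating finitely many times along a nested family of compacts the exponent passes $Q/\a$, and a last application of Lemma \ref{hold1} gives $\sum_{\deg\s=2}\|\nabla^\e_{\s,u}u\|_{L^\infty(K')}\le C$ for some $K_1\subset\subset K'\subset\subset K$. Combined with $\|u\|_{C^{1,\a}_u(K')}\le C_0$ and $g\equiv0$, this is precisely the hypothesis of Proposition \ref{esistderivL2} with $\bar u=u$, whose conclusion is $\sum_{\deg\s=2}\|\nabla^\e_{\s,u}u\|_{C^{\a'}_u(K'')}\le C$ for every $\a'<\a$, i.e. $u\in C^{2,\a'}_u$; in particular $\p_{2n}u\in C^{\a'}_u$ and the coefficients $a_{ij}(\nabla^\e_u u)$ of $L_{\e,u}$ lie in $C^{1,\a'}_u$.

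\textbf{From $C^{2,\a'}_u$ to $C^{3,\beta}_u$.} Next I would differentiate the equation. By Lemma \ref{iteras}, for every $h\le2n-1$ the smooth function $v_h=X^\e_{h,u}u$ solves $L_{\e,u}v_h=-f_h$, and by Lemma \ref{itera2n} (with $z=u$, $f_0\equiv0$) the function $\p_{2n}u$ solves a linear equation $L_{\e,u}(\p_{2n}u)=-f$, where $f_h$ and $f$ are explicit polynomial expressions in $a_{ij}(\nabla^\e_u u)$, $\p_{p_k}a_{ij}(\nabla^\e_u u)$ and in intrinsic derivatives of $u$ whose degree stays within the range $\le4$ in which Proposition \ref{Chold2} and Remark \ref{r3062} are available. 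I would then bootstrap these linear equations: a Caccioppoli estimate for the uniformly subelliptic, smooth-coefficient operator $L_{\e,u}$, together with the $L^2$ bound on $\p_{2n}\nabla^\e_u u$ coming from Proposition \ref{Calphaestimate} and the bounds just obtained, places $\nabla^{\e 2}_u v_h$ and $\nabla^{\e 2}_u(\p_{2n}u)$ in $L^p_{\loc}$, and iterating as above in $L^\infty_{\loc}$; consequently $u\in C^{2,\g}_u$ and $\p_{2n}u\in C^{1,\g}_u$ for every $\g<1$. With the H\"older exponent thus pushed arbitrarily close to $1$, the right-hand sides $f_h,f$ are controlled in $C^{1,\g}_u$ — here one uses the commutator identities of Remark \ref{calcolobraket} and expands $\p_{2n}=[X^\e_{1,u},X^\e_{n,u}]$ to keep all constants $\e$-uniform — and a final application of Proposition \ref{Chold2} to $L_{\e,u}v_h=-f_h$ and $L_{\e,u}(\p_{2n}u)=-f$ yields $\|v_h\|_{C^{2,\beta}_u(K_1)}+\|\p_{2n}u\|_{C^{2,\beta}_u(K_1)}\le\tilde C_\beta$, which by Definition \ref{d301} is exactly the asserted $\|u\|_{C^{3,\beta}_u(K_1)}+\|\p_{2n}u\|_{C^{2,\beta}_u(K_1)}\le\tilde C_\beta$.

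\textbf{Main obstacle.} Everything substantive is in the last step. The right-hand sides produced by Lemmas \ref{iteras} and \ref{itera2n} contain ``mixed'' terms such as $\p_{2n}\nabla^\e_u u$ of the same intrinsic degree as the quantities being estimated, so the bootstrap cannot be closed one derivative at a time: one has to improve the whole family $\{\nabla^\e_{\s,u}u:\deg\s\le k\}$ simultaneously, at each stage verifying that every term appearing on the right is already controlled in the appropriate (slightly weaker) $C^{k-2,\g}_u$ class, that no degree exceeds $4$, and — most delicately — that all constants stay uniform in $\e$, which forces the expansion of $\p_{2n}$ through the $\e$-free commutator $[X^\e_{1,u},X^\e_{n,u}]$ rather than through $\e^{-1}X^\e_{2n,u}$.
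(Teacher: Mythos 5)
Your first half is exactly the paper's argument: iterate Lemma \ref{hold1} with $\u=u$, $g\equiv 0$ until the exponent passes $Q/\a$, land in $L^\infty_{\loc}$ for the second derivatives, and feed this into Proposition \ref{esistderivL2} to get $u\in C^{2,\a'}_u$. (One small point you skip: the theorem asserts the bound for \emph{every} $\beta<1$, not just $\beta<\a$; the paper gets this by observing that the $L^\infty$ bound on $\nabla^{\e 2}_u u$ makes $\nabla^\e_u u$ Lipschitz with respect to $d_{\e,u}$, hence $C^\beta_u$ for all $\beta<1$, \emph{before} invoking Proposition \ref{esistderivL2}.)

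The second half is where you depart from the paper, and where the argument does not close. You differentiate the equation via Lemmas \ref{iteras} and \ref{itera2n} and try to apply Schauder estimates to $v_h=X^\e_{h,u}u$ and to $\p_{2n}u$. But, as you yourself observe, the right-hand sides $f_h$ and $f$ contain $\p_{2n}\nabla^\e_u u$, a quantity of intrinsic degree $3$; to place $f_h$ in $C^{0,\beta}_u$ (the minimal input for Proposition \ref{Chold2} with $k=2$) you already need $u\in C^{3,\beta}_u$, and your claim to control $f_h$ in $C^{1,\g}_u$ would even require degree-$4$ information. Naming this as ``the main obstacle'' and saying one must ``improve the whole family simultaneously'' is not a proof: no mechanism is supplied that breaks the circularity. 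The paper avoids the issue entirely by never differentiating the equation at this stage: Proposition \ref{Chold2} is applied \emph{to $u$ itself}, with $\u=u$, $g=0$, first with $k=3$ and then with $k=4$. Its hypotheses require only $u\in C^{k-1,\beta}_u$ (which is exactly what the previous step supplies), because the parametrix representation of Proposition \ref{krappres} is built from Taylor polynomials of order $\le k-1$ of $\u$ and order $\le k-3$ of $X^\e_{i,\u}X^\e_{j,\u}u$ — no third-order derivative of $u$ enters the kernels as data. The step $k=4$ is also what delivers $\p_{2n}u\in C^{2,\beta}_u$, since $\p_{2n}$ has degree $2$. The differentiated equations of Lemmas \ref{itera1}--\ref{iteras} are needed only later, in Theorem \ref{ultimo}, where the inductive hypothesis genuinely provides the regularity of the right-hand sides; importing them here, without that hypothesis, is what creates the gap.
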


\begin{proof} We first prove that for every $r>1$ for every compact
set $K_2$ such that $K_1 \subset\subset K_2\subset\subset K$ there
exists a constant $C_r$, only depending on $C_0$ and on the choice
of the compact sets, such that for every multi-index $\s$ of
degree  $2$, we have \begin{equation}\label{Lr}||\nabla^\e_{\s, u}
u||_{L^r(K_2)}\leq C_r.\end{equation} Indeed, for every compact
set $K_3$ such that $K_2 \subset\subset K_3\subset\subset K$ we
can apply the first assertion of Lemma \ref {hold1}, with $u=\u$
and we obtain
\begin{equation} ||\nabla^\e_{\s, u}u||_{L^{r_1}(K_3)}\le C_{r_1},
\end{equation}
where $r_1=\frac{n2 }{n-2\,\alpha}>2$. If $n-r_1\alpha
>0$ we can  apply again Lemma \ref {hold1} on a new compact set
compact set $K_4$ such that $K_2 \subset\subset K_4\subset\subset
K_3$ and we have
\begin{equation} ||\nabla^\e_{\s, \bar u}u||_{L^{r_2}(K_4)}\le C_{r_2},
\end{equation}
with  $$r_2=\frac{nr_1 }{n-r_1\,\alpha}=\frac{2n
}{n-4\,\alpha}>r_1.$$  For every fixed number $r$, after a finite
number of iterations of this same argument, we can prove the
estimate (\ref{Lr}).

Consequently by (\ref{stima23}) we have 
$||\nabla^\e_{\s, \bar u}u||_{L^\infty_{loc}}\le C$ 
%of the previous estimate and the Poincar\'e
%inequality \cite{MontMorb} implies that 
then  for every compact set
$K_5$ such that $K_1\subset \subset K_2$  and for every $\beta<1$
there exists a constant $\tilde C_\beta$ such that
$$||\nabla^\e_{\u}u||_{C^\beta _{\u}(K_5)}\leq \tilde C_\beta.$$
 As a consequence of Proposition  \ref{esistderivL2} we
deduce that for every $\beta<1$ for every compact set $K_6$ such
that $K_1\subset \subset K_6\subset \subset K_5$ there exists a
constant $\tilde C_\beta$ such that
$$||u||_{C^{2,\beta}_\u (K_6)}\leq \tilde C_\beta.$$
By Proposition  \ref{Chold2} we deduce that for every $\beta<1$
and for every compact set $K_7$ such that $K_1\subset \subset
K_7\subset \subset K_6$ there exists a constant $\tilde C_\beta$
such that
$$||u||_{C^{3,\beta}_\u(K_7)}\leq \tilde C_\beta.$$
 Applying again the same  proposition  with $k=4,$  we deduce
that
$$||u||_{C^{4,\beta}_\u (K_1)}\leq \tilde C_\beta,$$
which implies in particular that  there exists a constant $\tilde
C_\beta$ such that
$$ ||u||_{C^{3,\beta}_\u(K_1)}  + ||\p_{2n}u||_{C^{2,\beta}_{\bar u
 }(K_1)} \le \tilde C_\beta.$$
\end{proof}

\begin{theorem}\label{ultimo}
Let   $u$ be  a smooth  classical solution of $L_{\e} u=0 $. Let
us also assume that assumption (\ref{stimausata}) is satisfied.
Then, for any compact set $K_1 \subset \subset K,$ for every $
k\in N$ and $\alpha<1$, there exists a constant $C>0$ depending
only on $C_0, k, \alpha$ and $K_1$ such that
$$||u||_{C^{k,\alpha}_{ u
 }(K_1)}\le C.$$
\end{theorem}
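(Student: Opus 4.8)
The plan is a bootstrap by induction on the order $k$, carried out so that all constants remain independent of $\e$. Theorem \ref{step1} supplies the base case: under hypothesis \eqref{stimausata} its proof already produces, for every $\beta<1$ and every compact $K'\subset\subset K$, a bound $\|u\|_{C^{4,\beta}_u(K')}\le C$ with $C$ independent of $\e$. So it suffices to prove that, for every $m\ge 4$, the statement ``$\|u\|_{C^{m,\beta}_u(K')}\le C$ for all $K'\subset\subset K$ and all $\beta<1$, uniformly in $\e$'' implies the same with $m$ replaced by $m+1$. One cannot conclude this by invoking Proposition \ref{lineare} (or \ref{esistderivL2}) once more on $u$ itself, since those yield regularity only up to order $4$; the extra orders must be extracted from suitable derivatives of $u$.

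The inductive step goes as follows. Fix $k\in\{2,3,4\}$ (e.g. $k=3$) and a multi-index $\s$ with $deg(\s)=m+1-k$, so that $u\in C^{m,\beta}_u$ gives $v_\s=\nabla^\e_{\s,u}u\in C^{k-1,\beta}_u$. By the iteration Lemmas \ref{iteras}, \ref{itera1} and \ref{itera2n}, applied with $\bar u=u$ (legitimate since $L_\e u=L_{\e,u}u=0$), the function $v_\s$ solves in $\O$ a linearized equation $L_{\e,u}v_\s=-f_\s$, where $f_\s$ is a universal polynomial expression in the smooth functions $a^\e_{ij}(\nabla^\e_u u)$ and $\p_{p_l}a^\e_{ij}(\nabla^\e_u u)$, in the derivatives $\nabla^{\e r}_u u$ with $r\le deg(\s)+1$, and in $\p_{2n}\nabla^{\e\, deg(\s)}_u u$. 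One then represents $v_\s\phi$ by the formula of Proposition \ref{krappres} (with $\bar u=u$, $g=-f_\s$, and this $k$), and checks, exactly as in the proofs of Propositions \ref{esistderivL2} and \ref{lineare}, that the resulting kernels $N_1,N_{2,k},N_{2,ks},N_{3,k},N_{4,ks}$ fulfil the hypotheses of Proposition \ref{kerestim} and Remark \ref{osservazione}. For every contribution to $f_\s$ built only from $\nabla^{\e r}_u u$ with $r\le deg(\s)+1$ this is immediate from the inductive hypothesis $u\in C^{m,\beta}_u$ together with the Taylor expansions of Theorem \ref{t301} and the estimate \eqref{tosta} (here the bound on the $L^p$ norm of the second intrinsic derivatives in \eqref{stima1} is automatic, since $deg(\s)+2\le m$). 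Proposition \ref{kerestim} then gives $v_\s\in C^{k,\beta'}_u$ for every $\beta'<\beta$, hence $u\in C^{m+1,\beta'}_u$, with $\e$-independent constants; as $\beta<1$ is arbitrary this closes the induction and yields $\|u\|_{C^{k,\alpha}_u(K_1)}\le C$ for all $k$ and all $\alpha<1$. Uniformity in $\e$ is inherited at every stage from the $\e$-independent estimates of Theorem \ref{fundam} and Propositions \ref{diffgamma} and \ref{kerestim} (the passage to the vanishing viscosity limit is a separate matter, not needed here).

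The step I expect to be the genuine obstacle is the treatment of the term $\p_{2n}\nabla^{\e\, deg(\s)}_u u$ inside $f_\s$: as a graded derivative it has order $deg(\s)+2$, hence is exactly one intrinsic order more singular than the inductive hypothesis directly controls, and placed as is it does not satisfy the decay condition \eqref{N3} of the $N_{3,k}$ kernels. The device to absorb it — already used for the $(\u-P^1_{x_0}\u)\p_{2n}$ terms in Propositions \ref{p402} and \ref{krappres}, and the very reason Remark \ref{osservazione} is recorded — is to write $\p_{2n}=[X^\e_{1,x_0},X^\e_{n,x_0}]$, note that $X^\e_{1,x_0}=X^\e_{1,u}$ and $X^\e_{n,x_0}=X^\e_{n,u}$, and integrate by parts once in the convolution against $\G^\e_{x_0}$, transferring one of these (harmless) frozen derivatives onto the fundamental solution. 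What is left behind is then built from $X^\e_{n,u}\nabla^{\e\, deg(\s)}_u u$ (or $X^\e_{1,u}\nabla^{\e\, deg(\s)}_u u$), a derivative of $u$ of order $deg(\s)+1\le m$, which the inductive hypothesis controls in $C^\beta_u$; combined with the Taylor remainders of the coefficients, it is absorbed into a contribution of the $N_{4,ks}$ type admitted by Remark \ref{osservazione}, now carrying the extra $X^\e_{s,x_0}$ on $\G^\e_{x_0}$. Carrying this commutator integration by parts systematically through every term generated by the iteration lemmas, and verifying that the homogeneity exponents match the $N_3$- and $N_4$-decay conditions, is the technical heart of the argument; the remainder is the formal induction described above.
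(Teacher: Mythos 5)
Your overall architecture (induction on the order of differentiation, the iteration Lemmas \ref{iteras}--\ref{itera2n} to produce linearized equations $L_{\e,u}v_\s=-f_\s$, and the linear a priori estimates via the representation formulas) matches the paper, and you have correctly located the obstruction in the term $\p_{2n}\nabla^{\e\,deg(\s)}_u u$ inside $f_\s$. But the device you propose to absorb it does not close the gap. Writing $\p_{2n}=[X^\e_{1,x_0},X^\e_{n,x_0}]$ and integrating by parts once does lower the order of the derivative of $u$ left behind from $deg(\s)+2$ to $deg(\s)+1$, but it transfers the problem to the kernel conditions: the resulting kernel multiplying $X^\e_{s,x_0}\G^\e_{x_0}$ is (coefficient)$\times X^\e_{n,u}\nabla^{\e\,deg(\s)}_u u$, and to fit Remark \ref{osservazione} it must, after subtracting its Taylor polynomial (the polynomial part going into $N_{2,ks}$), vanish like $d_{\e,x_0}^{\,k-1+\a}(x_0,\x)$. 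By Theorem \ref{t301} this requires $X^\e_{n,u}\nabla^{\e\,deg(\s)}_u u\in C^{k-1,\a}_u$, i.e.\ control of graded derivatives of $u$ up to order $(deg(\s)+1)+(k-1)=m+1$ --- exactly what you are trying to prove. The same count (with $k-2$ in place of $k-1$) defeats the attempt to place the leftover in $N_{3,k}$, and choosing a different $k\in\{2,3,4\}$ only shifts the mismatch, never removes it. The reason the analogous manoeuvre works in Propositions \ref{p402} and \ref{krappres} is that there the $\p_{2n}$ terms carry the factor $(\u-P^{1}_{x_0}\u)$, which already vanishes at $x_0$ to order $1+\a$ (and can be upgraded to $P^{k-1}_{x_0}$); the coefficients multiplying $\p_{2n}\nabla^\e_u z$ in $f_h$ and $f$ (e.g.\ $a_{ij}(\nabla^\e_u u)(X^\e_{h,u}b_i-X^\e_{i,u}b_h)$) do not vanish at $x_0$, so no extra decay is available from that source.

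The paper resolves this differently, and the missing idea is the real content of Lemma \ref{itera2n}: $\p_{2n}z$ is not merely a bad term in a right-hand side, it is itself a solution of a linearized equation $L_{\e,u}(\p_{2n}z)=-f$ with $f$ controlled by the inductive data, so the linear theory (Proposition \ref{lineare}) upgrades $\p_{2n}z$ by two full orders. Accordingly the paper's induction is on $deg(\s)=k$ (components in $\{1,\dots,2n\}$) with the \emph{strengthened} inductive statement
$$\nabla^\e_{\s,u}u\in C^{3,\a}_u,\qquad \p_{2n}\nabla^\e_{\s,u}u\in C^{2,\a}_u ,$$
with uniform bounds. In the inductive step one first applies Lemma \ref{itera2n} and Proposition \ref{lineare} to $v=\p_{2n}z$ to get $\p_{2n}\nabla^{\e\,k}_u u\in C^{3,\a}_u$ (hence $\p_{2n}\nabla^{\e\,(k+1)}_u u\in C^{2,\a}_u$), and only then Lemma \ref{itera1} and Proposition \ref{lineare} to $v_h=X^\e_{h,u}z$; the extra piece of the inductive hypothesis is precisely what makes $f_\s$, $f_h$ and $f$ lie in the H\"older classes required by Proposition \ref{lineare}, so no integration by parts on $\p_{2n}$ is ever needed. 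Your proof needs to be reorganized along these lines; as written, the ``technical heart'' you defer is not a computation to be checked but an estimate that fails.
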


\begin{proof} For every  $k \in N$, for every $\s$ with
$deg(\s)=k,$ and components in $\{1, \ldots, 2n\}$ we prove by
induction that
$$ \nabla^\e_{\s, u}u \in C^{3, \alpha}_u(\Omega),\quad
\p_{2n}\nabla^\e_{\s, u}u \in C^{2, \alpha}_u(\Omega),$$ and that
for every compact set $K_1$ such that $K_1\subset \subset  K$ 
there exists a constant $C>0$ depending only on $C_0, k, \alpha$
such that
$$||\nabla^\e_{\s, u}u||_{C^{3,\alpha}_{u
 }(K_1)} + ||\p_{2n}\nabla^\e_{\s, u}u||_{C^{2,\alpha}_{ u
 }(K_1)} \le C.$$
 The thesis is true for
$deg(\s)=0$ by Theorem \ref{step1}.

We assume by induction that
it is true for
 $deg(\s)=k$.
Call $z = \nabla^\e_{\s, u}u$, then  by Lemma \ref{iteras} the
function $z$ satisfies $L_{\e, u}z = f_\s$ in $\Omega$
 with $f_\s=f_\s(\nabla^{\e (k+1)}_{ u}u, \p_{2n}\nabla^{\e k}_{ u}u
)\in
 C^{2, \alpha}_u(\Omega)$, by inductive assumption.
By Lemma \ref{itera2n}, the function $v=\p_{2n}z$ satisfies
$L_{\e, u}v = f,$ in $\Omega$, where the function $f=f(\nabla^{\e
2}_{u}z, \p_{2n}\nabla^\e_{u}z )\in
 C^{1, \alpha}_u(\Omega)$.

It follows  by Proposition  \ref{lineare} that 
$v=\p_{2n}z \in C^{3, \alpha}_u(\Omega)$, and if $K_2$  is a
compact set such that $K_1\subset \subset K_2\subset \subset K$
there exists a constant $C$ depending only on $C_0, k, \alpha$
such that
$$||v||_{C^{3,\alpha}_{u
 }(K_2)}\le C_1 ||f||_{C^{1,\alpha}_{u
 }(K_2)}= C_2.$$
This argument, applied to any multi-index $\s$ with $deg(\s)=k$,
implies that $\p_{2n}\nabla^{\e k}_{ u}u \in C^{3, \alpha}_u.$
Consequently  $\p_{2n}\nabla^{\e k+1}_{ u}u \in C^{2, \alpha}_u,$
and
$$||\p_{2n}\nabla^{\e k}_{ u}u ||_{C^{2,\alpha}_{u
 }(K_2)}\le C_1,$$
 for an other constant $C_1$, only dependent on  $C_0, k, \alpha$.

 Moreover by Lemma \ref{itera1} the function $v_h=X^\e_{h u}z$ satisfies
$Lv_h = f_h,$ with $f_h=f_h(\nabla^{\e 2}_{u}z,
\p_{2n}\nabla^\e_{u}z )\in
 C^{1, \alpha}_u(\Omega)$. Again  Proposition \ref{lineare}
   implies
that $X^\e_{h u}\nabla^\e_{\s, u}u \in C^{3, \alpha}(\Omega),$ and
that

$$||\nabla^{\e (k+1)}_{u}u||_{C^{3,\alpha}_{u
 }(K_1)}\le C_2,$$
 for a constant $C_2$ dependent on  $C_0, k, \alpha$. This
 concludes  the proof.
\end{proof}

\bigskip

{\bf Proof of Theorem \ref{maintheorem}

\begin{proof}
Let $(u_\e)_\e $ be a smooth  approximating sequence of $u$ such that
$$L_\e u_\e = 0\quad \text{ in }\Omega.$$
Let $K_1$ be an arbitrary compact set in $\Omega$. Then there
exist compact sets $K$ and $K_2$ such that $$K_1\subset\subset K_2
\subset \subset K.$$ By assumption there exists a positive
constant $C_0$ such that
$$||\nabla_{E}u_\e||_{L^\infty(K)}\leq C_0,$$
for every $\e$. By proposition \ref{Calphaestimate}  
there exists a constant $C_1$ such that
$$||\nabla^\e_{u_\e}u_\e||_{C^{1, \alpha}_{ u_\e}(K_2)}+
||\p_{2n}u_\e||_{L^{\infty}(K_2)}+||\nabla^{\e
2}_{u_\e}u_\e||_{L^2(K_2)}\leq C_1.$$
Then by Theorem \ref{ultimo}
for every  $k$ there is $C_k$ such that
$$||u_\e||_{C^{2k, \alpha}_{ u_\e}(K_1)}\leq C_k.$$
In particular,
$$||u_\e||_{C^{k}_{ E}(K_1)}\leq C_k.$$
 Since all the constants are independent of $\e$,
letting $\e$ go to  $0$ we obtain estimates of  $u$ in $C^{k}_E$
for every $k$. Consequently $u\in C^{\infty}_E.$
\end{proof}
%////////////////////////////////////////////////////////////////////

\end{document}